\newtheorem{thm}{Theorem}[section]
\newtheorem*{thm*}{Theorem}
\newtheorem{cor}[thm]{Corollary}
\newtheorem{lem}[thm]{Lemma}
\newtheorem{prop}[thm]{Proposition}
\theoremstyle{definition}
\newtheorem{defn}[thm]{Definition}
\theoremstyle{remark}
\newtheorem*{rem}{Remark}
\DeclareSymbolFont{bbold}{U}{bbold}{m}{n}
\DeclareSymbolFontAlphabet{\mathbbold}{bbold}
\newcommand{\T}[1]{\bar{#1}}
\newcommand{\rZ}{\mathcal{Z}}
\newcommand{\bdim}{\mathrm{L}}
\newcommand{\N}{\mathbb{N}}
\newcommand{\Z}{\mathbb{Z}}
\newcommand{\R}{\mathbb{R}}
\newcommand{\cH}{\mathcal{H}}
\newcommand{\bOne}{\mathbbold{1}}
\newcommand{\acts}{\curvearrowright}
\newcommand{\Stab}{\mathrm{Stab}}
\newcommand{\Sub}{\mathrm{Sub}}
\newcommand{\dom}{\mathrm{dom}}
\newcommand{\Rect}{\mathrm{Rec}}
\renewcommand{\:}{\,:\,}
\newcommand{\res}{\restriction}
\def\Ddots{\mathinner{\mkern1mu\raise\p@
\vbox{\kern7\p@\hbox{.}}\mkern2mu
\raise4\p@\hbox{.}\mkern2mu\raise7\p@\hbox{.}\mkern1mu}}
\begin{document}

\title[Locally nilpotent groups and hyperfinite equivalence relations]{Locally nilpotent groups and hyperfinite \\ equivalence relations}

\author{Scott Schneider and Brandon Seward}
\address{Department of Mathematics, University of Michigan, 530 Church Street, Ann Arbor, MI 48109, U.S.A.}
\email{sschnei@umich.edu, b.m.seward@gmail.com}
\keywords{hyperfinite, nilpotent, orbit equivalence relation}
%\subjclass{}
\thanks{The second author was supported by the National Science Foundation Graduate Student Research Fellowship under Grant No. DGE 0718128.}

\begin{abstract}
A long standing open problem in the theory of hyperfinite equivalence relations asks if the orbit equivalence relation generated by a Borel action of a countable amenable group is hyperfinite. In this paper we show that this question has a positive answer when the acting group is locally nilpotent. This extends previous results obtained by Gao--Jackson for abelian groups and by Jackson--Kechris--Louveau for finitely generated nilpotent-by-finite groups. Our proof is based on a mixture of coarse geometric properties of locally nilpotent groups together with an adaptation of the Gao--Jackson machinery.
\end{abstract}
\maketitle

%\footnote{The second author was supported by the National Science Foundation Graduate Student Research Fellowship under Grant No. DGE 0718128.}

\section{Introduction} \label{SEC INTRO}

This paper is a contribution to the project of determining which countable groups have the property that all of their orbit equivalence relations are hyperfinite. An equivalence relation $E$ on the Polish space $X$ is \emph{Borel} if it is a Borel subset of $X\times X$, and \emph{finite} if each $E$-class is finite. $E$ is \emph{hyperfinite} if it is the union of an increasing sequence of finite Borel equivalence relations. It is well-known (cf.\ \cite{Weiss84}, \cite{SS88}, \cite{DJK}) that the Borel equivalence relation $E$ on $X$ is hyperfinite if and only if $E$ arises as the orbit equivalence relation of some Borel action of $\Z$ on $X$.

In the measure-theoretic context, Ornstein and Weiss \cite{OW80} proved that if the countable amenable group $G$ acts in a Borel fashion on the standard Borel space $X$, then the resulting orbit equivalence relation $E^X_G$ is \emph{$\mu$-a.e.\ hyperfinite} for any Borel probability measure $\mu$ on $X$. This just means that for any such measure $\mu$ there is an invariant $\mu$-conull subset $Y\subseteq X$ such that the restriction of $E^X_G$ to $Y$ is hyperfinite. Taking this result as a guide, Weiss \cite{Weiss84} asked whether the orbit equivalence relation arising from a Borel action of a countable amenable group is hyperfinite in the purely Borel setting. He proved that this is the case for Borel actions of $\Z$ \cite{Weiss84}, and since then significant attention has been focused on the project of extending this result to the widest possible class of groups. By a result from the folklore of the field that was first recorded in \cite{JKL}, if every orbit equivalence relation arising from a Borel action of $G$ is hyperfinite, then $G$ must be amenable.

In unpublished work shortly after \cite{Weiss84}, Weiss generalized his result on $\Z$-actions to show that Borel actions of the groups $\Z^n$ give rise to hyperfinite orbit equivalence relations. In 1988, Slaman and Steel \cite{SS88} independently proved Weiss's result for $\Z$-actions by constructing \emph{Borel marker sets}, thus introducing a technique that would go on to play a central role in hyperfiniteness arguments. Then in 2002, Jackson, Kechris, and Louveau generalized all previous results on the problem by proving that finitely-generated groups of polynomial growth have hyperfinite orbit equivalence relations \cite{JKL}.  By Gromov's Theorem \cite{G}, these are exactly the finitely-generated nilpotent-by-finite groups. They also showed that orbit equivalence relations arising from Borel actions of a certain class of uncountable locally compact Polish groups are Borel reducible to hyperfinite equivalence relations (see \cite[1.16]{JKL}).

In an important breakthrough in 2007, Gao and Jackson \cite{GJ} proved that orbit equivalence relations arising from Borel actions of countable abelian groups are hyperfinite, thus eliminating for the first time the hypothesis of finite generation. Their proof extended the use of Borel marker sets initiated by Slaman-Steel, made essential use of the geometry of $\Z^n$, introduced the notion of \emph{anti-coherent}, or \emph{orthogonal} pairs of equivalence relations, and involved an intricate multi-scale inductive construction. Since every countable group is the increasing union of finitely generated subgroups, their result may be interpreted as solving a special case of the widely known \emph{union problem} which asks whether the union of an increasing sequence of hyperfinite equivalence relations is again hyperfinite.

In this paper we will show how to extend the techniques of \cite{GJ} in order to remove the assumption of finite generation for the class of nilpotent groups. In particular we show how finitely-generated nilpotent groups can be viewed locally as having ``roughly-rectangular" geometry similar to that of finitely-generated abelian groups, and we transfer the theory of orthogonal equivalence relations developed in \cite{GJ} to the new setting. Since all our constructions are local in nature, we actually obtain hyperfiniteness of orbit equivalence relations arising from Borel actions of countable locally nilpotent groups. A group is \emph{locally nilpotent} if every finitely generated subgroup is nilpotent.

\begin{thm}
If the countable locally nilpotent group $G$ acts in a Borel fashion on the standard Borel space $X$, then the induced orbit equivalence relation is hyperfinite.
\end{thm}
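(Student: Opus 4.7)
The plan is to build an exhausting sequence of finite Borel equivalence relations for $E^X_G$ directly, rather than trying to settle the general union problem for hyperfinite relations. Write $G = \bigcup_n G_n$ as an increasing union of finitely generated subgroups; since $G$ is locally nilpotent each $G_n$ is finitely generated nilpotent, hence (by JKL together with Gromov's theorem) already gives a hyperfinite orbit relation $E^X_{G_n}$. What is needed is a single Borel sequence $F_0 \subseteq F_1 \subseteq \cdots$ of finite equivalence relations on $X$ with $\bigcup_n F_n = E^X_G$. Each $F_n$ will be built as the ``same-tile'' relation for a Borel partition of $X$ into tiles of $G_n$-orbit diameter at most some chosen scale $R_n$, with $R_n \to \infty$.

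For each $G_n$ I would fix a Mal'cev basis $(g_1,\dots,g_d)$, giving global coordinates in which group elements are written as $g_1^{a_1}\cdots g_d^{a_d}$. The coarse-geometric input I would develop is that, with respect to a word metric for a generating set $S_n$, balls $B_{S_n}(r)$ are not themselves rectangles in these coordinates, but they admit a bi-Lipschitz comparison, with constants depending only on $G_n$, to ``stretched boxes'' whose side lengths along the $i$-th Mal'cev direction scale as $r^{w_i}$ with weights $w_i$ coming from the lower central series (Bass--Guivarc'h). This is the precise meaning I would give to the roughly rectangular local geometry mentioned in the introduction, and it is what lets the Gao--Jackson constructions, which are fundamentally about tilings of $\Z^d$ by axis-aligned boxes, be replayed inside each $G_n$.

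With that geometric framework in place, the main construction is an inductive multi-scale production of Borel marker sets $M_n \subseteq X$ together with associated tilings and equivalence relations $F_n$. At stage $n$ I would choose $R_n$ enormous compared to previous stages, produce $M_n$ that is $R_n$-separated and $R_n'$-dense along $G_n$-orbits in the sense coming from the roughly-rectangular boxes, and use $M_n$ to define $F_n$. The subtle step is arranging $F_n \supseteq F_{n-1}$: here I would adapt the Gao--Jackson notion of orthogonal (or anti-coherent) equivalence relations, forcing the boundaries of stage-$n$ tiles to meet the boundaries of stage-$(n-1)$ tiles only in a set small enough to be absorbed by the margin between $R_n$ and $R_{n-1}$. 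Finiteness of each $F_n$ then follows from polynomial volume growth of $G_n$, and $\bigcup_n F_n = E^X_G$ follows from $R_n \to \infty$ together with $G = \bigcup_n G_n$.

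The hard part I expect is not any single ingredient in isolation but the coherence of this multi-scale induction as the ambient group changes. In the abelian Gao--Jackson argument the ambient group and coordinate system are fixed, so orthogonality is a clean statement about axis-aligned boxes of a single lattice. Here the Mal'cev basis, the weights $w_i$, and even the dimension $d$ change with $n$, and the rough-rectangularity constants can deteriorate when passing from $G_n$ to $G_{n+1}$. The technical heart of the argument will be to show that the scales $R_n$ can be selected rapidly enough, relative both to volume growth and to the Mal'cev distortion between successive $G_n$, that the orthogonal marker construction closes up at each inductive step; once this is achieved, the theorem follows from the three properties above of the sequence $(F_n)$.
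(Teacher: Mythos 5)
Your outline matches the free-action case reasonably well at a high level: exhaust $G$ by finitely generated nilpotent subgroups $G_n$, develop a rough-rectangular coarse geometry for each $G_n$, and run a multi-scale marker construction using a Gao--Jackson-style orthogonality condition to control interference between scales. Your geometric vocabulary (Mal'cev coordinates, Bass--Guivarc'h weights, boxes stretched along the lower central series) is one natural way to codify the polynomial geometry, essentially the \cite{JKL} approach; the paper instead defines a \emph{chart}, an injective map $\phi$ from a rectangle in $\Z^\ell\times\Gamma$ into $G$ that is an almost-homomorphism into a coset space $G/H$ with controlled error $\rZ$. The chart formalism deliberately decouples ``rectangle'' from ``metric ball'' (the paper even notes that metric rough-cubes likely do not exist in nilpotent, non-virtually-abelian groups), and more importantly it carries with it a finite family $\cH$ of conjugate subgroups, which is exactly what makes the non-free case tractable. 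A smaller remark: you ask for a literal nested chain $F_0\subseteq F_1\subseteq\cdots$; the paper only produces a sequence $(F_n)$ with $x\mathrel{E^X_G}y$ iff $x\mathrel{F_n}y$ for all sufficiently large $n$, which is weaker and much easier to arrange inductively, and then nestedness is recovered afterward by taking $E_m=\bigcap_{n\ge m}F_n$.

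The genuine gap is the non-free case. Nothing in your outline addresses stabilizers, and for locally nilpotent groups that are not finitely generated this is not an oversight you can patch locally. For a general Borel action, $\Stab(x)\cap G_n$ is a nontrivial subgroup that conjugates as you move along the $G_n$-orbit of $x$, so the rough-rectangular geometry you want to see ``at $x$'' depends on $\Stab(x)\cap G_n$ and cannot be set up coherently across an orbit without a Borel choice of representative stabilizer. For abelian $G$ this issue is invisible (conjugate subgroups are equal), and for finitely generated nilpotent $G$ the subgroup space $\Sub(G)$ is countable so conjugacy is smooth and one reduces to free actions per conjugacy class of stabilizer (Proposition~\ref{LEM STABRED}). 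But the paper shows (Proposition~\ref{THM NONSMOOTH}) that there is a countable nilpotent group of class $2$ whose subgroup conjugacy relation is already non-smooth, so in the locally nilpotent setting there is \emph{no} Borel choice of distinguished stabilizer per orbit, hence no Borel choice of Mal'cev frame per orbit in your setup. The paper handles this by packaging the finite family $\cH$ of conjugate subgroups into each chart and upgrading the two-dimensional Gao--Jackson inductive array to a three-dimensional one (Theorem~\ref{THM NFREE}) that diagonalizes simultaneously over the $G_k$'s and over possible stabilizers inside each $G_k$. Your proposal is missing both ingredients, so as written it can at best prove Theorem~\ref{THM FREE}.
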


Proving this theorem for non-free actions requires addressing a new difficulty that is not present for abelian groups or finitely-generated nilpotent-by-finite groups. Namely, the conjugacy relation on the space of subgroups of a countable nilpotent group is in general non-smooth. This means that if a Borel action of a nilpotent group has non-trivial stabilizers, one cannot necessarily choose a distinguished stabilizer from each orbit in a Borel manner. Overcoming this problem leads to significant structural differences between our proof for nilpotent groups and the Gao--Jackson proof for abelian groups when the action is not free.

The question of whether orbit equivalence relations of countable amenable groups are necessarily hyperfinite has received attention not only in the measure and Borel settings, but also in the topological setting. Here it takes the form of asking whether for every minimal continuous action of an amenable group on a Cantor space there is a continuous action of $\Z$ with the same orbits. This question remains open, and currently the best positive result was obtained for actions of $\Z^n$ by Giordano, Matui, Putnam, and Skau \cite{GMPS}. Motivated by this perspective, we attempt to obtain our hyperfiniteness result in a topologically favorable way, although we are unable to supply a positive answer for nilpotent groups to the topological version of Weiss's question. The following theorem extends a similar result by Gao and Jackson \cite{GJ}. Here $E_0$ is the equivalence relation of eventual agreement of binary sequences.

\begin{thm}
If the countable locally nilpotent group $G$ acts freely and continuously on the zero-dimensional Polish space $X$, then the induced orbit equivalence relation is continuously embeddable into $E_0$.
\end{thm}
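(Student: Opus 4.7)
The plan is to strengthen the proof of Theorem~1.1 by working with clopen rather than merely Borel marker sets throughout, and then to package the resulting tower of finite clopen equivalence relations into a continuous reduction to $E_0$. Since $G$ is countable and locally nilpotent, I would fix an exhausting chain $G_0 \subseteq G_1 \subseteq \cdots$ of finitely generated (hence nilpotent) subgroups with $\bigcup_n G_n = G$, together with an enumeration $G = \{g_0, g_1, \dots\}$. The freeness hypothesis considerably simplifies matters because every point has trivial stabilizer, so the non-smooth stabilizer phenomenon flagged in the introduction does not arise.

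The first main step is to establish the theorem for a single finitely generated nilpotent group $H$ acting freely and continuously on a zero-dimensional Polish space. Fixing a word metric on $H$, the crucial input from the proof of Theorem~1.1 is that each $H$-orbit admits a sequence of roughly rectangular marker regions at scales $r_k \to \infty$. In the zero-dimensional continuous setting, one wants to produce these regions from clopen marker sets $M_k \subseteq X$, so that the associated region partitions $\mathcal{R}_k$ have clopen classes and the finite equivalence relations $F_k$ induced by them are themselves clopen, increasing, with union $E_G^X$. This upgrades the Borel marker machinery and extends the Gao--Jackson clopen-marker theorem from $\Z^n$ to finitely generated nilpotent groups. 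The orthogonality/anti-coherence of consecutive scales from \cite{GJ}, adapted to the nilpotent Cayley-graph setting as for Theorem~1.1, is used precisely to keep the refinements clopen.

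Next I would extend to the locally nilpotent case by a layered construction. At stage $n$, run the clopen marker construction for $G_n$ at a scale $r_n$ chosen large enough to (i) engulf the previous stage's regions and (ii) ensure that all translates $g_i \cdot x$ for $i \le n$ lie in the same $\mathcal{R}_n$-region as $x$. This yields an increasing tower $F_0 \subseteq F_1 \subseteq \cdots$ of finite clopen equivalence relations whose union is $E_G^X$. Because each $F_n$ is clopen with finite classes, one can continuously code each $F_n$-class by a finite binary string in a locally constant manner and concatenate these into a single element of $2^\N$. Standard bookkeeping then produces a continuous map $X \to 2^\N$ that is an injective reduction, and hence an embedding, of $E_G^X$ into $E_0$.

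The main obstacle will be the transition from stage $n$ to stage $n+1$: the refinement must be simultaneously clopen, strictly refining on those $E_G^X$-classes where $G_{n+1}$ properly extends $G_n$, and arranged so that the tail coding produced at stage $n$ is preserved on all but a bounded prefix at stage $n+1$. This is exactly where the transplanted anti-coherence from $\Z^n$ to the roughly-rectangular local geometry of $G_n$ enters, and it is the delicate point that cannot simply be imported from \cite{GJ} verbatim. Once continuous refinement is secured across all scales, the continuous embedding into $E_0$ follows by the standard coding construction.
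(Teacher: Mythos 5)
Your proposal claims to construct an \emph{increasing} tower $F_0 \subseteq F_1 \subseteq \cdots$ of finite clopen equivalence relations with union $E^X_G$, and then to code this nested tower into $E_0$. That is not what the paper does, and I do not see how to make it work. The paper builds a sequence $E_{1,1}, E_{1,2}, E_{1,3}, \ldots$ of finite $G$-clopen equivalence relations (note: $G$-clopen in the sense of Definition~\ref{DEFN GCLOPEN}, not clopen in $X \times X$) that are \emph{not} nested. What is arranged instead is that they are pairwise $(\Phi_k, q_k \cdot A_k)$-orthogonal along each row $k$, and Lemma~\ref{LEM ORTHOSEQ} uses orthogonality to give the ``eventual agreement'' property: $x \mathrel{E^X_G} y$ implies $x \mathrel{E_{1,n}} y$ for all but finitely many $n$. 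The continuous embedding into $E_0$ then comes from Lemma~\ref{LEM E0}, which is specifically designed for such non-nested sequences. Your sentence that orthogonality ``is used precisely to keep the refinements clopen'' misstates its role: orthogonality bounds, by the Hirsch length $\ell_k$, the number of scales $n$ at which a given point can lie near a facial boundary of $E_{k,n}$, and it is this bound that substitutes for the nesting you are assuming.

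The nested-tower route breaks down at exactly the transition you yourself flag. Passing from $G_n$ to $G_{n+1}$, you cannot in a Borel (let alone clopen) manner choose the scale-$(n+1)$ marker regions so that each one is a union of scale-$n$ regions on every orbit: the two marker constructions are essentially independent, so scale-$(n+1)$ boundaries will cut through scale-$n$ regions somewhere. (This is closely related to why the union problem is open.) Your condition~(ii), that $g_0 \cdot x, \ldots, g_n \cdot x$ all lie in the same scale-$n$ region as $x$, is likewise not achievable for every $x$, since the regions are finite and some $x$ is always near a boundary. You also describe $F_{n+1}$ as both a member of an increasing chain and a ``strict refinement'' of $F_n$, which is contradictory and suggests the nesting picture was not fully thought through. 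Without the orthogonality machinery, carried over to nilpotent groups via charts (Definition~\ref{DEFN CHART}) and the main construction Lemma~\ref{LEM MAIN}, the ``standard bookkeeping'' in your last paragraph has no tail-stabilization to code: the finite relations you produce simply will not agree on cofinitely many coordinates without orthogonality to enforce it.
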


The rest of this paper is organized as follows.  In Section \ref{SEC PRELIM} we establish notation and collect together some basic facts about Borel equivalence relations, group actions, and locally nilpotent groups.  In Section \ref{SEC MARKER} we prove a Borel marker lemma similar to \cite[Lemma 2.1]{GJ}, and we discuss the notion of a \emph{$G$-clopen} relation and its connection to continuous reductions. In Section \ref{SEC GEOM} we discuss certain aspects of the geometry of abelian groups and introduce a device called a \emph{chart} that will enable us to transfer this geometry to nilpotent groups. In Sections \ref{SEC RECT} and \ref{SEC FACES} we carry out this transfer, introducing the notions of \emph{rough rectangle} and \emph{facial boundary} in groups admitting charts. In Section \ref{SEC ORTHO} we define \emph{orthogonal equivalence relations} and prove the main technical lemma of the paper which is used for building them. Section \ref{SEC STAB} is devoted to the conjugacy equivalence relation on subgroups and the new difficulty presented by non-free actions of general (i.e., non-finitely-generated) nilpotent groups.  Finally, in Section \ref{SEC FINAL} we prove our main results.

\section{Preliminaries} \label{SEC PRELIM}

\subsection{Descriptive set theory and equivalence relations}

A \emph{Polish space} is a separable, completely metrizable topological space. Familiar examples of Polish spaces include $\R$, any countable discrete space, Cantor space $2^\N$ of binary sequences, and Baire space $\N^\N$ of sequences of natural numbers. Cantor space and Baire space are \emph{zero-dimensional} Polish spaces, meaning that they admit a base of clopen sets.

An equivalence relation $E$ on the Polish space $X$ is \emph{Borel} (\emph{closed}, $F_\sigma$, \emph{etc}) if it is Borel (closed, $F_\sigma$, etc) as a subset of $X\times X$. $E$ is \emph{countable} if every equivalence class is countable, and \emph{finite} if every class is finite. If $E$ and $F$ are Borel equivalence relations on the standard Borel spaces $X$ and $Y$, a \emph{reduction} from $E$ to $F$ is a function $f:X\to Y$ such that for all $x,y\in X$,
$$x\mathrel{E}y \ \Leftrightarrow \ f(x)\mathrel{F}f(y).$$
We say that $E$ is \emph{Borel reducible} to $F$, written $E\leq_BF$, if there is a Borel reduction from $E$ to $F$, and that $E$ is \emph{Borel embeddable} in $F$, written $E\sqsubseteq_BF$, if there is an injective Borel reduction (i.e., an \emph{embedding}) from $E$ to $F$. The notions of \emph{continuously reducible} and \emph{continuously embeddable} are defined analogously.

Sometimes we will want to ignore topological considerations and focus solely on the Borel setting. A \emph{standard Borel space} is a measurable space $(X,\mathcal{B})$ such that $\mathcal{B}$ arises as the Borel $\sigma$-algebra of some Polish topology on $X$. The notions of Borel equivalence relation and Borel reduction can then be defined just as above in this more general setting. By a classical result of Kuratowski, any two uncountable standard Borel spaces are isomorphic, which has the effect that in the Borel setting we may always work on whatever space is most convenient.

We shall be especially concerned with equivalence relations that arise from countable group actions. Throughout this paper $G$ will always denote a countable group, and we will always view countable groups as discrete topological groups. An action $\alpha:G\times X\to X$ of $G$ on the Polish space $X$ is \emph{continuous} (\emph{Borel}) if the function $\alpha$ is continuous (Borel) on the product space $G\times X$. Since $G$ is countable, this is equivalent to the functions $x\mapsto g\cdot x$ being continuous (Borel) for every $g\in G$.  We will frequently avoid naming actions, writing $G\acts X$ for an action of $G$ on $X$ and $g\cdot x$ for the image of $(g,x)$ when no confusion can arise. All our actions will be on the left. If $H\leq G$ then we let $H$ act on $X$ by restricting the $G$-action.

There is an intimate connection between countable group actions and countable Borel equivalence relations. If the countable group $G$ acts in a Borel fashion on the Polish space $X$, then the resulting \emph{orbit equivalence relation} $E^X_G$ defined by
$$x\mathrel{E^X_G}y \ \Leftrightarrow \ (\exists g\in G)\, g\cdot x=y$$
is a countable Borel equivalence relation. If moreover $G$ acts continuously, then in fact $E^X_G$ is $F_\sigma$. Conversely, by the well-known Feldman-Moore theorem \cite{FM}, for any countable Borel equivalence relation $E$ on the standard Borel space $X$ there is a countable group $G$ and a Borel action $G\acts X$ such that $E=E^X_G$. In this sense the study of countable Borel equivalence relations amounts to the study of orbit equivalence relations of countable groups.

Given an action of $G$ on $X$, the $E^X_G$-equivalence class of $x$ is called the \emph{orbit} of $x$ and is equal to $G\cdot x=\{g\cdot x\:g\in G\}$. In general if $E$ is any equivalence relation on $X$, we write $[x]_E$ for the $E$-equivalence class of $x\in X$. An action is \emph{free} if $g\cdot x\ne x$ for all $x\in X$ and $1_G\ne g\in G$, where here and below we write $1_G$ for the identity element of $G$. Equivalently $G\acts X$ is free if all stabilizers $\Stab(x)$ are trivial, where
$$\Stab(x):=\{g\in G\:g\cdot x=x\}$$
is the \emph{stabilizer} of $x$ in $G$. If $g\cdot x=y$ then
$$\Stab(y)=g\cdot\Stab(x)\cdot g^{-1},$$
so that orbit equivalent elements have conjugate stabilizers.

Countable Borel equivalence relations have been the focus of intensive study over the past twenty-five years, and significant progress has been made in understanding their structure even though a number of fundamental problems remain unsolved. The relation $\leq_B$ of Borel reducibility defines a partial pre-order on the class of Borel equivalence relations that is often interpreted as a complexity ordering. With respect to this ordering, the simplest countable Borel equivalence relations are the \emph{smooth} ones, i.e., those that Borel reduce to the equality relation on some standard Borel space, or equivalently those admitting a Borel selector. Here a \emph{selector} for the equivalence relation $E$ on $X$ is a function $\sigma:X\to X$ whose graph is contained in $E$ and whose image is a \emph{transversal} for $E$, that is, a subset of $X$ meeting each $E$-class in exactly one point. Every finite Borel equivalence relation is smooth, and in general the structure of smooth countable Borel equivalence relations is rather trivial to understand.

The next simplest countable Borel equivalence relations are the hyperfinite ones. An equivalence relation $E$ is said to be \emph{hyperfinite} if it can be expressed as the union of an increasing sequence of finite Borel equivalence relations. Examples of hyperfinite equivalence relations include the orbit equivalence relation arising from the shift action of $\Z$ on $2^\Z$, the Vitali equivalence relation $E_v$ defined on $\R$ by $x\mathrel{E_v}y\Leftrightarrow x-y\in\mathbb Q$, and the combinatorial counterpart $E_0$ of the Vitali relation defined on Cantor space $2^\N$ by
$$x\mathrel{E_0}y \ \Leftrightarrow \ (\exists n)(\forall k\geq n)\, x(k)=y(k).$$
Every smooth countable Borel equivalence relation is hyperfinite, but each of the examples just given is non-smooth. By a deep result due to Harrington--Kechris--Louveau \cite{HKL} and generalizing earlier work of Glimm--Effros, if $E$ is any (not necessarily countable) non-smooth Borel equivalence relation, then $E_0\sqsubseteq_B E$. Hence $E_0$ is an immediate successor of the trivial smooth equivalence relations in the $\leq_B$ hierarchy.

The hyperfinite Borel equivalence relations were thoroughly investigated and completely classified up to Borel bireducibility and isomorphism by Dougherty, Jackson, and Kechris in \cite{DJK}. Here $E$ and $F$ are \emph{Borel bireducible} if $E\leq_BF$ and $F\leq_BE$, and \emph{isomorphic} if there is a bijective Borel reduction from $E$ to $F$. Dougherty, Jackson, and Kechris showed that any two non-smooth hyperfinite Borel equivalence relations are Borel bireducible (in fact bi-embeddable) with each other, and that two hyperfinite Borel equivalence relations are isomorphic if and only if they admit the same number of invariant ergodic Borel probability measures. They also proved that the class of hyperfinite Borel equivalence relations is closed under Borel reductions, sub-equivalence relations, restrictions to Borel sets, finite products, finite extensions, and countable disjoint unions. For more on hyperfinite equivalence relations see \cite{DJK}.

\subsection{Locally nilpotent groups}

Let $G$ be a countable group. The \emph{center} of $G$ is the subgroup
$$\zeta(G):=\{h\in G\: gh=hg\mbox{ for all $g\in G$}\}.$$
A subgroup $H\leq G$ is \emph{central} if $H\leq\zeta(G)$. $G$ is said to be \emph{nilpotent} if $G$ admits a \emph{central series}, i.e., a sequence
$$\{1_G\} = G_0 \lhd G_1 \lhd \cdots \lhd G_n = G$$
of subgroups, necessarily normal in $G$, such that $G_{i+1}/G_i\leq\zeta(G/G_i)$ for each $0\leq i<n$. The minimal length of a central series of a nilpotent group $G$ is called the \emph{nilpotency class} of $G$.

The \emph{upper central series} of the nilpotent group $G$, written
$$\{1_G\} = \zeta_0G \lhd \zeta_1G \lhd \cdots \lhd \zeta_nG=G,$$
is defined inductively so that $\zeta_{i+1}G$ is the pullback of $\zeta(G/\zeta_iG)$ under the canonical surjection $G\to G/\zeta_iG$. Each term $\zeta_iG$ is a characteristic subgroup of $G$, and $\zeta_1G=\zeta(G)$ is just the center of $G$. If $\{1_G\}=G_0\lhd G_1\lhd\cdots\lhd G_m=G$ is any central series in $G$, then $G_i\leq\zeta_iG$ for each $0\leq i\leq m$, so that in particular $\zeta_mG=G$. Therefore the upper central series of a nilpotent group is a central series of minimal length in $G$; in particular its length, i.e., the least $n$ such that $\zeta_nG=G$, is the nilpotency class of $G$. If $G$ is a nilpotent group of class $n$, then $G/\zeta(G)$ is a nilpotent group of class $n-1$, and therefore it is possible to prove facts about nilpotent groups by induction on nilpotency class.

A series $(G_i)$ in $G$ is \emph{cyclic} if each factor $G_{i+1}/G_i$ is cyclic. Finitely generated nilpotent groups admit central series whose factors are cyclic with prime or infinite orders (\cite[5.2.18]{R}). Moreover, the number of infinite factors in such a series is independent of the series (\cite[5.4.13]{R}). If $G$ is a finitely generated nilpotent group, the number of infinite factors in any cyclic series of $G$ is called the \emph{Hirsch length} of $G$. Hirsch length can also be understood more concretely as follows. Suppose $G$ is a finitely generated nilpotent group. Since subgroups of finitely generated nilpotent groups are finitely generated (\cite[5.2.17]{R}), each term $\zeta_iG$ of the upper central series of $G$ is itself finitely generated. Hence each factor $\zeta_{i+1}G/\zeta_iG$ is a finitely generated abelian group isomorphic to some $\Z^{m_i}\times\Gamma_i$ where $\Gamma_i$ is finite abelian and $m_i$ is the \emph{rank} of $\zeta_{i+1}G/\zeta_iG$. The Hirsch length of $G$ is then just the sum $\sum m_i$ of the ranks of the abelian factors $\zeta_{i+1}G/\zeta_iG$ of the upper central series of $G$.

The group $G$ is \emph{locally nilpotent} if all of its finitely generated subgroups are nilpotent, or equivalently if $G$ is the union of an increasing sequence of nilpotent groups. An easy way to produce a countable locally nilpotent group that is not nilpotent is to take the direct sum of a countably infinite family of nilpotent groups with unbounded nilpotency classes. Such a group will be \emph{hypercentral}, meaning that it admits a transfinite central series which exhausts the group (see \cite[12.1]{R} for details). Perhaps somewhat surprisingly, there exist locally nilpotent groups that are not hypercentral. Indeed, the class of locally nilpotent groups is large and varied. It is closed under subgroups and images and includes in addition to nilpotent groups all the solvable $p$-groups, all hypercentral groups, all groups that satisfy the normalizer condition, and all Fitting, Baer, and Gruenberg groups (see Sections 12.1 and 12.2 of \cite{R} for basic facts about these classes of groups). As an example of how far removed a locally nilpotent group can be from the class of nilpotent groups, we remark that there exists a countable locally nilpotent group $G$ such that $G$ has trivial center, no nontrivial abelian quotients, no subgroups of finite index, and no finitely generated normal subgroups (\cite[12.1.9]{R}). On the other hand, every countable locally nilpotent group is of course amenable.

\section{Marker sets and $G$-clopen relations} \label{SEC MARKER}

Marker sets have been a common ingredient in hyperfiniteness proofs ever since \cite{SS88}. Roughly speaking, they can be used to convert local constructions on a single orbit into global ones that apply in a uniform Borel manner across the entire space. The lemma below is a slight generalization of the ``Basic Clopen Marker Lemma'' in \cite{GJ}. In \cite{GJ} this lemma is presented in the special case $G = \Z^n$ and $Z = X$. The extension presented here is rather straightforward, but as a convenience to the reader we include a proof. We call the set $Y$ constructed in the lemma a \emph{marker set}.

\begin{lem} \label{LEM MARKER}
Let $G \acts X$ be a Borel action of $G$ on the Polish space $X$. Let $1_G \in F \subseteq G$ be finite and symmetric, and let $Z \subseteq X$ be any Borel subset of $X$. Assume that $F \cap \Stab(z) = \{1_G\}$ for every $z \in Z$. Then there exists a Borel set $Y \subseteq Z$ such that
\begin{enumerate}
 \item[\rm (i)] if $y, y' \in Y$ are distinct, then there is no $g \in F$ for which $g \cdot y = y'$; and
 \item[\rm (ii)] for every $z \in Z$ there exists $y \in Y$ and $g \in F$ such that $g \cdot y = z$.
\end{enumerate}
Furthermore, if $X$ is zero-dimensional, $Z$ is clopen, and the action of $G$ is continuous, then $Y$ can be chosen to be clopen as well.
\end{lem}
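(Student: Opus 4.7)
The plan is to cover $Z$ by countably many Borel (respectively clopen, in the zero-dimensional setting) pieces, each of which is internally $F$-separated in the sense that no two distinct points of the piece are $F$-related, and then greedily extract $Y$ one piece at a time. The hypothesis $F \cap \Stab(z) = \{1_G\}$ is precisely what enables such a cover: it guarantees that every $z \in Z$ is distinct from each of its finitely many $F$-translates under $F \setminus \{1_G\}$, so those translates can be separated from $z$ using a countable Borel or clopen base of $X$.

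Concretely, for each $z \in Z$, Hausdorffness yields a Borel (clopen) set $U_z \ni z$ with $g \cdot U_z \cap U_z = \emptyset$ for every $g \in F \setminus \{1_G\}$, obtained by intersecting finitely many basic neighborhoods separating $z$ from each $g \cdot z$. By second countability, countably many such sets $U_{z_0}, U_{z_1}, \ldots$ already cover $Z$. Refining to a partition, I would set $B_n = (U_{z_n} \cap Z) \setminus \bigcup_{k<n} B_k$, so that $\{B_n\}$ is a Borel (clopen) partition of $Z$ in which no two distinct points of any single $B_n$ are $F$-related.

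I would then build $Y$ greedily by $Y_0 = B_0$ and
\[
Y_{n+1} = B_{n+1} \setminus F \cdot (Y_0 \cup \cdots \cup Y_n),
\]
and take $Y = \bigsqcup_n Y_n$. Property (i) follows because two distinct points of $Y$ either lie in a common $B_n$ (hence cannot be $F$-related) or lie in distinct $Y_m$ and $Y_n$ with $m < n$ (the latter excluded by the explicit subtraction). Property (ii) follows because each $z \in Z$ lies in some $B_n$: either $z \in Y_n$, or $z \in F \cdot (Y_0 \cup \cdots \cup Y_{n-1})$, and symmetry of $F$ then yields $y \in Y$ and $g \in F$ with $g \cdot y = z$. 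Borelness of $Y$ is immediate.

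The main obstacle is the topological refinement: arguing that $Y$ is clopen, not merely open. Since the action is continuous, each translation $x \mapsto g \cdot x$ is a homeomorphism, so $F \cdot (Y_0 \cup \cdots \cup Y_n)$ is a finite union of clopen sets and hence clopen; this makes each $Y_n$ clopen by induction, so $Y$ is open as a union of clopens. For closedness, the partition structure of $\{B_n\}$ is critical: given $x_i \to x$ with $x_i \in Y$, closedness of $Z$ places $x \in Z$, so $x$ lies in exactly one $B_n$; openness of that $B_n$ places $x_i$ in $B_n$ eventually, whence $x_i \in Y \cap B_n = Y_n$ eventually, and so $x \in \overline{Y_n} = Y_n \subseteq Y$.
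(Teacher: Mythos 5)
Your proof is correct and takes essentially the same greedy marker construction as the paper: cover $Z$ by countably many Borel (clopen) sets on which $F$-translates of a point are separated from the point, then extract $Y$ piece by piece so that no two selected points are $F$-related while every point of $Z$ is $F$-reachable from a selected one. The only substantive variation is in the closedness argument. You first refine the cover to a partition $\{B_n\}$ of $Z$ and use it to localize: a limit of points of $Y$ lands in some unique $B_n$, hence eventually in the clopen $Y_n = Y \cap B_n$, hence in $Y_n$. The paper instead works with a base (not a partition) and builds $Y$ as an increasing union, then establishes closedness globally via the identity $X \setminus Y = \bigl(\bigcup_{1_G \neq g \in F} g \cdot Y\bigr) \cup (X \setminus Z)$, which follows from properties (i) and (ii) and exhibits the complement as an open set. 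Your local argument genuinely needs the partition structure (disjointness and $Y_n = Y \cap B_n$), whereas the paper's global identity is indifferent to whether the covering pieces overlap; both are valid given the respective setups, and the paper's version is arguably a touch cleaner since it avoids the extra refinement step.
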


\begin{proof}
We will assume that $X$ is zero-dimensional, $Z$ is clopen, and the action of $G$ is continuous. The proof in the Borel case follows by ignoring the topology.

Consider a point $z \in Z$. Since $F \cap \Stab(z) = \{1_G\}$, we have that $g \cdot z \neq  z$ for every $1_G \neq g \in F$. Since the action is continuous and $F$ is finite, we can find a clopen neighborhood $U$ of $z$ with $g \cdot U \cap U = \varnothing$ for all $1_G \neq g \in F$. Therefore there is a countable base $\{U_i \: i \geq 0\}$ for the relative topology on $Z$ consisting of clopen subsets of $Z$ with the property that $g \cdot U_i \cap U_i = \varnothing$ for every $1_G \neq g \in F$ and $i \geq 0$. Now, inductively define the Borel sets $Y_i \subseteq Z$ by setting $Y_0 = U_0$ and
$$Y_{i+1} = Y_i \cup \left( U_{i+1} \setminus \bigcup_{g\in F} g \cdot Y_i \right).$$
We will show that $Y = \bigcup_i Y_i$ is a clopen set satisfying conditions (i) and (ii).

For (ii), let $z \in Z$ be arbitrary and let $i$ be least such that $z \in U_i$. Then by the definition of $Y_i$, either $z \in Y_i$ or $i > 0$ and $z = g \cdot y$ for some $g \in F$ and $y \in Y_{i-1}$. In either case the result follows since $1_G \in F$. For (i), we show by induction on $i$ that if $y, y' \in Y_i$ are distinct, then there is no $g \in F$ such that $g \cdot y = y'$. If $i = 0$, this follows immediately from the fact that $g \cdot U_0 \cap U_0 = \varnothing$ for all $1 \ne g \in F$. Now fix $i \geq 0$, and suppose that $y$ and $y'$ are distinct elements of $Y_{i+1}$. We may assume that $y$ and $y'$ do not both belong to $U_{i+1}$, and by the inductive hypothesis we may assume that $y$ and $y'$ do not both belong to $Y_i$. Hence without loss of generality we may assume that
$$y \in Y_i \setminus U_{i+1} \quad \mbox{and} \quad y' \in U_{i+1} \setminus \bigcup_{g \in F} g \cdot Y_i.$$
In particular,
$$y \in Y_i \quad \mbox{and} \quad y' \not\in \bigcup_{g \in F} g \cdot Y_i,$$
so there can be no $g \in F$ such that $y' = g \cdot y$. Since $F$ is symmetric, likewise there is no $g \in F$ such that $y = g \cdot y'$.

Finally, we show that $Y$ is clopen. By continuity of the action, any translate of any clopen set is clopen, so it is easy to verify by induction on $i$ that each $Y_i$ is clopen, which implies that $Y$ is open. Now clauses (i) and (ii) imply that
$$X \setminus Y = \left( \bigcup_{1_G \ne g \in F} g \cdot Y \right) \cup \big( X \setminus Z \big)$$
is open as well, so $Y$ is clopen.
\end{proof}

\begin{cor} \label{COR PART}
Let $G \acts X$ be a Borel action of $G$ on the Polish space $X$, and let $Y \subseteq X$ be a non-empty Borel set. If $1_G \in F \subseteq G$ is finite and symmetric with $F \cap \Stab(y) = \{1_G\}$ for every $y \in Y$, then there is a partition of $Y$ into finitely many disjoint Borel sets
$$Y = Y_1 \sqcup \cdots \sqcup Y_k$$
such that for all $1\leq i\leq k$ and distinct points $y, y' \in Y_i$, there is no $g \in F$ such that $g \cdot y = y'$. Furthermore, if $X$ is zero-dimensional, $Y$ is clopen, and the action of $G$ is continuous, then the $Y_i$'s can be chosen to be clopen as well.
\end{cor}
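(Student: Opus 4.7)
The plan is to obtain the partition by iterated application of Lemma~\ref{LEM MARKER}. Set $Z_1 := Y$, and inductively define $Y_j$ to be the marker set produced by Lemma~\ref{LEM MARKER} applied to $Z_j$, and $Z_{j+1} := Z_j \setminus Y_j = Y \setminus (Y_1 \cup \cdots \cup Y_j)$. The hypothesis $F \cap \Stab(y) = \{1_G\}$ passes to any Borel subset of $Y$, so the lemma may be invoked at each stage. Clause (i) of the lemma guarantees that distinct points of any $Y_j$ are never $F$-related, so once we exhibit a finite collection $Y_1, \ldots, Y_k$ exhausting $Y$ we obtain the desired partition after discarding any empty $Y_j$.

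The main step is showing that this recursion terminates after at most $|F|$ rounds. Suppose $y \in Z_{j+1}$, so $y \in Y$ and $y \notin Y_i$ for each $1 \leq i \leq j$. Applying clause (ii) of Lemma~\ref{LEM MARKER} at stage $i$ (using that $y \in Z_i$), we obtain $y_i \in Y_i$ and $g_i \in F$ with $g_i \cdot y_i = y$; since $y \notin Y_i$ we have $y_i \neq y$ and hence $g_i \neq 1_G$. Because the $Y_i$ are pairwise disjoint, the points $y_i = g_i^{-1} \cdot y$ are pairwise distinct, which forces the elements $g_i^{-1}$ to be pairwise distinct, and by symmetry of $F$ they all lie in $F \setminus \{1_G\}$. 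Hence $j \leq |F| - 1$, and so after at most $|F|$ iterations one reaches $Z_{k+1} = \varnothing$ with $Y = Y_1 \sqcup \cdots \sqcup Y_k$.

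For the clopen enhancement, observe that $Z_1 = Y$ is clopen by hypothesis, and inductively each $Y_j$ is clopen by the clopen clause of Lemma~\ref{LEM MARKER}; hence $Z_{j+1}$ remains clopen and the lemma can be reapplied in its clopen form. I do not anticipate any real obstacle: the only substantive point is the pigeonhole-style termination bound, which is immediate from the injectivity observation above.
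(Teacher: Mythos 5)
Your proof is correct and follows essentially the same strategy as the paper's: iteratively invoke Lemma~\ref{LEM MARKER} on the shrinking sets $Z_j = Y \setminus (Y_1 \cup \cdots \cup Y_{j-1})$, then use a pigeonhole argument (disjointness of the $Y_i$'s versus $|F|$) to bound the number of rounds. The only difference is cosmetic: you track distinct non-identity elements $g_i^{-1} \in F$ to get the bound $k \le |F|$, while the paper argues by contradiction to get $k \le |F|+1$ — your bound is marginally sharper but the mechanism is identical.
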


\begin{proof}
First apply Lemma \ref{LEM MARKER} with $Z = Y$ to obtain a Borel (clopen) set $Y_1 \subseteq Y$ with the property that $F \cdot Y_1 \supseteq Y$ and $g \cdot Y_1 \cap Y_1 = \varnothing$ for every $1_G \neq g \in F$. Then inductively, given disjoint nonempty Borel (clopen) subsets $Y_1, \ldots, Y_i$ of $Y$ such that $Y \setminus (Y_1 \sqcup \cdots \sqcup Y_i) \ne \varnothing$, apply Lemma \ref{LEM MARKER} with
$$Z = Y \setminus \big( Y_1 \sqcup \cdots \sqcup Y_i \big)$$
to obtain a Borel (clopen) set $Y_{i+1} \subseteq Y \setminus (Y_1 \sqcup \cdots \sqcup Y_i)$ with the property that
$$F \cdot Y_{i+1} \supseteq Y \setminus \big( Y_1 \sqcup \cdots \sqcup Y_i \big)$$
and $g \cdot Y_{i+1} \cap Y_{i+1} = \varnothing$ for every $1_G \neq g \in F$. We claim that there is $k \leq t := |F|+1$ such that $Y = Y_1 \sqcup \cdots \sqcup Y_k$. If not, then $Y \setminus (Y_1 \sqcup \cdots \sqcup Y_t)$ is nonempty, say it contains $y$. Then we have $y \in F \cdot Y_i$ for all $1 \leq i \leq t$. As $F$ is symmetric, this implies $Y_i \cap F \cdot y \neq \varnothing$ for all $1 \leq i \leq t$, contradicting the fact that the $Y_i$ are pairwise disjoint.
\end{proof}

The following notion, introduced in \cite{GJ}, will be useful in constructing continuous reductions to $E_0$.

\begin{defn} \label{DEFN GCLOPEN}
Let $G$ act continuously on the Polish space $X$. A relation $R \subseteq X \times X$ is \emph{$G$-clopen} if for every $g \in G$ the set $\{x \in X \: (x, g \cdot x) \in R\}$ is clopen.
\end{defn}

In Remark 3.3 of \cite{GJ}, Gao and Jackson called an equivalence relation satisfying Definition \ref{DEFN GCLOPEN} \emph{clopen} rather than $G$-clopen, and used this terminology throughout \cite{GJ}. This is in conflict with the standard notion of clopen (viewing $R$ as a subset of $X \times X$), and we adopt the new terminology in order to avoid ambiguity. In practice we will only use the notion for two types of relations: equivalence relations on $X$ and (graphs of) functions from $X$ to $X$. The first lemma below addresses the connection between clopenness and $G$-clopenness.

\begin{lem}
Let $G$ act continuously on the Polish space $X$ and let $E$ be an equivalence relation on $X$. If $E$ is contained in $E_G^X$ and is $G$-clopen, then $E$ is $F_\sigma$ (in particular, $E$ is Borel). On the other hand, if $E$ is clopen then $E$ is $G$-clopen.
\end{lem}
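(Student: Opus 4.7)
The plan is to handle the two implications separately, each being essentially a one-paragraph observation.

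For the first part, I would start from the identity
\[
E = \bigcup_{g \in G} \bigl\{(x, g \cdot x) \: x \in X,\ (x, g \cdot x) \in E\bigr\},
\]
which is valid precisely because $E \subseteq E_G^X$: every $E$-related pair is of the form $(x, g \cdot x)$ for some $g \in G$. For each fixed $g \in G$, set $A_g = \{x \in X \: (x, g \cdot x) \in E\}$, which is clopen in $X$ by the $G$-clopenness hypothesis. The set appearing in the union is then the graph of the continuous map $x \mapsto g \cdot x$ restricted to the closed set $A_g$. Since $X$ is Hausdorff, this graph is closed in $A_g \times X$, and since $A_g$ is closed in $X$ it is in fact closed in $X \times X$. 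Thus $E$ is exhibited as a countable union of closed sets, hence $F_\sigma$ and in particular Borel.

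For the converse, assume $E$ is clopen as a subset of $X \times X$. For each $g \in G$, the map $\varphi_g : X \to X \times X$ defined by $\varphi_g(x) = (x, g \cdot x)$ is continuous, by continuity of the $G$-action and of the identity. The set $\{x \: (x, g \cdot x) \in E\}$ is precisely $\varphi_g^{-1}(E)$, which is clopen as the preimage of a clopen set under a continuous map. Hence $E$ is $G$-clopen.

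The only mild subtlety is verifying that the graph of a continuous partial map on a closed domain is closed in $X \times X$; this is a standard Hausdorff argument (a limit of points $(x_n, g \cdot x_n)$ with $x_n \in A_g$ has first coordinate in $A_g$ and second coordinate equal to $g$ times the first by continuity), and I do not expect any real obstacle in the proof.
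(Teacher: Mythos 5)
Your proof is correct and follows essentially the same route as the paper: for the forward direction you decompose $E$ over $g \in G$ into the graphs of $x \mapsto g \cdot x$ restricted to the clopen sets $E_g$, observe each is closed, and conclude $F_\sigma$; for the converse you pull back $E$ along $x \mapsto (x, g\cdot x)$. No differences worth noting.
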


\begin{proof}
First suppose that $E \subseteq E_G^X$ and that $E$ is $G$-clopen. For $g \in G$ define
$$E_g = \{x \in X \: x \ E \ g \cdot x\}.$$
The assumption $E \subseteq E_G^X$ gives
$$(x, y) \in E \quad \Longleftrightarrow \quad (\exists g \in G) \ y = g \cdot x \ \wedge \ x \in E_g.$$
Since $E$ is $G$-clopen and the action is continuous, for each fixed $g \in G$ the condition on the right is closed in $X \times X$. Therefore $E$ is an $F_\sigma$ subset of $X \times X$ as claimed.

Now suppose that $E$ is a clopen subset of $X \times X$, fix $g\in G$, and define $\pi_g : X \rightarrow X \times X$ by $\pi_g(x) = (x, g \cdot x)$. Then $E_g$ is the inverse image of the clopen set $E$ under the continuous map $\pi_g$, so $E_g$ is clopen.
\end{proof}

\begin{lem} \label{LEM GHCLOPEN}
Let $G$ act freely and continuously on the Polish space $X$, let $E$ be an equivalence relation on $X$, and let $H \leq G$.
\begin{enumerate}
\item[\rm (i)] If $E$ is $G$-clopen then $E$ is also $H$-clopen.
\item[\rm (ii)] If $E$ is $H$-clopen and $E \subseteq E_H^X$ then $E$ is $G$-clopen.
\end{enumerate}
\end{lem}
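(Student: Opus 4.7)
Part (i) is essentially a triviality: by definition, $E$ being $G$-clopen means the set $E_g := \{x \in X : (x, g\cdot x) \in E\}$ is clopen for every $g \in G$, and $H$-clopenness asks for the same condition only for $g \in H$. Since $H \subseteq G$, nothing needs to be done.

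The content of the lemma is in part (ii). The plan is: fix an arbitrary $g \in G$ and show that $E_g$ is clopen, splitting into the two cases $g \in H$ and $g \notin H$. The case $g \in H$ is free: $H$-clopenness of $E$ directly gives that $E_g$ is clopen. So the work is all in the case $g \notin H$, and here the strategy is to argue that $E_g$ must in fact be empty (which is of course clopen).

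To verify $E_g = \varnothing$ when $g \notin H$, suppose for contradiction that some $x \in E_g$, so $(x, g \cdot x) \in E$. Since $E \subseteq E_H^X$ by hypothesis, we have $g \cdot x \in H \cdot x$, meaning there exists $h \in H$ with $h \cdot x = g \cdot x$. Then $h^{-1}g \cdot x = x$, and freeness of the $G$-action forces $h^{-1} g = 1_G$, i.e., $g = h \in H$, contradicting $g \notin H$. Thus $E_g = \varnothing$, completing the proof.

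The main (and only) thing to notice is the interplay between the containment $E \subseteq E_H^X$ and freeness: the containment forbids $E$ from relating a point to anything outside its $H$-orbit, and freeness then upgrades this to a restriction on which group elements $g \in G$ can ever witness an $E$-relation. There is no real obstacle here; the lemma is essentially a bookkeeping observation that lets one pass between the $G$-clopen and $H$-clopen frameworks whenever the equivalence relation under consideration is contained in the $H$-orbit equivalence relation.
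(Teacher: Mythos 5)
Your proof is correct and takes essentially the same approach as the paper's: part (i) is immediate since $H \subseteq G$, and for part (ii) one observes that $E_g$ is clopen for $g \in H$ by hypothesis and is empty for $g \notin H$ by freeness together with $E \subseteq E_H^X$. The only difference is that you spell out the contradiction argument for $E_g = \varnothing$, which the paper leaves implicit.
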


\begin{proof}
Clause (i) is immediate from the definitions and does not require freeness of the action. For clause (ii), given $g \in G$ the set $\{x \in X \: x \ E \ g \cdot x\}$ is clopen if $g \in H$ and is empty if $g \not\in H$ since the action is free and $E \subseteq E_H^X$. So in either case the set is clopen.
\end{proof}

\begin{lem} \label{LEM SELECT}
Let $G$ act freely and continuously on the zero-dimensional Polish space $X$. Let $E$ be a finite $G$-clopen equivalence relation on $X$ and suppose there is a finite set $K \subseteq G$ such that $[x]_E \subseteq K \cdot x$ for all $x \in X$. Then there is a continuous and $G$-clopen selector $S$ for $E$, i.e.\ a continuous and $G$-clopen function $S : X \rightarrow X$ such that for all $x, y \in X$, we have $x \ E \ S(x)$ and
$$x \ E \ y \quad \Longleftrightarrow \quad S(x) = S(y).$$
\end{lem}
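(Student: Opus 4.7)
The strategy is to reduce to finding a clopen transversal for a free action of a finite subgroup of $G$ on a clopen set, and then apply Lemma \ref{LEM MARKER}. For each $g \in G$, the set $A_g := \{x \in X \: x \mathrel{E} g \cdot x\}$ is clopen since $E$ is $G$-clopen, and is empty for $g \notin K$ because $[x]_E \subseteq K \cdot x$ and the action is free. Assume without loss of generality that $1_G \in K$. For each $T \subseteq K$ with $1_G \in T$, define
$$X_T := \bigcap_{k \in T} A_k \ \cap \bigcap_{k \in K \setminus T} (X \setminus A_k).$$
Then $\{X_T\}_T$ is a finite clopen partition of $X$, and $x \in X_T$ precisely when $K_x := \{k \in K \: k \cdot x \in [x]_E\}$ equals $T$. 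By freeness, $[x]_E = T \cdot x$ for $x \in X_T$.

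The key structural observation is that $E|_{X_T}$ is the orbit equivalence relation of a genuine finite subgroup of $G$ acting freely on $X_T$. Let $R(T) := \{h \in G \: Th = T\}$. This is a subgroup (closed under products, and finite since $R(T) \subseteq T$: indeed $1_G \in T$ gives $h = 1_G \cdot h \in Th = T$ for any $h \in R(T)$). A direct computation shows $K_{r \cdot x} = K \cap Tr^{-1} = T$ for all $r \in R(T)$ and $x \in X_T$, so $R(T)$ preserves $X_T$. Conversely, if $x, y \in X_T$ are $E$-related, then $y = h \cdot x$ for some $h \in T = K_x$, and $y \in X_T$ forces $Th^{-1} = T$, i.e., $h \in R(T)$. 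Thus $E|_{X_T} = E^{X_T}_{R(T)}$, a free continuous action of the finite group $R(T)$ on the clopen set $X_T$.

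Now apply Lemma \ref{LEM MARKER} to the restriction of the $G$-action to $R(T)$, with $Z = X_T$ and $F = R(T)$ (finite, symmetric, containing $1_G$; freeness ensures $F \cap \Stab(z) = \{1_G\}$), to obtain a clopen set $Y_T \subseteq X_T$ meeting every $R(T)$-orbit in exactly one point. Define $S : X \to X$ by $S(x) := r \cdot x$ for $x \in X_T$, where $r \in R(T)$ is the unique element with $r \cdot x \in Y_T$. On each clopen set $\{x \in X_T \: r \cdot x \in Y_T\} = r^{-1} \cdot Y_T \cap X_T$, $S$ agrees with the continuous map $x \mapsto r \cdot x$, so $S$ is continuous. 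It is a selector for $E$ since each $Y_T$ is a transversal for $E|_{X_T}$ and the $X_T$'s are $E$-invariant. For $G$-clopenness,
$$\{x \in X \: S(x) = g \cdot x\} = \bigsqcup_{T \: g \in R(T)} \big( g^{-1} \cdot Y_T \cap X_T \big)$$
is a finite union of clopen sets. The main conceptual step is the identification of $E|_{X_T}$ with the orbit equivalence relation of the honest subgroup $R(T) \leq G$; without it, naively picking ``the minimum'' element of $K_x$ in a fixed order on $K$ fails to define a selector, because the types $K_y$ of distinct points $y \in [x]_E$ are related by right-multiplications that need not respect the chosen order on $K$.
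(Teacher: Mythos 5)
Your reduction to the subgroup $R(T)$ is an elegant observation, and the computation $K_{r \cdot x} = K \cap T r^{-1}$ is correct, but the argument breaks at the assertion that ``the $X_T$'s are $E$-invariant.'' They are not. For $x \in X_T$ and $y = h \cdot x \in [x]_E$ with $h \in T$, the same computation gives $K_y = K \cap T h^{-1}$, and since $[x]_E = [y]_E \subseteq K \cdot y$ forces $T h^{-1} \subseteq K$, in fact $K_y = T h^{-1}$. This equals $T$ only when $h \in R(T)$; for $h \in T \setminus R(T)$ the point $y$ lands in the different piece $X_{T h^{-1}}$. Your own closing remark, that the types $K_y$ of distinct points $y \in [x]_E$ are related by right-multiplications, is exactly the correct picture, and it already contradicts the invariance claim.

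The consequence is that $S$ fails to be a selector. Its image $\bigcup_T Y_T$ contains one representative from each nonempty set $[x]_E \cap X_T$, so an $E$-class meeting several $X_T$'s is hit several times: if $x \in X_T$ and $y \in X_{T'}$ are $E$-equivalent with $T \neq T'$, then $S(x) \in Y_T \subseteq X_T$ and $S(y) \in Y_{T'} \subseteq X_{T'}$ are distinct although $x \mathrel{E} y$. For a minimal example take $G = \Z$, $K = \{0,1\}$, and an $E$-class $\{x, 1 \cdot x\}$; by freeness $1 \cdot x \neq x$, and one computes $K_x = \{0,1\}$, $K_{1 \cdot x} = \{0\}$, with $R(\{0,1\}) = R(\{0\}) = \{0\}$, so $S(x) = x \neq 1 \cdot x = S(1 \cdot x)$. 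Symmetrizing $K$ does not remove the problem. The idea could perhaps be repaired by first moving each $x$ canonically into $X_{T_0}$, where $T_0$ is the minimum, in a fixed order on subsets of $K$, among the types $\{T h^{-1} : h \in T\}$ occurring in its class, but this needs further justification of well-definedness and clopenness. The paper's own proof sidesteps the issue: it fixes a countable clopen basis $\{U_n\}$ and sets $S(x)$ to be the unique point of $U_{n(x)} \cap [x]_E$ where $n(x)$ is least with $|U_{n(x)} \cap [x]_E| = 1$, then verifies continuity and $G$-clopenness directly from clopenness of the sets $\{x \in X : x \mathrel{E} k \cdot x\}$ for $k \in K$.
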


\begin{proof}
Fix a countable base of clopen sets $\{U_n \: n \in \N\}$ for the topology on $X$. For $x \in X$ let $S(x)$ be such that
$$\{S(x)\} = U_{n(x)} \cap [x]_E$$
where $n(x)$ is least with $|U_{n(x)} \cap [x]_E| = 1$. Such an $n(x)$ exists since $E$ is finite and $X$ is Hausdorff. Clearly $S$ is a selector for $E$, so it only remains to check that $S$ is continuous and $G$-clopen. We accomplish this by showing that for each $k\in K$ the set of $x\in X$ for which $S(x)=k\cdot x$ is clopen. This will suffice since the action is continuous and $\{x\in X\: S(x)=g\cdot x\}=\varnothing$ for $g\in G\setminus K$.

For $g \in G$ let $E_g$ be the clopen set $\{x \in X \: x \ E \ g \cdot x\}$, and for $x \in X$ let
$$C(x) = \{k \in K \: x \ E \ k \cdot x\}.$$
Then for any subset $K_0 \subseteq K$, the set
$$\{x \in X \: C(x) = K_0\} \ = \ \left( \bigcap_{k\in K_0} E_k \right) \setminus \left( \bigcup_{k\in K\setminus K_0} E_k \right)$$
is clopen. Hence for each $K_0 \subseteq K$ and $n \in \N$ the set
$$\{x\in X \: C(x)=K_0 \; \wedge \; |U_n\cap [x]_E|=1\}$$
is clopen, as it the intersection of $\{x \in X \: C(x) = K_0\}$ with the clopen set
$$\bigcup_{k\in K_0}\left(k^{-1}\cdot U_n\setminus\bigcup_{k\ne h\in K_0}h^{-1}\cdot U_n\right)_.$$
(Here we use the freeness of the action). From this it follows that for each $K_0\subseteq K$ and $m\in\N$ the set
$$X_{K_0,m} \ = \ \{x\in X\:C(x)=K_0\;\wedge\; n(x)=m\}$$
is clopen, and therefore for each $m\in\N$,
$$X_m \ = \ \{x\in X\: n(x)=m\} \ = \ \bigcup_{K_0\subseteq K}X_{K_0,m}$$
is clopen. Finally, this implies that for each $k\in K$ the set
$$X(k) \ = \ \{x\in X\:S(x)=k\cdot x\} \ = \ E_k\cap\left[\bigcup_{m\in\N}\left(X_m\cap k^{-1}\cdot U_m\right)\right]$$
is open. Now $X=\bigsqcup_{k\in K}X(k)$ is a finite partition of $X$ into open sets, so each $X(k)$ must in fact be clopen.
\end{proof}

Our final lemma in this section was demonstrated in \cite{GJ}, but we include a proof as a convenience to the reader. Our proof will make use of the well-known fact that given a countable group $G$ and a Borel action of $G$ on the standard Borel space $X$, there is a $G$-equivariant embedding of $X$ into $(2^\N)^G$. Specifically, let $G$ act on $(2^\N)^G$ by permuting coordinates on the left, so that $(g\cdot y)(i,h)=y(i,g^{-1}h)$ for $y\in (2^\N)^G$, $i\in\N$, and $g, h\in G$. Fix a sequence $(U_i)$ of Borel sets in $X$ that separates points and define $\phi:X\to (2^\N)^G$ by
$$\phi(x)(i,h)=1 \quad \Longleftrightarrow \quad x\in h\cdot U_i.$$
Then $\phi$ is Borel, injective, and $G$-equivariant, meaning that $\phi(g\cdot x)=g\cdot\phi(x)$ for all $x\in X$ and $g\in G$. If $X$ is a zero-dimensional Polish space, $G$ acts continuously, and we take the $U_i$ to be a clopen base, then additionally $\phi$ is continuous.

\begin{lem}[Gao--Jackson, \cite{GJ}] \label{LEM E0}
Let $G$ act freely and continuously on the zero-dimensional Polish space $X$. Let $F_1, F_2, \ldots$ be a sequence of finite $G$-clopen equivalence relations on $X$. Suppose there are finite sets $K_n \subseteq G$ such that $[x]_{F_n} \subseteq K_n \cdot x$ for all $n \geq 1$ and all $x \in X$. Then there is a continuous embedding of $F$ into $E_0$, where $F$ is defined by
$$x \ F \ y \quad \Longleftrightarrow \quad (\exists m \in \N) \ (\forall n \geq m) \ x \ F_n \ y.$$
\end{lem}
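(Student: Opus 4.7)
The plan is to combine the continuous $G$-clopen selectors from Lemma~\ref{LEM SELECT} with the $G$-equivariant continuous injection $\phi: X \to (2^\N)^G$ described just before the statement. For each $n$, apply Lemma~\ref{LEM SELECT} to produce a continuous $G$-clopen selector $S_n: X \to X$ for $F_n$; freeness of the action together with $G$-clopenness let us write $S_n(x) = f_n(x)\cdot x$ for a continuous function $f_n: X \to K_n$, where $K_n$ is given the discrete topology. Since $S_n$ is a selector, $\phi \circ S_n$ is constant on each $F_n$-class, and the injectivity of $\phi$ yields the characterization $xFy$ if and only if $\phi(S_n(x)) = \phi(S_n(y))$ for all sufficiently large $n$.

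I would then enumerate $\N \times G = \{(i_k, g_k) : k \in \N\}$ and define a continuous map $\Psi: X \to 2^\N$ by placing, in the $n$-th block of $\Psi(x)$ of length $n$ (say at positions $\binom{n}{2}, \binom{n}{2}+1, \ldots, \binom{n+1}{2}-1$), the bits $\phi(S_n(x))(i_1, g_1), \ldots, \phi(S_n(x))(i_n, g_n)$. The forward direction of the reduction is then immediate: if $xFy$ with threshold $N$, the blocks with $n \geq N$ agree completely while the total length of the earlier blocks is $\sum_{n<N} n < \infty$, so $\Psi(x) \mathrel{E_0} \Psi(y)$. For the converse, $\Psi(x) \mathrel{E_0} \Psi(y)$ forces all but finitely many blocks to agree in full, so for each $(i_k, g_k)$ we have $\phi(S_n(x))(i_k, g_k) = \phi(S_n(y))(i_k, g_k)$ for every sufficiently large $n$; one argues via $G$-equivariance $\phi(S_n(\cdot)) = f_n(\cdot)\cdot\phi(\cdot)$ and freeness that if $S_n(x) \neq S_n(y)$ infinitely often, then a witness $(i, g)$ distinguishing $\phi(S_n(x))$ from $\phi(S_n(y))$ must appear among the first $n$ enumerated pairs for infinitely many $n$, contradicting eventual agreement. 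Injectivity of $\Psi$ is handled by prepending a short initial block encoding $f_1(x)$ together with a few low-index coordinates of $\phi(x)$, enough to separate any two distinct points $x,y$ that happen to satisfy $S_n(x) = S_n(y)$ for every $n$.

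The main obstacle is calibrating the enumeration of $\N \times G$ and the block lengths to keep pace with the growth of $K_n$. Since $K_n$ may grow without bound, the ``complexity'' of the witness $(i, g)$ used for the converse, roughly the smallest coordinate $i$ distinguishing $x$ from $e\cdot x$ for $e$ ranging in $K_n^{-1} K_n h$, together with $g$ ranging in $K_n$, may itself grow with $n$, so a linear block length of $n$ may fail to catch all witnesses. The natural remedy is to let the $n$-th block have length polynomial in $|K_n|$, and to order the enumeration of $\N \times G$ so that pairs with small $i$ and with $g$ lying in a fixed exhaustion of $G$ appear first. This careful coordination of block sizes with the $K_n$'s is the technical heart of the construction.
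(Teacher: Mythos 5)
There is a genuine gap, and it sits exactly where you flagged ``the technical heart of the construction.'' Your encoding records, in block $n$, a finite window of $\phi(S_n(x))$, and the blocks for different $n$ carry no information about one another. Knowing that block $n$ of $\Psi(x)$ equals block $n$ of $\Psi(y)$ tells you only that $\phi(S_n(x))$ and $\phi(S_n(y))$ agree on the coordinates enumerated in that block; it gives you no leverage over the remaining coordinates. And the first coordinate $(i,g)$ at which $\phi(S_n(x))$ and $\phi(S_n(y))$ differ is not controlled by $n$ or by $|K_n|$: it depends on where in the separating base $(U_i)$ the two distinct points $S_n(x)\neq S_n(y)$ happen to part ways, and that index can be arbitrarily large. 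So recalibrating the block length --- linear in $n$, polynomial in $|K_n|$, or anything faster --- cannot close the converse direction.

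The ingredient you are missing is the increment data $g_n(x)\in K_nK_{n-1}$ defined by $S_n(x) = g_n(x)\cdot S_{n-1}(x)$, which the paper records in each block alongside the local window $\sigma_n(x)$ (the restriction of $S_n(x)$ to $\{0,\ldots,n\}\times K_nK_n$). Once $\Psi(x)\ E_0\ \Psi(y)$ forces $g_m(x)=g_m(y)$ for all $m\geq n$, the product $k:=(g_m(x)\cdots g_{n+1}(x))^{-1}\in K_nK_m$ satisfies $S_n(x)=k\cdot S_m(x)$ and $S_n(y)=k\cdot S_m(y)$ simultaneously. Then for any fixed target coordinate $(i,h)$ one chooses $m$ so large that $i\leq m$ and $K_nh\subseteq K_m$, ensuring $k^{-1}h\in K_mK_m$ lies inside the recorded window, and $\sigma_m(x)=\sigma_m(y)$ yields $S_n(x)(i,h)=S_n(y)(i,h)$. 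This transport of a growing observation window across scales is what carries the converse (and, taking $n=0$, injectivity), and it is impossible without the increments. The remaining ingredients of your sketch --- the equivariant embedding, the continuous $G$-clopen selectors, the concatenated block structure, and the forward direction --- match the paper; it is precisely the omission of the $g_n$'s that breaks it, and no amount of coordinating block sizes with the $K_n$'s will substitute for them.
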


\begin{proof}
In this proof we write $2^n$ for the set of all binary sequences of length $n$. Let $F_0$ be the equality relation on $X$ and set $K_0 = \{1_G\}$. By enlarging the $K_n$'s we may suppose that the sets $K_n$, $n \in \N$, are increasing, symmetric, and exhaust $G$. Since $X$ is zero-dimensional and $G$ acts continuously, there is a $G$-equivariant continuous embedding of $X$ into $(2^\N)^G$, as described above. So without loss of generality, we may suppose that $X \subseteq (2^\N)^G$.

For each $n \in \N$, let $S_n: X \rightarrow X$ be a continuous and $G$-clopen selector for $F_n$, as given by Lemma \ref{LEM SELECT}. For $x \in X$, let $g_0(x)=1_G$ and for $n\geq 1$ define $g_n(x)$ to be the unique element of $K_nK_{n-1}\subseteq K_nK_n$ such that $S_n(x)=g_n(x)\cdot S_{n-1}(x)$. Notice that each function $g_n$ is continuous since the $S_n$'s are $G$-clopen. Next, for $x \in X\subseteq (2^\N)^G=2^{\N\times G}$ and $n \in \N$ let $\sigma_n(x)$ denote the restriction of $S_n(x)$ to the domain $\{0, 1, \ldots, n\} \times (K_n K_n)$. Note that $\sigma_n(x) \in (2^{n+1})^{K_n K_n}$ and for each $n$ the map $x\mapsto \sigma_n(x)$ is continuous. Also fix for each $n$ an integer $m(n)$ and an injection $\theta_n: (2^{n+1})^{K_n K_n} \times K_n K_n \rightarrow 2^{m(n)}$.

Now define $f: X \rightarrow 2^\N$ by
$$f(x) = \theta_0(\sigma_0(x), g_0(x)) ^\frown \theta_1(\sigma_1(x), g_1(x)) ^\frown \cdots ^\frown \theta_n(\sigma_n(x), g_n(x)) ^\frown \cdots,$$
where $\pi ^\frown \tau$ denotes the concatenation of $\pi, \tau \in 2^{< \N}$ (here $2^{< \N}$ denotes the set of all finite sequences of $0$'s and $1$'s). Then $f$ is continuous since the $\sigma_n$'s and $g_n$'s are continuous. If $F$ is defined as in the statement of the lemma then $x \ F \ y$ implies $S_n(x) = S_n(y)$ for all sufficiently large $n$. This implies that $\theta_n(\sigma_n(x), g_n(x)) = \theta_n(\sigma_n(y), g_n(y))$ for all sufficiently large $n$, so $x \ F \ y$ implies $f(x) \ E_0 \ f(y)$.

Now we check that $f(x) \ E_0 \ f(y)$ implies $x \ F \ y$ and that $f$ is injective. Suppose that $f(x) \ E_0 \ f(y)$, and choose any value of $n$ such that $\theta_m(\sigma_m(x), g_m(x)) = \theta_m(\sigma_m(y), g_m(y))$ for all $m \geq n$. Then $g_m(x) = g_m(y)$ and $S_m(x)$, $S_m(y)$ agree on the set $(2^{m+1})^{K_m K_m}$ for all $m \geq n$. We will show that this implies $S_n(x) = S_n(y)$. Indeed, let $i \in \N$ and $h \in G$, and fix $m \geq \max(n, i)$ with $K_n h \subseteq K_m$. We have $S_m(x) \in K_m \cdot x$ and $S_n(x) \in K_n \cdot x$ and thus $S_n(x) \in K_n K_m \cdot S_m(x)$. We also have
$$g_m(x) g_{m-1}(x) \cdots g_{n+1}(x) \cdot S_n(x) = S_m(x),$$
so by freeness of the action $(g_m(x) g_{m-1}(x) \cdots g_{n+1}(x))^{-1} \in K_n K_m$. Since $g_t(x) = g_t(y)$ for $t \geq n$ we find that there is a single $k \in K_n K_m$ with both $S_n(x) = k \cdot S_m(x)$ and $S_n(y) = k \cdot S_m(y)$. Now since $K_n h \subseteq K_m$ we have $k^{-1} h \in K_m K_m$ and thus
\begin{align*}
S_n(x)(i, h) = [k \cdot S_m(x)](i, h) & = S_m(x)(i, k^{-1} h)\\
& = S_m(y)(i, k^{-1} h) = [k \cdot S_m(y)](i, h) = S_n(y)(i, h).
\end{align*}
Since $i \in \N$ and $h \in G$ were arbitrary, it follows that $S_n(x) = S_n(y)$. This holds for all sufficiently large $n$, so we conclude $x \ F \ y$. Furthermore, if $f(x) = f(y)$ then we can use $n = 0$ to obtain $x = S_0(x) = S_0(y) = y$, so $f$ is injective.
\end{proof}

\section{Geometry of abelian and nilpotent groups} \label{SEC GEOM}

The arguments used by Gao--Jackson \cite{GJ} relied heavily upon the nice geometry of the groups $\Z^n$ and in particular upon geometric notions such as $n$-dimensional rectangles and their faces. Our arguments will also rely heavily upon geometric notions, though ours will be coarse geometric notions inspired by the geometry of abelian groups. In this section we discuss the relevant geometry of abelian groups and then present a definition, namely that of a \emph{chart}, which will allow us to extend a coarse approximation of this geometry to nilpotent groups.

Throughout $\Gamma$ will always denote a finite additive abelian group. We will consider additive groups of the form $\Z^\ell \times \Gamma$. We view points $\T{v} \in \R^\ell \times \Gamma$ as vectors with $\ell + 1$ coordinates, where the first $\ell$ coordinates range over real numbers and the $(\ell + 1)^\text{st}$ coordinate ranges over elements of $\Gamma$. We will always denote the coordinates of $\T{v} \in \R^\ell \times \Gamma$ by $v_i$, $1 \leq i \leq \ell + 1$. Given $\ell$ and $\Gamma$, we let $\T{0}$ and $\T{1}$ denote the vectors whose first $\ell$ coordinates are $0$ and $1$, respectively, and whose $(\ell + 1)^\text{st}$ coordinate is $1_\Gamma$. We let $\T{e}_i$, $1\leq i\leq\ell$, denote the vector with value $1$ in the $i^\text{th}$-coordinate, value $1_\Gamma$ in the $(\ell + 1)^\text{st}$-coordinate, and value $0$ in all other coordinates. (The dependence of $\T{e}_i$, $\T{0}$, and $\T{1}$ on $\ell$ and $\Gamma$ will never cause confusion). If $\T{v} = (v_1, v_2, \ldots, v_\ell, v_{\ell + 1}) \in \R^\ell \times \Gamma$ and $\lambda \in \R$ then we set
$$\lambda \cdot \T{v} = (\lambda v_1, \lambda v_2, \ldots, \lambda v_\ell, v_{\ell + 1}) \in \R^\ell \times \Gamma.$$

For a vector $\T{a} \in \R^\ell \times \Gamma$ we define
$$\Rect(\T{a}) = \{\T{b} \in \Z^\ell \times \Gamma \: -|a_i| \leq b_i \leq |a_i| \text{ for all } 1 \leq i \leq \ell\}.$$
Note that there is no restriction on the $(\ell + 1)^\text{st}$-coordinate of $\T{b} \in \Rect(\T{a})$. A \emph{rectangle} in $\Z^\ell \times \Gamma$ is any set of the form $\T{c} + \Rect(\T{a})$ for $\T{c},\T{a} \in \Z^\ell \times \Gamma$. Observe that even if one ignores $\Gamma$ this is still not quite the standard notion of rectangle since the integral portions of our rectangles must have genuine centers in $\Z^\ell$. Note that if $A = \T{c} + \Rect(\T{a})$ with $\T{c}\in\Z^\ell\times\{1_\Gamma\}$ and $\T{a} \in \N^\ell \times \{1_\Gamma\}$, then $\T{c}$ and $\T{a}$ are uniquely determined from $A$; in this case we call $\T{c}$ the \emph{center} of $A$ and $\T{a}$ the \emph{radius vector} of $A$. We write $\bdim(A)$ for the radius vector $\T{a}$ of $A$ and $\bdim_i(A)$ for its integer components $a_i$ ($1 \leq i \leq \ell$). A rectangle $A$ is \emph{centered} if it can be written in the form $A = \T{c} + \Rect(\T{a})$ with $\T{c} = \T{0}$. A \emph{translate} of $A$ is a rectangle of the form $\T{t} + A$ where $\T{t} \in \Z^\ell \times \Gamma$. Note that $\bdim(A) = \bdim(\T{t} + A)$. We write $A \sqsubseteq B$ to mean that a translate of the rectangle $A$ is contained in the rectangle $B$, or equivalently $\bdim_i(A) \leq \bdim_i(B)$ for each $1 \leq i \leq \ell$. If the rectangle $A$ is centered at $\T{c}$ and $\lambda \in \R_+$ then we let $\lambda \cdot A = \T{c} + \Rect(\lambda \cdot \bdim(A))$. Note that $\bdim_i(\lambda \cdot A) = \lfloor \lambda \cdot \bdim_i(A) \rfloor$ for $1 \leq i \leq \ell$ (where $\lfloor x \rfloor$ denotes the greatest integer less than or equal to the real number $x$). We write $-A$ for the rectangle $\{-\T{a}\:\T{a}\in A\}$, and $-\lambda\cdot A$ for $- (\lambda \cdot A)$ when $\lambda>0$. Observe that if $A$ is centered at $\T{c}$ and $\lambda > 0$ then $- \lambda \cdot A$ is centered at $-\T{c}$.

For a rectangle $A \subseteq \Z^\ell\times\Gamma$ centered at $\T{c}$ and $1 \leq i \leq \ell$ we set
$$A^i = \T{c} + \Rect \big( \bdim(A) - \bdim_i(A) \cdot \T{e}_i \big).$$
In other words, $A^i$ is obtained from $A$ by flattening $A$ in the $i^\text{th}$ coordinate direction, or more precisely $A^i$ is the rectangle centered at the same point as $A$ and with $\bdim_j(A^i) = \bdim_j(A)$ for $j \neq i$ and $\bdim_i(A^i) = 0$. Notice that $-\bdim_i(A) \cdot \T{e}_i + A^i$ and $\bdim_i(A) \cdot \T{e}_i + A^i$ are the two faces of $A$ which are perpendicular to $\T{e}_i$.

Our notation for rectangles is convenient in that it allows for streamlined proofs of our results. However we should point out that the notation has some shortcomings, and occasionally suggests statements which are not true. For example, the (element-wise) sum $A + A$ is indeed a rectangle, but is not equal to $2 \cdot A$, as the center of $A + A$ is twice the center of $A$ while the center of $2 \cdot A$ is the same as the center of $A$. Similarly, we have that $\lambda\cdot A-\eta\cdot A\subseteq (\lambda+\eta)\cdot C$ where $C$ is the centered translate of $A$ and $\lambda,\eta\in\R_+$. Two further examples are that $\lambda\cdot A+\eta\cdot A$ need not equal $(\lambda+\eta)\cdot A$ even when $A$ is centered, and $\lambda \cdot (\eta \cdot A) \neq (\lambda \eta) \cdot A$ in general, although see clauses (i) and (vii) of Lemma \ref{LEM BASIC} below. These phenomena are very minor nuisances and are dealt with by Lemma \ref{LEM BASIC}. We will write $\lambda \eta \cdot A$ for $(\lambda \eta) \cdot A$ and will make clear use of parentheses in the few rare cases where a different order of operations is desired.

\begin{lem} \label{LEM BASIC}
Let $A, B \subseteq \Z^\ell \times \Gamma$ be rectangles, and let $\epsilon$, $\delta$, $\lambda$, and $\eta$ be positive real numbers with $\delta \cdot B \sqsupseteq \Rect(\T{1})$.
\begin{enumerate}
\item[\rm (i)] $\lambda \cdot (\eta \cdot A) \subseteq \lambda \eta \cdot A$.
\item[\rm (ii)] If $\lambda\leq\eta$, then $\lambda\cdot A\subseteq\eta\cdot A$.
\item[\rm (iii)] If $A\sqsubseteq B$, then $\lambda\cdot A\sqsubseteq\lambda\cdot B$.
\item[\rm (iv)] If $2 \delta \cdot B \sqsubseteq \epsilon \cdot A$, then $\lambda \delta \cdot B \sqsubseteq \lambda \epsilon \cdot A$.
\item[\rm (v)] If $A$ is centered and $A \sqsubseteq \epsilon \cdot B$, then $\lambda \cdot B + A \subseteq (\lambda + \epsilon) \cdot B$.
\item[\rm (vi)] If $A$ is centered and $2 \delta \cdot B \sqsubseteq A$, then $(\lambda + \delta) \cdot B \subseteq \lambda \cdot B + A$.
\item[\rm (vii)] If $A$ is centered, then $\lambda\cdot A+\eta\cdot A\subseteq (\lambda+\eta)\cdot A$.
\end{enumerate}
\end{lem}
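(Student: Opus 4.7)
The proof is a routine exercise in floor arithmetic, and the strategy is to peel away the two directions in which scaled rectangles are ``free'' — the common center preserved by scaling and the $(\ell+1)^{\text{st}}$ coordinate that ranges freely over $\Gamma$ — and then reduce each clause to an inequality among the integer widths $\bdim_i$. For clauses (i)--(iv) this reduces the task, via $\bdim_i(\lambda\cdot A)=\lfloor\lambda\bdim_i(A)\rfloor$ and the equivalence $A\sqsubseteq B\Leftrightarrow(\forall i)\,\bdim_i(A)\leq\bdim_i(B)$, to a pure floor inequality in each $i\leq\ell$. For clauses (v)--(vii), which involve Minkowski sums, it reduces to a triangle inequality in the integer coordinates together with the fact that the $\Gamma$ coordinate is always unconstrained.

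Once this setup is in place, clauses (i)--(iii), (v), and (vii) are routine. Clause (i) follows from $\lfloor\lambda\lfloor\eta x\rfloor\rfloor\leq\lfloor\lambda\eta x\rfloor$ with $x=\bdim_i(A)$; (ii) and (iii) are monotonicity of the floor; and (v) and (vii) follow from the subadditivity $\lfloor s\rfloor+\lfloor t\rfloor\leq\lfloor s+t\rfloor$. None of these use the hypothesis $\delta\cdot B\sqsupseteq\Rect(\T{1})$.

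The content of the lemma resides in clauses (iv) and (vi), where the factor of $2$ in the ``$2\delta$'' hypothesis exists precisely to absorb the rounding error from passing between reals and integers. The hypothesis $\delta\cdot B\sqsupseteq\Rect(\T{1})$ is exactly the assertion $\delta\bdim_i(B)\geq 1$ for all $i\leq\ell$, and I would isolate the elementary inequality
\[\lceil y\rceil\leq\lfloor 2y\rfloor\qquad\text{whenever } y\geq 1,\]
proved by separating the cases $\{y\}<\tfrac12$ and $\{y\}\geq\tfrac12$. For (iv), applying this with $y=\delta\bdim_i(B)$ gives $\delta\bdim_i(B)\leq\lfloor 2\delta\bdim_i(B)\rfloor\leq\epsilon\bdim_i(A)$; multiplying by $\lambda$ and taking floors then yields $\bdim_i(\lambda\delta\cdot B)\leq\bdim_i(\lambda\epsilon\cdot A)$, which is what (iv) asserts.

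Clause (vi) is the only one requiring a genuine pointwise decomposition. Given $\T{c}+\T{v}\in(\lambda+\delta)\cdot B$ with $\T{c}$ the center of $B$, the plan is to split each of the first $\ell$ coordinates as $v_i=u_i+w_i$, where $u_i$ is $v_i$ truncated (with sign) into the interval $[-\lfloor\lambda\bdim_i(B)\rfloor,\lfloor\lambda\bdim_i(B)\rfloor]$ and $w_i$ is the remainder. The standard identity $\lfloor x+y\rfloor-\lfloor x\rfloor\leq\lceil y\rceil$ applied with $x=\lambda\bdim_i(B)$ and $y=\delta\bdim_i(B)\geq 1$, combined with the key inequality $\lceil y\rceil\leq\lfloor 2y\rfloor$, yields $|w_i|\leq\lfloor 2\delta\bdim_i(B)\rfloor\leq\bdim_i(A)$; the $(\ell+1)^{\text{st}}$ coordinate can be split arbitrarily in $\Gamma$. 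This exhibits $\T{c}+\T{v}$ as a sum of an element of $\lambda\cdot B$ and an element of $A$. The main — and only — obstacle throughout is the bookkeeping of floors; the conceptual point is recognizing that the hypothesis $\delta\cdot B\sqsupseteq\Rect(\T{1})$ together with the factor of $2$ is exactly what causes the rounding errors to disappear.
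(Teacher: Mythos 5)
Your proof is correct and follows essentially the same route as the paper: reduce each clause to a floor inequality on the radii $\bdim_i$, using $\delta b_i \geq 1$ together with a factor of $2$ to absorb rounding error in clauses (iv) and (vi). The only cosmetic differences are that the paper closes (vi) with the radius inequality $\lfloor\lambda b_i\rfloor + a_i \geq \lfloor(\lambda+\delta)b_i\rfloor$ plus concentricity (as in (v)) rather than your explicit pointwise decomposition, uses the bound $\lfloor\delta b_i\rfloor + 1 \leq \lfloor 2\delta b_i\rfloor$ in place of your $\lceil y\rceil\leq\lfloor 2y\rfloor$, and derives (vii) from (v) rather than directly.
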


We remark that clauses (iv) and (vi) do not give the optimal estimates, but will suffice for our purposes.

\begin{proof}
Recall that $A \sqsubseteq B$ if and only if $\bdim_i(A) \leq \bdim_i(B)$ for all $1 \leq i \leq \ell$. Set $\T{a} = \bdim(A)$ and $\T{b} = \bdim(B)$.

(i) -- (iii) are immediate from the definitions.

(iv). Using $\Rect(\T{1}) \sqsubseteq\delta\cdot B$, we have $\delta b_i < \lfloor \delta b_i + 1 \rfloor \leq \lfloor 2 \delta b_i \rfloor \leq \lfloor \epsilon a_i \rfloor \leq \epsilon a_i$. Therefore $\lambda \delta b_i < \lambda \epsilon a_i$ and hence $\lfloor \lambda \delta b_i \rfloor \leq \lfloor \lambda \epsilon a_i \rfloor$.

(v). We have $\lfloor \lambda b_i \rfloor + a_i \leq \lfloor \lambda b_i \rfloor + \lfloor \epsilon b_i \rfloor \leq \lfloor (\lambda + \epsilon) b_i \rfloor$. This shows that $\lambda \cdot B + A \sqsubseteq (\lambda + \epsilon) \cdot B$. Since $A$ is centered the former must be a subset of the latter.

(vi). Using $\Rect(\T{1}) \sqsubseteq \delta \cdot B$, we have $\lfloor \lambda b_i \rfloor + a_i \geq \lfloor \lambda b_i \rfloor + \lfloor 2 \delta b_i \rfloor \geq \lfloor \lambda b_i \rfloor + \lfloor \delta b_i + 1 \rfloor \geq \lfloor (\lambda + \delta) b_i \rfloor$. So as in the proof of (v) we obtain $(\lambda + \delta) \cdot B \subseteq \lambda \cdot B + A$ since $A$ is centered.

(vii) This follows from clause (v).
\end{proof}

Clauses (i) -- (iii), (v), and (vii) of Lemma \ref{LEM BASIC} will be used frequently throughout the paper. We will refrain from explicitly citing these clauses since they are quite intuitive (the reader will likely not even notice that they are needed) and, given how frequently we use them, it would be overly repetitive to cite them. Clause (iv) will also be used frequently without mention, but we will discuss this more at a later time (specifically after Definitions \ref{DEFN ROUGH} and \ref{DEFN RECTEQ}). We will explicitly mention any use of clause (vi). Before continuing we now highlight two additional facts that are somewhat technical but will play an important role in Section \ref{SEC FACES}.

\begin{lem} \label{LEM BASIC2}
Let $A, B \subseteq \Z^\ell \times \Gamma$ be rectangles, and let $\epsilon$, $\delta$, and $\eta$ be non-negative real numbers with $\delta<1$ and $\Rect(\T{1}) \sqsubseteq \delta \cdot B$.
\begin{enumerate}
\item[\rm (i)] If $A$ meets $B$ and $2\delta\cdot B\sqsubseteq\epsilon\cdot A$, then $(1+\epsilon)\cdot A$ meets $(1-\delta)\cdot B$.
\item[\rm (ii)] Suppose $A$ is centered, let $\T{w}\in(1+\delta)\cdot B$, and suppose that $\eta\cdot A\sqsubseteq (1-\delta)\cdot B$ and $4\delta\cdot B\sqsubseteq\epsilon\cdot A$. Then there exits $\T{s}\in (\eta+\epsilon)\cdot A$ such that $\T{w}+\T{s}+\eta\cdot A\subseteq (1-\delta)\cdot B$.
\end{enumerate}
\end{lem}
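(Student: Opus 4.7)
The plan is to prove both clauses by direct coordinate-wise manipulation, everywhere using the hypothesis $\Rect(\T{1})\sqsubseteq\delta\cdot B$ (i.e.\ $\delta b_i\ge 1$ for each $i$), which is what permits additive constants to be absorbed into multiplicative factors in floor-function estimates. Write $\T{a}=\bdim(A)$, $\T{b}=\bdim(B)$, and let $\T{c}_A,\T{c}_B$ denote the centers of $A$ and $B$.

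For clause (i), the idea is to take any $\T{p}\in A\cap B$ and shrink it toward $\T{c}_B$ to produce a point $\T{q}\in(1-\delta)\cdot B$. Define $\T{q}$ coordinate-wise: let $q_i=p_i$ if $|p_i-(c_B)_i|\le\lfloor(1-\delta)b_i\rfloor$, and otherwise let $q_i=(c_B)_i\pm\lfloor(1-\delta)b_i\rfloor$ with the sign of $p_i-(c_B)_i$. Since $b_i\in\Z$, one has $|q_i-p_i|\le b_i-\lfloor(1-\delta)b_i\rfloor=\lceil\delta b_i\rceil$, and $\delta b_i\ge 1$ forces $\lceil\delta b_i\rceil\le\lfloor 2\delta b_i\rfloor$. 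Now $2\delta\cdot B\sqsubseteq\epsilon\cdot A$ yields $|q_i-p_i|\le\lfloor\epsilon a_i\rfloor$, and combining with $\T{p}\in A$ and $a_i\in\Z$ puts $\T{q}$ into $(1+\epsilon)\cdot A$ as well.

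For clause (ii), I will construct $\T{s}$ coordinate-wise, taking $s_{\ell+1}=1_\Gamma$ (immaterial since the $\Gamma$-coordinate of a rectangle is unrestricted). For $1\le i\le\ell$ set $T_i=\lfloor(1-\delta)b_i\rfloor-\lfloor\eta a_i\rfloor$, which is nonnegative by $\eta\cdot A\sqsubseteq(1-\delta)\cdot B$, and choose $s_i$ to be whichever of $(c_B)_i+T_i-w_i$, $(c_B)_i-T_i-w_i$, or $0$ places $w_i+s_i-(c_B)_i$ in the interval $[-T_i,T_i]$. Then for any $\T{t}\in\eta\cdot A$ we have $|w_i+s_i+t_i-(c_B)_i|\le T_i+\lfloor\eta a_i\rfloor=\lfloor(1-\delta)b_i\rfloor$, so the containment $\T{w}+\T{s}+\eta\cdot A\subseteq(1-\delta)\cdot B$ is automatic by construction.

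The real work is bounding $|s_i|\le\lfloor(\eta+\epsilon)a_i\rfloor$ so that $\T{s}\in(\eta+\epsilon)\cdot A$. Using $\T{w}\in(1+\delta)\cdot B$, the nontrivial cases give $|s_i|\le\lfloor(1+\delta)b_i\rfloor-\lfloor(1-\delta)b_i\rfloor+\lfloor\eta a_i\rfloor$. The key estimate, using $b_i\in\Z$ and $\delta b_i\ge 1$, is
$$\lfloor(1+\delta)b_i\rfloor-\lfloor(1-\delta)b_i\rfloor \;=\; \lfloor\delta b_i\rfloor+\lceil\delta b_i\rceil \;\le\; 2\delta b_i+1 \;\le\; 4\delta b_i,$$
which, since the left side is an integer, forces it to be at most $\lfloor 4\delta b_i\rfloor$. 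The hypothesis $4\delta\cdot B\sqsubseteq\epsilon\cdot A$ then gives $\lfloor 4\delta b_i\rfloor\le\lfloor\epsilon a_i\rfloor$, and the superadditivity $\lfloor x\rfloor+\lfloor y\rfloor\le\lfloor x+y\rfloor$ finishes the bound. The main (and only) obstacle is this absorption estimate; the factor of $4$ in the hypothesis of (ii) rather than $2$ as in (i) is precisely the price of the $+1$ error appearing on both the upper and lower sides of $\T{w}$'s position inside $(1+\delta)\cdot B$.
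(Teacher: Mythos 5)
Your proof of both clauses is correct. It takes essentially the same coordinate-wise, floor-estimate approach as the paper but packages the steps somewhat differently. For clause (i), the paper works abstractly with the intersection criterion for rectangles (comparing the sum of radii against the distance between centers coordinate by coordinate) rather than constructing an explicit witnessing point $\T{q}$ as you do; the inequality it reduces to, $\lfloor\epsilon a_i\rfloor\ge\lceil\delta b_i\rceil$, is the same one you derive for the gap $|q_i-p_i|$. For clause (ii), the paper proceeds in two steps: it first invokes clause (vi) of Lemma~\ref{LEM BASIC} (which itself encapsulates exactly your absorption estimate) to produce $\T{u}\in\epsilon\cdot A$ with $\T{w}+\T{u}\in(1-\delta)\cdot B$, and then uses $\eta\cdot A\sqsubseteq(1-\delta)\cdot B$ to find a second shift $\T{v}\in\eta\cdot A$, setting $\T{s}=\T{u}+\T{v}$. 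You instead build $\T{s}$ in one pass, choosing each $s_i$ directly and bounding it against $\lfloor\epsilon a_i\rfloor+\lfloor\eta a_i\rfloor$. Your version is more self-contained (it does not lean on Lemma~\ref{LEM BASIC}(vi)), while the paper's is more modular and reuses machinery already established. Your remark about the factor $4$ in (ii) versus $2$ in (i) being the price of the two-sided $+1$ error is a correct diagnosis of why the hypotheses differ.
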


\begin{proof}
(i) Let $\T{c}$ be the center of $A$ and $\T{d}$ the center of $B$, and for each $1\leq i\leq\ell$ let $\rho_i=|c_i-d_i|$. Then $A$ meets $B$ if and only if $\bdim_i(A)+\bdim_i(B)\geq\rho_i$ for all $i$, and $(1+\epsilon)\cdot A$ meets $(1-\delta)\cdot B$ if and only if $\lfloor (1+\epsilon)\,\bdim_i(A)\rfloor+\lfloor(1-\delta)\,\bdim_i(B)\rfloor\geq\rho_i$ for all $i$.
Therefore, writing $a_i=\bdim_i(A)$ and $b_i=\bdim_i(B)$, we have that $(1+\epsilon)\cdot A$ meets $(1-\delta)\cdot B$ provided
$$a_i+b_i \ \leq \ \lfloor (1+\epsilon)a_i\rfloor +\lfloor (1-\delta)b_i\rfloor \ = \ a_i+b_i+\lfloor\epsilon a_i\rfloor +\lfloor -\delta b_i\rfloor$$
for all $i$, or in other words provided $\lfloor\epsilon a_i\rfloor\geq\lceil\delta b_i\rceil$ for all $i$. Since $\Rect(\T{1}) \sqsubseteq \delta \cdot B$ and $2\delta\cdot B\sqsubseteq\epsilon\cdot A$, for each $i$ we have $\lceil\delta b_i\rceil\leq\lfloor\delta b_i\rfloor+1\leq\lfloor 2\delta b_i\rfloor\leq\lfloor\epsilon a_i\rfloor$, as needed.

(ii) We have that $\epsilon\cdot A$ is centered and $\epsilon\cdot A\sqsupseteq 4\delta\cdot B$, so by clause (vi) of Lemma \ref{LEM BASIC}, $(1+\delta)\cdot B \subseteq (1-\delta)\cdot B+\epsilon\cdot A$. Thus we can find $\T{u}\in\epsilon\cdot A$ such that $\T{w}+\T{u}\in (1-\delta)\cdot B$. Now since $\eta\cdot A\sqsubseteq (1-\delta)\cdot B$ we can find a translate of $\eta \cdot A$ that contains $\T{w} + \T{u}$ and is contained in $(1 - \delta) \cdot B$. This implies that there is $\T{v}\in\eta\cdot A$ such that $\T{w}+\T{u}+\T{v}+\eta\cdot A\subseteq (1-\delta)\cdot B$. Now let $\T{s}=\T{u}+\T{v}\in\epsilon\cdot A+\eta\cdot A\subseteq (\epsilon+\eta)\cdot A$.
\end{proof}

Now having discussed the relevant geometry of abelian groups, we are ready to present a definition which will allow us to extend a coarse approximation of this geometry to nilpotent groups. The name for this notion draws inspiration from the theory of manifolds.

\begin{defn} \label{DEFN CHART}
Let $G$ be a countable group. A \emph{chart} for $G$ is a $5$-tuple $\Phi = (\ell, \phi, \rZ, \Gamma, \cH)$ where $\cH$ is a finite collection of pairwise conjugate subgroups of $G$, $\ell \in \N$, $\Gamma$ is a finite abelian group, $\rZ \subseteq \Z^\ell\times\Gamma$ is a centered rectangle with $\bdim_i(\rZ) > 0$ for each $1 \leq i \leq \ell$, $\phi$ is an injective function into $G$ with $\dom(\phi)$ a centered rectangle in $\Z^\ell \times \Gamma$ containing $3 \cdot \rZ$, $\phi(\T{0}) = 1_G$, and with the property that for every $\T{r}, \T{s} \in \dom(\phi)$ and every $H \in \cH$,
\begin{align*}
\phi(\T{r}) \cdot H = \phi(\T{s}) \cdot H & \ \Longrightarrow \ \T{r} = \T{s}; \\
\T{r} + \T{s} + \rZ \subseteq \dom(\phi) & \ \Longrightarrow \ \exists \T{z} \in \rZ \quad \phi(\T{r}) \cdot \phi(\T{s}) \cdot H = \phi(\T{r} + \T{s} + \T{z}) \cdot H; \\
\T{r} - \T{s} + \rZ \subseteq \dom(\phi) & \ \Longrightarrow \ \exists \T{z} \in \rZ \quad \phi(\T{r}) \cdot \phi(\T{s})^{-1} \cdot H = \phi(\T{r} - \T{s} + \T{z}) \cdot H; \\
-\T{r} + \T{s} + \rZ \subseteq \dom(\phi) & \ \Longrightarrow \ \exists \T{z} \in \rZ \quad \phi(\T{r})^{-1} \cdot \phi(\T{s}) \cdot H = \phi(-\T{r} + \T{s} + \T{z}) \cdot H; \\
-\T{s} + \rZ \subseteq \dom(\phi) & \ \Longrightarrow \ \exists \T{z} \in \rZ \quad \phi(\T{s})^{-1} \cdot H = \phi(-\T{s} + \T{z}) \cdot H.
\end{align*}
\end{defn}

We remark that the requirements that $\bdim_i(\rZ)$ be positive and that $\dom(\phi)$ be a centered rectangle are not essential; these assumptions simply allow for an easier presentation of our arguments. In the definition above, one should think of $\rZ$ as being a very small error term and $\dom(\phi)$ as being very large. With this mindset the last four implications say that $\phi$ is an ``almost-homomorphism.'' So in the presence of a chart one can pretend locally (since the image of $\phi$ is finite) that the action of $G$ on $G / H$, for $H \in \cH$, is an action of an abelian group; one just needs to pay a small price (the error term $\rZ$) whenever one uses abelian group operations. This definition will allow us to extend a coarse approximation of the abelian geometry used by Gao and Jackson \cite{GJ} to groups admitting charts. In particular, it will allow us to discuss rectangles and facial boundaries in spaces on which such groups act, and we do precisely this in Sections \ref{SEC RECT} and \ref{SEC FACES}, respectively.

Next we show that non-trivial charts exist for finitely generated nilpotent groups. As a warm-up we will first construct charts where $\cH$ consists simply of the trivial subgroup $\{1_G\}$. For notational convenience we write $\cH = \bOne$ in this case. We remark that charts in which $\cH = \bOne$ are the ones appropriate for handling free actions of $G$, and that we are forced to include $\cH$ in the definition of chart solely in order to handle non-free actions.

\begin{lem} \label{LEM EXIST}
Let $G$ be a finitely generated nilpotent group and let $\ell$ be the Hirsch length of $G$. Then for any finite $F \subseteq G$ and any $\lambda > 0$ there is a chart $\Phi = (\ell, \phi, \rZ, \Gamma, \bOne)$ for $G$ with $F \subseteq \phi(\rZ)$ and $\lambda \cdot \rZ \subseteq \dom(\phi)$.
\end{lem}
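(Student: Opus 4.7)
The plan is to use a Mal'cev-style parametrization of the finitely generated nilpotent group $G$ to define $\phi$, and then to choose the radii of $\rZ$ and of $\dom(\phi)$ by downward induction on coordinate index so that all of the almost-homomorphism clauses in Definition \ref{DEFN CHART} hold with room to spare. Since $\cH = \bOne$, the coset quantifiers in those clauses collapse and it suffices to control actual group products and inverses under $\phi$.

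First I would fix a cyclic central series $\{1_G\} = G_0 \lhd G_1 \lhd \cdots \lhd G_n = G$ with factors $G_i/G_{i-1}$ of prime or infinite order, arranged so that the $\ell$ infinite cyclic factors occupy the bottom indices $1, \ldots, \ell$ and the finite cyclic factors occupy indices $\ell+1, \ldots, n$; such a series can be obtained by suitably refining the upper central series. Choose elements $g_i \in G_i$ with $g_i G_{i-1}$ a generator, and set $\Gamma = \prod_{i > \ell} G_i/G_{i-1}$. Standard Mal'cev theory then yields a bijection $\Phi_0 \colon \Z^\ell \times \Gamma \to G$, $(\T{a},\gamma) \mapsto g_1^{a_1}\cdots g_\ell^{a_\ell}\cdot \tau(\gamma)$, for some fixed set-theoretic lift $\tau\colon \Gamma \to G$, with $\Phi_0(\T{0})=1_G$. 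The key structural point, which I would establish by induction on nilpotency class (or by direct appeal to the Hall--Petresco formulas), is that because the series is central the group operations pulled back through $\Phi_0$ take an upper-triangular polynomial form: there exist integer-valued polynomials such that for each $i \leq \ell$,
$$[\Phi_0^{-1}(\Phi_0(\T{a},\gamma)\,\Phi_0(\T{b},\delta))]_i \;=\; a_i + b_i + P_i(a_{i+1},\ldots,a_\ell,\gamma,b_{i+1},\ldots,b_\ell,\delta),$$
and analogous identities hold for $\Phi_0(\T{a},\gamma)^{-1}$, $\Phi_0(\T{a},\gamma)^{-1}\Phi_0(\T{b},\delta)$, and $\Phi_0(\T{a},\gamma)\Phi_0(\T{b},\delta)^{-1}$, with every correction depending only on strictly higher infinite coordinates and on finitely many $\Gamma$-values.

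Given $F$ and $\lambda$, set $C = \max(\lambda,3)$, and choose the radius vector $\T{z}=(z_1,\ldots,z_\ell)$ of $\rZ$ inductively in reverse order $i=\ell, \ell-1, \ldots, 1$, subject to three requirements: (a) $z_i \geq 1$; (b) $z_i$ dominates the $i$-th infinite coordinate of every element of $\Phi_0^{-1}(F)$; and (c) $z_i$ is at least the supremum, over all $\T{a}, \T{b}$ in the centered integer rectangle of radii $(Cz_1,\ldots,Cz_\ell)$ and all $\gamma,\delta\in\Gamma$, of the absolute value of the $i$-th correction polynomial arising in any of the four identities above. Requirement (c) is satisfiable because those correction polynomials involve only the strictly higher radii $z_{i+1},\ldots,z_\ell$ (already fixed) together with finitely many $\Gamma$-values, so the supremum is an explicit polynomial expression in quantities already selected. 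Set $\dom(\phi) = C\cdot \rZ$ and $\phi = \Phi_0\restriction \dom(\phi)$. Then $\phi(\T{0}) = 1_G$, $\phi$ is injective, $3\cdot\rZ \subseteq \dom(\phi)$, $\lambda\cdot\rZ \subseteq \dom(\phi)$, and $F \subseteq \phi(\rZ)$ all hold directly by construction, and each of the four almost-homomorphism clauses reduces, via the polynomial identities, to the statement that the relevant $i$-th correction polynomial takes values in $[-z_i, z_i]$, which is exactly clause (c).

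The main obstacle is the first step, namely establishing the upper-triangular polynomial form of the Mal'cev operations for finitely generated nilpotent groups (including the bookkeeping for the torsion part $\Gamma$) and verifying that the $i$-th correction is independent of $a_i$ and $b_i$. That independence, which is what makes the downward induction on $i$ close, comes from the fact that $g_i$ is central modulo $G_{i-1}$. Once the polynomial structure is in hand, the remainder of the argument is a routine quantitative exercise: choose the radii in the right order, scale up by $C$, and verify the clauses.
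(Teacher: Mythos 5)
Your approach is genuinely different from the paper's: you propose a single direct Mal'cev parametrization of $G$ together with a downward induction on coordinate radii, whereas the paper argues by induction on the nilpotency class, obtaining a chart for $G/\zeta(G)$ by the inductive hypothesis, lifting it to a map $\phi_0'$ into $G$, and then absorbing the resulting ``error'' into a finite subset $K$ of the center $\zeta(G)$, which is re-parametrized by a genuine isomorphism onto $\Z^{\ell_1}\times\Gamma_1$. What the paper's inductive route buys is that the error absorption at each stage is a pure finiteness argument --- $\phi_0'$ has finite domain, so its deviation from a homomorphism automatically lies in a finite central subset $K$ --- and one never needs the explicit Hall--Petrescu polynomial form of the Mal'cev coordinates. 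Your route is more explicit but depends on that polynomial structure theorem as a black box, which you yourself flag as the main unresolved obstacle.

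There is also a genuine gap in the step you treat as routine. You assert that a cyclic central series with all $\ell$ infinite cyclic factors at the bottom and all finite cyclic factors on top ``can be obtained by suitably refining the upper central series.'' This is false in general: refining each factor $\zeta_{i+1}G/\zeta_iG\cong\Z^{m_i}\times\Gamma_i$ into cyclic pieces yields a series whose factor types are interleaved (free factors of level $1$, then torsion of level $1$, then free factors of level $2$, and so on). For instance, if $G$ is the product of the discrete Heisenberg group with $\Z/p$, the refined upper central series has factor sequence $\Z,\Z/p,\Z,\Z$. A torsion-on-top cyclic central series does exist for this $G$, and indeed one can produce such a series in general by recursively extracting the free part of $\zeta(G/N)$ along a chain of normal subgroups $N$ and checking that the terminal quotient is finite, but your stated construction does not yield it. Alternatively you could keep the interleaved factors, since the correction polynomial for coordinate $i$ still involves only coordinates of strictly higher index (free or finite) and the finite ones are uniformly bounded; this works but requires extra bookkeeping for the carries produced when a finite-order generator $g_j$ has $g_j^{p_j}\ne 1_G$ in $G_{j-1}$. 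Either repair would restore your argument, but as written the central-series step is unjustified and is precisely the kind of structural issue the paper's induction on class is designed to sidestep.
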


\begin{proof}
Without loss of generality, we may assume that $\lambda \geq 3$. We prove the claim by induction on the nilpotency class of $G$. When the nilpotency class of $G$ is $1$ the group is abelian and the lemma clearly holds. Now suppose that the claim holds for all finitely generated nilpotent groups of nilpotency class $n-1$, and let $G$ be a finitely generated nilpotent group of nilpotency class $n$. Then $G / \zeta(G)$ is a finitely generated nilpotent group of nilpotency class $n-1$. So by induction there is a chart $\Phi_0 = (\ell_0, \phi_0, \rZ_0, \Gamma_0, \bOne)$ for $G / \zeta(G)$, where $\ell_0$ is the Hirsch length of $G / \zeta(G)$, satisfying $F\cdot \zeta(G) \subseteq \phi_0(\rZ_0)$ and $\lambda \cdot \rZ_0 \subseteq \dom(\phi_0) \subseteq \Z^{\ell_0} \times \Gamma_0$. Now let $\phi_0': \dom(\phi_0) \rightarrow G$ be any function satisfying $\phi_0'(\T{0}) = 1_G$ and $\phi_0'(\T{v}) \zeta(G) = \phi_0(\T{v})$ for all $\T{v} \in \dom(\phi_0)$. Since $\phi_0'$ has finite domain and is a chart modulo $\zeta(G)$, we can find a finite set $K \subseteq \zeta(G)$ such that $F \subseteq \phi_0'(\rZ_0) K$ and for every $\T{r}, \T{s} \in \dom(\phi_0')$,
\begin{align*}
\T{r} + \T{s} + \rZ_0 \subseteq \dom(\phi_0') & \ \Longrightarrow \ \phi_0'(\T{r}) \cdot \phi_0'(\T{s}) \in \phi_0'(\T{r} + \T{s} + \rZ_0) K; \\
\T{r} - \T{s} + \rZ_0 \subseteq \dom(\phi_0') & \ \Longrightarrow \ \phi_0'(\T{r}) \cdot \phi_0'(\T{s})^{-1} \in \phi_0'(\T{r} - \T{s} + \rZ_0) K; \\
-\T{r} + \T{s} + \rZ_0 \subseteq \dom(\phi_0') & \ \Longrightarrow \ \phi'_0(\T{r})^{-1} \cdot \phi_0'(\T{s}) \in \phi_0'(-\T{r} + \T{s} + \rZ_0) K; \\
-\T{s} + \rZ_0 \subseteq \dom(\phi_0') & \ \Longrightarrow \ \phi_0'(\T{s})^{-1} \in \phi_0'(-\T{s} + \rZ_0) K.
\end{align*}
As subgroups of finitely generated nilpotent groups are finitely generated, $\zeta(G)$ is a finitely generated abelian group and hence is isomorphic to some $\Z^{\ell_1} \times \Gamma_1$. Fix an isomorphism $\phi_1: \Z^{\ell_1} \times \Gamma_1 \rightarrow \zeta(G)$. Let $\rZ_1 \subseteq \Z^{\ell_1} \times \Gamma_1$ be any centered rectangle with $\phi_1(\rZ_1) \supseteq K$ and each $\bdim_i(\rZ_1)>0$. Set $\ell = \ell_0 + \ell_1$. Then $\ell$ is the Hirsch length of $G$. Set $\rZ = \rZ_0 \times \rZ_1$, set $\Gamma = \Gamma_0 \times \Gamma_1$, and define $\phi: \dom(\phi_0') \times \lambda \cdot \rZ_1 \rightarrow G$ by
$$\phi(\T{u}, \T{v}) = \phi_0'(\T{u}) \cdot \phi_1(\T{v})$$
for $\T{u} \in \dom(\phi_0')$ and $\T{v} \in \lambda \cdot \rZ_1$. Clearly $\phi(\T{0}) = 1_G$, $F \subseteq \phi_0'(\rZ_0) K \subseteq \phi(\rZ)$, and $\dom(\phi)$ is a centered rectangle in $\Z^\ell \times \Gamma$ containing $\lambda \cdot \rZ$. The injectivity of $\phi$ is readily verified since $\phi_0'$ is injective modulo $\zeta(G)$ and $\phi_1$ is an injective map into $\zeta(G)$. Most importantly, $\phi_1$ is the restriction of a legitimate homomorphism, and since its image is central, it does not interfere with the ``almost-homomorphism'' behavior of $\phi_0'$ described by the four implications above. Therefore $\Phi = (\ell, \phi, \rZ, \Gamma, \bOne)$ is a chart for $G$.
\end{proof}

The construction of charts with $\cH \neq \bOne$ is done in a similar fashion. Admittedly, working with subgroups of $G$ is a bit cumbersome notationally. As motivation, we point out that charts will ``absorb'' stabilizers appearing in non-free actions and therefore, through the eyes of an appropriate family of charts, every action is in some sense free. Thus charts will allow us to tackle both free and non-free actions of $G$ on a nearly equal footing. We record this in the following lemma whose proof is immediate. First it will help to introduce some notation that will be used repeatedly throughout the rest of the paper. If $G$ acts on $X$ and $\cH$ is a finite collection of pairwise conjugate subgroups of $G$, write
$$X^{\cH} \ := \ \{x\in X \: \Stab(x) \in \cH\}.$$

\begin{lem}
Let $G$ be a countable group, let $\Phi = (\ell, \phi, \rZ, \Gamma, \cH)$ be a chart for $G$, and let $G$ act on the set $X$. Then for every $\T{r}, \T{s} \in \dom(\phi)$ and every $x \in X^\cH$,
\begin{align*}
\phi(\T{r}) \cdot x = \phi(\T{s}) \cdot x & \ \Longrightarrow \ \T{r} = \T{s}; \\
\T{r} + \T{s} + \rZ \subseteq \dom(\phi) & \ \Longrightarrow \ \exists \T{z} \in \rZ \quad \phi(\T{r}) \cdot \phi(\T{s}) \cdot x = \phi(\T{r} + \T{s} + \T{z}) \cdot x; \\
\T{r} - \T{s} + \rZ \subseteq \dom(\phi) & \ \Longrightarrow \ \exists \T{z} \in \rZ \quad \phi(\T{r}) \cdot \phi(\T{s})^{-1} \cdot x = \phi(\T{r} - \T{s} + \T{z}) \cdot x; \\
-\T{r} + \T{s} + \rZ \subseteq \dom(\phi) & \ \Longrightarrow \ \exists \T{z} \in \rZ \quad \phi(\T{r})^{-1} \cdot \phi(\T{s}) \cdot x = \phi(-\T{r} + \T{s} + \T{z}) \cdot x; \\
-\T{s} + \rZ \subseteq \dom(\phi) & \ \Longrightarrow \ \exists \T{z} \in \rZ \quad \phi(\T{s})^{-1} \cdot x = \phi(-\T{s} + \T{z}) \cdot x.
\end{align*}
\end{lem}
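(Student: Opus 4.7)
The plan is to reduce each of the five implications directly to the corresponding implications in the definition of a chart (Definition \ref{DEFN CHART}) by using the fundamental relationship between stabilizers and the orbit map. The statement follows essentially word-for-word from the chart axioms once one makes this translation, which is why the authors call the proof immediate.

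First I would fix $x \in X^\cH$ and set $H = \Stab(x)$, so that $H \in \cH$ by definition of $X^\cH$. The central observation is then the elementary fact that for any $g, h \in G$,
\[
g \cdot x = h \cdot x \ \Longleftrightarrow \ h^{-1} g \in \Stab(x) = H \ \Longleftrightarrow \ g H = h H.
\]
In other words, the orbit map $G \to G \cdot x$ sending $g \mapsto g \cdot x$ factors through the quotient $G \to G/H$ and induces a bijection $G/H \to G \cdot x$. Thus any equality of the form $g \cdot x = h \cdot x$ in the action is equivalent to the equality $g H = h H$ of right cosets (here right in the sense that one thinks of representatives acting on $H$ from the left, matching the convention in Definition \ref{DEFN CHART}).

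Next I would verify each of the five implications in turn by applying the corresponding implication in the definition of a chart to the subgroup $H \in \cH$. For the first implication, if $\phi(\T{r}) \cdot x = \phi(\T{s}) \cdot x$, then by the observation above $\phi(\T{r}) H = \phi(\T{s}) H$, and the first bullet in Definition \ref{DEFN CHART} gives $\T{r} = \T{s}$. For the remaining four implications, under the respective hypotheses on the rectangles, Definition \ref{DEFN CHART} provides an element $\T{z} \in \rZ$ such that the equality of cosets (for example, $\phi(\T{r}) \cdot \phi(\T{s}) \cdot H = \phi(\T{r} + \T{s} + \T{z}) \cdot H$) holds in $G/H$, and again the observation translates this to the equality of orbit points (for example, $\phi(\T{r}) \cdot \phi(\T{s}) \cdot x = \phi(\T{r} + \T{s} + \T{z}) \cdot x$).

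There is no real obstacle here: the whole content of the lemma is that the five chart axioms, which were phrased in terms of cosets of subgroups $H \in \cH$, are precisely what is needed to make sense of $\phi$ as an ``almost-homomorphism'' with respect to \emph{any} action whose stabilizers happen to lie in $\cH$. The only thing to be careful about is the direction of multiplication: the definition uses right cosets $gH$, which corresponds to left actions on $X$ with $\Stab(x) = H$, so the translation is consistent. Since the entire argument is a direct substitution of $g \cdot x$ for $g H$, no additional calculation is needed.
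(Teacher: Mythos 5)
Your proof is correct and is exactly the intended argument: fix $x \in X^\cH$, set $H = \Stab(x) \in \cH$, observe that $g \cdot x = h \cdot x \Longleftrightarrow gH = hH$, and then each implication is a direct restatement of the corresponding chart axiom for $H$. The paper omits the proof as ``immediate'' precisely because this translation is all there is to it.
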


We now end the section with the construction of general charts.

\begin{lem} \label{LEM NFEXIST}
Let $G$ be a finitely generated nilpotent group, let $S \neq \varnothing$ be a finite collection of pairwise conjugate subgroups of $G$, let $F \subseteq G$ be finite, and let $\lambda, \eta > 0$. Then there is a chart $\Phi = (\ell, \phi, \rZ, \Gamma, \cH)$ for $G$ such that $\ell$ is at most the Hirsch length of $G$, $\lambda \cdot \rZ \subseteq \dom(\phi)$, $F \subseteq \phi(\rZ) H$ for every $H \in \cH$, and
$$\phi(\T{u}) H \phi(\T{u})^{-1} \in \cH \quad \text{for every } H \in S \text{ and } \T{u} \in \eta \cdot \rZ.$$
\end{lem}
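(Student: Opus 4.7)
The proof is by induction on the nilpotency class of $G$, following the template of Lemma \ref{LEM EXIST} but carrying the family $\cH$ through the construction. In the \emph{base case} $G$ is abelian, so pairwise conjugate subgroups coincide and $S = \{H\}$; set $\cH := S$. The quotient $G/H$ is finitely generated abelian, isomorphic to $\Z^\ell \times \Gamma$ for some $\ell$ at most the Hirsch length of $G$. Choose a set-theoretic section $\phi$ of $G \to G/H$ with $\phi(\T{0}) = 1_G$, take $\rZ$ centered with positive integer dimensions large enough that $F \subseteq \phi(\rZ) H$, and take $\dom(\phi) \supseteq \max(3, \lambda) \cdot \rZ$. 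All chart axioms modulo $H$ then hold trivially (with error $\T{z} = \T{0}$) because the deviations of $\phi$ from a homomorphism lie inside $H$, and the conjugation condition is vacuous since $G$ is abelian.

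For the \emph{inductive step}, let $G$ have nilpotency class $n \geq 2$, set $\bar G = G/\zeta(G)$ and $\bar S = \{H\zeta(G)/\zeta(G) : H \in S\}$ (pairwise conjugate in $\bar G$), and apply the inductive hypothesis to $\bar G$ with parameters $\bar S$, $\bar F = F \cdot \zeta(G)/\zeta(G)$, $\bar\lambda = \lambda$, and $\bar\eta = \eta$ to obtain a chart $\Phi_0 = (\ell_0, \phi_0, \rZ_0, \Gamma_0, \cH_0)$ for $\bar G$. Fix a lift $\phi_0' : \dom(\phi_0) \to G$ of $\phi_0$ sending $\T{0}$ to $1_G$. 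The crucial observation is that since $\zeta(G)$ is characteristic and all members of $S$ are conjugate in $G$, the subgroup $Z := \zeta(G) \cap H$ is the same for every $H \in S$ and, by the same argument, for every lift in $G$ of any $\bar H \in \cH_0$. Writing $\zeta(G)/Z \cong \Z^{\ell_1} \times \Gamma_1$, fix a set-theoretic section $\phi_1$ of $\zeta(G) \to \zeta(G)/Z$ with $\phi_1(\T{0}) = 1_G$. Then collect into a finite set $K \subseteq \zeta(G)$ the central parts of all the almost-homomorphism errors of $\phi_0'$ (modulo lifts of elements of $\cH_0$) together with the central discrepancies needed to upgrade $\bar F \subseteq \phi_0(\rZ_0) \bar H$ to $F \subseteq \phi_0'(\rZ_0) \phi_1(\rZ_1) H$. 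Choose $\rZ_1$ centered with positive integer dimensions so that $K \subseteq \phi_1(\rZ_1)$ and $\dom(\phi_1) \supseteq \lambda \cdot \rZ_1$, and set $\ell = \ell_0 + \ell_1$, $\Gamma = \Gamma_0 \times \Gamma_1$, $\rZ = \rZ_0 \times \rZ_1$, $\phi(\T{u},\T{v}) = \phi_0'(\T{u})\phi_1(\T{v})$, and $\cH = \{\phi_0'(\T{v}) H_0 \phi_0'(\T{v})^{-1} : H_0 \in S, \ \T{v} \in \eta \cdot \rZ_0\}$; the last set is pairwise conjugate in $G$ because each of its elements is conjugate to some member of $S$.

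The chart axioms for $(\ell, \phi, \rZ, \Gamma, \cH)$ are then checked as follows. The conjugation condition is immediate: since $\phi_1$ maps into $\zeta(G)$, we have $\phi(\T{u}) H_0 \phi(\T{u})^{-1} = \phi_0'(\pi(\T{u})) H_0 \phi_0'(\pi(\T{u}))^{-1} \in \cH$ whenever $H_0 \in S$ and $\T{u} \in \eta \cdot \rZ$, where $\pi$ projects to the first $\ell_0$ integer and the $\Gamma_0$ coordinates. Injectivity modulo any $H \in \cH$ reduces mod $\zeta(G)$ to the inductive injectivity of $\Phi_0$ in the $\phi_0'$-coordinates and then, in the central coordinate, to the injectivity of $\phi_1$ modulo $Z \subseteq H$. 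The almost-homomorphism relations are verified by commuting $\phi_1$-factors through $\phi_0'$-factors (permitted since $\phi_1$ maps into the center), applying the corresponding lifted relation for $\phi_0'$ modulo the appropriate lift of $\cH_0$, and absorbing the resulting central error into $\phi_1(\rZ_1)$ via $K$; any further error from $\phi_1$ failing to be a genuine homomorphism lies in $Z \subseteq H$ and is absorbed by $H$. For the Hirsch length bound, $\ell = \ell_0 + \ell_1 \leq \text{Hirsch}(\bar G) + \rnk(\zeta(G)/Z) \leq \text{Hirsch}(\bar G) + \rnk(\zeta(G)) = \text{Hirsch}(G)$.

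The \emph{main obstacle} compared with Lemma \ref{LEM EXIST} is that $\phi_1$ must now be injective modulo $H$ rather than modulo $\{1_G\}$, and the lifted almost-homomorphism errors of $\phi_0'$ must be absorbed modulo every $H \in \cH$ simultaneously rather than modulo the trivial subgroup. Both difficulties are resolved by the single observation that $Z = \zeta(G) \cap H$ is common to all $H \in \cH$, so that one section $\phi_1$ of $\zeta(G) \to \zeta(G)/Z$ and one finite error-catcher $K \subseteq \phi_1(\rZ_1)$ serve every conjugate at once, cleanly separating the ``genuine'' central error (landing in $K$) from the ``$Z$-part'' (absorbed by $H$).
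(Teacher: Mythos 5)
Your proof is correct and follows essentially the same route as the paper's: induction on nilpotency class, with the base case quotienting by $H$ itself, the inductive step passing to $G/\zeta(G)$ and defining $\cH$ by conjugating $S$ by $\phi_0'(\eta\cdot\rZ_0)$, and the central error absorbed via a section $\phi_1$ of $\zeta(G)\to\zeta(G)/(\zeta(G)\cap H)$ using that $\zeta(G)\cap H$ is constant on a conjugacy class. The paper reaches the section $\phi_1$ by reinvoking the base case with $S_1=\{H\cap\zeta(G)\}$ rather than constructing it directly, but this is a cosmetic difference only.
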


\begin{proof}
Without loss of generality, we may assume that $\lambda \geq \eta \geq 3$. We will prove the claim by induction on the nilpotency class of $G$. First suppose that $G$ has nilpotency class $1$. Then $G$ is abelian and $S$ is a singleton, say $S = \{H\}$. We have that $G / H$ is a finitely generated abelian group and is thus isomorphic to an abelian group of the form $\Z^\ell \times \Gamma$, where $\Gamma$ is a finite abelian group. Note that $\ell$ is at most the Hirsch length of $G$. Fix an isomorphism $\psi : \Z^\ell \times \Gamma \rightarrow G / H$. Let $\rZ \subseteq \Z^\ell \times \Gamma$ be a centered rectangle with $F\cdot H\subseteq \psi(\rZ)$ and each $\bdim_i(\rZ)>0$. Set $\cH = \{H\}$. Define $\phi:\lambda\cdot\rZ\to G$ so that $\phi(\T{0})=1_G$ and for $\T{0}\ne\T{r}\in\lambda\cdot\rZ$, $\phi(\T{r})$ is any element of $G$ with $\phi(\T{r})H=\psi(\T{r})$. The desired properties are not difficult to verify, and this completes the proof in this case.

Now suppose that $G$ is of nilpotency class $n$ and that the claim holds for all finitely generated nilpotent groups of nilpotency class less than $n$. Set
$$S_0 = \{H \zeta(G) / \zeta(G) \: H \in S\}.$$
By induction, there is a chart $\Phi_0 = (\ell_0, \phi_0, \rZ_0, \Gamma_0, \cH_0)$ for $G / \zeta(G)$ having the desired properties for $S_0$, $F \cdot \zeta(G)$, $\lambda$, and $\eta$. Set $\phi_0'(\T{0}) = 1_G$ and in general for $\T{0} \neq \T{r} \in \dom(\phi_0)$ let $\phi_0'(\T{r}) \in G$ be any group element with $\phi_0'(\T{r}) \zeta(G) = \phi_0(\T{r})$. Set
$$\cH = \{g H g^{-1} \: H \in S, \ g \in \phi_0'(\eta \cdot \rZ_0) \}.$$
Observe that
$$\{H \zeta(G) / \zeta(G) \: H \in \cH\} \ \subseteq \ \cH_0.$$
Since $\cH$ is finite and $\dom(\phi_0')$ is finite, there exists a finite set $B \subseteq \zeta(G)$ such that $F \subseteq \phi_0'(\rZ_0) B H$ for every $H \in \cH$ and such that for all $\T{r}, \T{s} \in \dom(\phi_0')$ and $H \in \cH$,
\begin{align*}
\T{r} + \T{s} + \rZ_0 \subseteq \dom(\phi_0') & \ \Longrightarrow \ \phi_0'(\T{r}) \cdot \phi_0'(\T{s}) \in \phi_0'(\T{r} + \T{s} + \rZ_0) \cdot B \cdot H; \\
\T{r} - \T{s} + \rZ_0 \subseteq \dom(\phi_0') & \ \Longrightarrow \ \phi_0'(\T{r}) \cdot \phi_0'(\T{s})^{-1} \in \phi_0'(\T{r} - \T{s} + \rZ_0) \cdot B \cdot H; \\
-\T{r} + \T{s} + \rZ_0 \subseteq \dom(\phi_0') & \ \Longrightarrow \ \phi_0'(\T{r})^{-1} \cdot \phi_0'(\T{s}) \in \phi_0'(-\T{r} + \T{s} + \rZ_0) \cdot B \cdot H; \\
-\T{s} + \rZ_0 \subseteq \dom(\phi_0') & \ \Longrightarrow \ \phi_0'(\T{s})^{-1} \in \phi_0'(-\T{s} + \rZ_0) \cdot B \cdot H.
\end{align*}
(It may help the reader to note that all four implications hold modulo $\zeta(G)$).

Pick any $H \in \cH$ and set $S_1 = \{H \cap \zeta(G)\}$, noting that for any $g \in G$ and $H \leq G$ we have $g H g^{-1} \cap \zeta(G) = H \cap \zeta(G)$. Since $\zeta(G)$ is finitely generated and abelian, it follows from the base case of the induction that there is a chart $\Phi_1 = (\ell_1, \phi_1, \rZ_1, \Gamma_1, \cH_1)$ for $\zeta(G)$ having the desired properties for $S_1$, $B$, $\lambda$, and $\eta$. Set $\ell = \ell_0 + \ell_1$, $\Gamma = \Gamma_0 \times \Gamma_1$, $\rZ = \rZ_0 \times \rZ_1$, and define
$$\phi(\T{r}_0, \T{r}_1) = \phi_0'(\T{r}_0) \cdot \phi_1(\T{r}_1)$$
for $(\T{r}_0, \T{r}_1) \in \dom(\phi) = \dom(\phi_0') \times \dom(\phi_1) \subseteq \Z^\ell \times \Gamma$. It only remains to check that $\Phi = (\ell, \phi, \rZ, \Gamma, \cH)$ has the desired properties. From the induction it is easy to see that $\ell$ is at most the Hirsch length of $G$ and that $\lambda \cdot \rZ \subseteq \dom(\phi)$. From the definition of $B$ and the fact that $B \subseteq \phi_1(\rZ_1)$ we see that $F \subseteq \phi(\rZ) H$ for every $H \in \cH$. The definition of $\cH$ and the fact that the image of $\phi_1$ is central imply that $\phi(\T{u}) H \phi(\T{u})^{-1} \in \cH$ for every $H \in S$ and $\T{u} \in \eta \cdot \rZ$. The four ``almost homomorphism'' properties of $\phi$ hold again due to the definition of $B$, the fact that $B \subseteq \phi_1(\rZ_1)$, and the fact that the image of $\phi_1$ is central. Finally, let $\T{r}_0, \T{s}_0 \in \dom(\phi_0')$, $\T{r}_1, \T{s}_1 \in \dom(\phi_1)$, and $H \in \cH$ be such that $\phi(\T{r}_0, \T{r}_1) \cdot H = \phi(\T{s}_0, \T{s}_1) \cdot H$. Then $\phi_0(\T{r}_0) \cdot H \zeta(G) / \zeta(G) = \phi_0(\T{s}_0)\cdot H \zeta(G) / \zeta(G)$ and hence $\T{r}_0 = \T{s}_0$. Therefore $\phi_1(\T{r}_1) \cdot H = \phi_1(\T{s}_1) \cdot H$ and hence $\phi_1(\T{s}_1)^{-1} \phi_1(\T{r}_1)$ lies in $H \cap \zeta(G) \in S_1$. So $\phi_1(\T{r}_1) \cdot (H \cap \zeta(G)) = \phi_1(\T{s}_1) \cdot (H \cap \zeta(G))$ and thus $\T{r}_1 = \T{s}_1$.
\end{proof}

The above two lemmas show that we can arrange for the domain of $\phi$ to be as large as desired relative to $\rZ$. In our arguments we will never require the domain of $\phi$ to be small, so the domain of $\phi$ is not a significant concern. We will state precise requirements on the domain of $\phi$ which are sufficient, but we generally refrain from explicitly checking that these requirements are indeed sufficient, since checking this at every step would make our arguments quite tedious to read. Nevertheless, we will at a few times comment on the sufficiency of the domain when the method of verification is not straightforward.

\section{Rough rectangles} \label{SEC RECT}

The main purpose of charts, and the motivation behind their definition, is that they enable us to see rough rectangles in spaces on which $G$ acts. This idea is stated precisely in the following definition. Recall that $X^{\cH} = \{ x \in X \: \Stab(x) \in \cH\}$.

\begin{defn} \label{DEFN ROUGH}
Let $G$ act on the set $X$, let $A \subseteq \Z^\ell \times \Gamma$ be a rectangle, let $\Phi = (\ell, \phi, \rZ, \Gamma, \cH)$ be a chart for $G$ with $2 \cdot A \subseteq \dom(\phi)$, and let $x\in X^\cH$ and $0 < \epsilon < 1$. A set $R \subseteq X$ is \emph{$(\Phi, \epsilon)$-roughly $A$ at $x$} if $2 \cdot \rZ \sqsubseteq \epsilon \cdot A$ and
$$\phi \Big( (1-\epsilon) \cdot A \Big) \cdot x \ \subseteq \ R \ \subseteq \ \phi \Big( (1+\epsilon) \cdot A \Big) \cdot x.$$
When the above conditions are satisfied we call $x$ a \emph{$(\Phi, A, \epsilon)$-base for $R$}. When we wish not to emphasize $x$ we simply say that $R$ is \emph{$(\Phi, \epsilon)$-roughly $A$}.
\end{defn}

It seems likely that if $G$ is nilpotent and not virtually abelian, then there are no ``rough-cubes'' in spaces on which $G$ acts freely. This may be surprising at first given Lemma \ref{LEM EXIST}. The obstruction is that although $\lambda \cdot \rZ \subseteq \dom(\phi)$, the maximum ratio $\bdim_i(\rZ) / \bdim_j(\rZ)$ ($1 \leq i \neq j \leq \ell$) will generally be greater than $\lambda$ if one follows the construction appearing in the proof of Lemma \ref{LEM EXIST}.

Notice that the requirement $2 \cdot \rZ \sqsubseteq \epsilon \cdot A$ implies $\lambda \cdot \rZ \sqsubseteq \lambda \epsilon \cdot A$ for all $\lambda > 0$ by clause (iv) of Lemma \ref{LEM BASIC}. We will use this fact freely without explicit mention of Lemma \ref{LEM BASIC}. Notice also that $x$ need not belong to the set $R\subseteq X$ for $R$ to be $(\Phi,\epsilon)$-roughly $A$ at $x$. Intuitively the intended description here is that from the vantage point of $x$ and through the chart $\Phi$, $R$ is approximately a rectangle.

The definition of chart says that the map $\phi$ behaves almost like a homomorphism from its domain in $\Z^\ell \times \Gamma$ into a coset space of $G$. With this mindset, intuitively one should believe that if $R$ is a $(\Phi, \epsilon)$-rough rectangle at $x$ and $y \in X^\cH$ is nearby, then $R$ should still be roughly rectangular at $y$. The following lemma confirms this fact.

\begin{lem} \label{LEM SHIFT}
Let $G$ act on the set $X$, let $\Phi = (\ell, \phi, \rZ, \Gamma, \cH)$ be a chart for $G$, let $R \subseteq X$ be $(\Phi, \epsilon)$-roughly $A$ at $x \in X^\cH$ with $\epsilon<\frac{1}{2}$, and let $M$ be a positive integer satisfying $(M+2) \cdot A + 4 \cdot \rZ \subseteq \dom(\phi)$. Let $y \in X^\cH$.
\begin{enumerate}
\item[\rm (i)] If $y = \phi(\T{u}) \cdot x$ and $\T{u} \in M \cdot A$, then $R$ is $(\Phi, 2\epsilon)$-roughly $A - \T{u}$ at $y$.
\item[\rm (ii)] If $x = \phi(\T{v}) \cdot y$ and $-\T{v} \in M \cdot A$, then $R$ is $(\Phi, 2\epsilon)$-roughly $A + \T{v}$ at $y$.
\end{enumerate}
\end{lem}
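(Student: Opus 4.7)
The plan is to verify, for each of part (i) and part (ii), the three defining conditions of $(\Phi,2\epsilon)$-roughness at $y$: namely $2\cdot\rZ\sqsubseteq 2\epsilon\cdot(A\mp\T{u})$, the domain requirement $2\cdot(A\mp\T{u})\subseteq\dom(\phi)$, and the two inclusion conditions. The first is immediate because translation does not alter the radius vector, so $\bdim(A\mp\T{u})=\bdim(A)$ and the hypothesis gives $2\cdot\rZ\sqsubseteq\epsilon\cdot A\sqsubseteq 2\epsilon\cdot A$. The second follows from a straightforward computation with the generous hypothesis $(M+2)\cdot A+4\cdot\rZ\subseteq\dom(\phi)$ and will be bundled with the domain verifications in the last paragraph.

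The heart of the argument, for part (i), uses the action-level almost-homomorphism properties of $\phi$ (the final lemma of Section~\ref{SEC GEOM}) to convert expressions based at $x$ into expressions based at $y$ via $y=\phi(\T{u})\cdot x$. For the inner inclusion I would take $\T{r}\in-\T{u}+(1-2\epsilon)\cdot A$ and apply the product form to obtain $\phi(\T{r})\cdot y=\phi(\T{r})\phi(\T{u})\cdot x=\phi(\T{r}+\T{u}+\T{z})\cdot x$ for some $\T{z}\in\rZ$. Since $\T{r}+\T{u}\in(1-2\epsilon)\cdot A$, and since $\rZ\sqsubseteq 2\cdot\rZ\sqsubseteq\epsilon\cdot A$ with $\rZ$ centered, clause~(v) of Lemma~\ref{LEM BASIC} places $\T{r}+\T{u}+\T{z}\in(1-\epsilon)\cdot A$, so $\phi(\T{r})\cdot y\in\phi((1-\epsilon)\cdot A)\cdot x\subseteq R$. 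For the outer inclusion I would apply the quotient form: given $\phi(\T{s})\cdot x\in R$ with $\T{s}\in(1+\epsilon)\cdot A$, write $\phi(\T{s})\cdot x=\phi(\T{s})\phi(\T{u})^{-1}\cdot y=\phi(\T{s}-\T{u}+\T{z})\cdot y$; clause~(v) then yields $\T{s}+\T{z}\in(1+2\epsilon)\cdot A$, so $\T{s}-\T{u}+\T{z}\in-\T{u}+(1+2\epsilon)\cdot A$ as required. Part (ii) is entirely symmetric, now using $x=\phi(\T{v})\cdot y$ and interchanging the roles of the product and quotient forms.

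The main obstacle is the domain bookkeeping, i.e.\ checking that every intermediate sum to which the almost-homomorphism property is applied actually lies in $\dom(\phi)$. Writing $A=\T{c}+\Rect(\T{a})$, a point $\T{u}\in M\cdot A$ satisfies $u_i=c_i+u_i'$ with $|u_i'|\leq Ma_i$, so that in the computations above the center contributions to $\T{s}-\T{u}$ (or $\T{r}+\T{u}$) cancel and the $i$-th coordinate of each intermediate sum is bounded by $(M+1+\epsilon)a_i+\bdim_i(\rZ)$. Since $\dom(\phi)$ is a centered rectangle, the hypothesis $(M+2)\cdot A+4\cdot\rZ\subseteq\dom(\phi)$ yields $\bdim_i(\dom(\phi))\geq|c_i|+(M+2)a_i+4\bdim_i(\rZ)$, which accommodates these bounds (and also the requirement $2\cdot(A\mp\T{u})\subseteq\dom(\phi)$ from the first paragraph) with room to spare. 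The same bookkeeping applies verbatim in part (ii) with $\T{v}$ in place of $-\T{u}$.
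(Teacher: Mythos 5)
Your proof is correct, and the core computation — apply the chart's product/quotient almost-homomorphism identities and then absorb the $\rZ$-error via clause (v) of Lemma \ref{LEM BASIC} — is the same as the paper's. The one organizational difference worth noting is in part (ii): the paper first applies the \emph{inverse} form of the chart to convert $x = \phi(\T{v}) \cdot y$ into $y = \phi(\T{u}) \cdot x$ for some $\T{u} \in -\T{v} + \rZ$, then proves a single $\T{z}$-parametrized family of inclusions $\phi((1-2\epsilon) \cdot A - \T{u} + \T{z}) \cdot y \subseteq R \subseteq \phi((1+2\epsilon) \cdot A - \T{u} + \T{z}) \cdot y$ for all $\T{z} \in \rZ$, and finally specializes $\T{z} = \T{0}$ for (i) and $\T{z} = \T{u}+\T{v}$ for (ii). You instead treat (ii) directly and symmetrically, using the quotient form at $x$ for the inner inclusion and the product form at $y$ for the outer, which avoids the slack variable and the inverse form entirely and is arguably cleaner; the price is that the two cases are argued separately rather than in one stroke. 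Both arguments are equally valid, and your domain-bookkeeping sketch is consistent with what the paper checks.
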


\begin{proof}
In case (ii) we have by the definition of chart that $y = \phi(\T{v})^{-1} \cdot x \in \phi(-\T{v} + \rZ) \cdot x$ since $-\T{v} + \rZ \subseteq \dom(\phi)$. So there is $\T{u} \in -\T{v} + \rZ \subseteq M \cdot A + \rZ$ with $y = \phi(\T{u}) \cdot x$. So in either case (i) or case (ii) there is $\T{u} \in M \cdot A + \rZ$ with $y = \phi(\T{u}) \cdot x$. In either case fix such a $\T{u}$, and let $\T{z}\in\rZ$ be arbitrary. From the definition of chart we obtain
\begin{equation}
\begin{array}{lll}
\phi \Big( (1 - 2 \epsilon) \cdot A - \T{u} + \T{z} \Big) \phi \Big( \T{u} \Big) \cdot x & \subseteq & \phi \Big( (1-2\epsilon) \cdot A - \T{u} + \T{u} + \T{z} + \rZ \Big) \cdot x \\ & \subseteq & \phi \Big( (1 - \epsilon) \cdot A \Big) \cdot x
\end{array}\label{EQ 1}
\end{equation}
and
\begin{equation}
\begin{array}{lll}
\phi \Big( (1 + \epsilon) \cdot A \Big) \phi \Big( \T{u} \Big)^{-1} \cdot y & \subseteq & \phi \Big( (1+\epsilon) \cdot A + \T{z} + \rZ \Big) \phi \Big( \T{u} \Big)^{-1} \cdot y \\ & \subseteq & \phi \Big( (1+\epsilon) \cdot A - \T{u} + \T{z} + 2 \cdot \rZ \Big) \cdot y \\ & \subseteq & \phi \Big( (1+2\epsilon) \cdot A - \T{u} + \T{z} \Big) \cdot y.
\end{array}\label{EQ 2}
\end{equation}
In appealing to the definition of chart, for the first inclusion in \eqref{EQ 1} we need that $\T{u}\in\dom(\phi)$, $(1-2\epsilon)\cdot A-\T{u}+\T{z}\subseteq\dom(\phi)$, and $(1-2\epsilon)\cdot A+\T{z}+\rZ\subseteq\dom(\phi)$, and for the second inclusion in \eqref{EQ 2} we need that $\T{u}\in\dom(\phi)$, $(1+\epsilon)\cdot A+\T{z}+\rZ\subseteq\dom(\phi)$, and $(1+\epsilon)\cdot A-\T{u}+\T{z}+2\cdot\rZ\subseteq\dom(\phi)$. Examining the last of these six conditions more closely, we see that
$$(1+\epsilon)\cdot A-\T{u}+\T{z}+2\cdot\rZ \ \subseteq \ 2\cdot A-M\cdot A+4\cdot\rZ \ \subseteq \ (M+2)\cdot C+4\cdot\rZ,$$
where $C$ is the centered translate of $A$. By definition $\dom(\phi)$ is a centered rectangle, so $(M + 2) \cdot A + 4 \cdot \rZ \subseteq \dom(\phi)$ implies $(M + 2) \cdot C + 4 \cdot \rZ \subseteq \dom(\phi)$. Given this observation, one easily checks that each of the six conditions above is implied by the condition $(M+2)\cdot A+4\cdot\rZ\subseteq\dom(\phi)$, and this explains the hypothesis on $\dom(\phi)$ in the statement of the lemma.

Now, from \eqref{EQ 1} we obtain
$$\phi \Big( (1-2\epsilon) \cdot A - \T{u} + \T{z} \Big) \cdot y \ \subseteq \ \phi \Big( (1-\epsilon) \cdot A \Big) \cdot x \ \subseteq \ R,$$
and from \eqref{EQ 2} we obtain
$$R \ \subseteq \ \phi \Big( (1+\epsilon) \cdot A \Big) \cdot x \ \subseteq \ \phi \Big( (1+2\epsilon) \cdot A - \T{u} + \T{z} \Big) \cdot y.$$
Claim (i) then follows from using $\T{z} = \T{0}$ and (ii) follows from using $\T{z} = \T{u} + \T{v}$ (the second sentence of this proof shows that indeed $\T{u}+\T{v} \in \rZ$).
\end{proof}

Lemma \ref{LEM SHIFT} implies that under mild assumptions a base for a rough rectangle $R$ can be translated to a base lying ``near the center" of $R$ at the expense of just a small error. More precisely, suppose the set $R\subseteq X^\cH$ is $(\Phi,\epsilon)$-roughly $A$ at $x \in X^\cH$ where $3\cdot A+4\cdot\rZ\subseteq\dom(\phi)$ and $\epsilon<\frac{1}{2}$. Let $\T{c}$ be the center of $A$ and $C$ the centered translate of $A$, so that $A=\T{c}+C$ and in particular $\T{c}\in A$. Then by Lemma \ref{LEM SHIFT}, $R$ is $(\Phi,2\epsilon)$-roughly $C$ at $\phi(\T{c})\cdot x$ where $\phi(\T{c})\cdot x$ belongs to $R$.

Equivalence relations in which every class meeting $X^\cH$ is roughly rectangular will play an important role in our arguments. These equivalence relations will have classes which are uniformly large rectangles (but also uniformly small relative to the domain of $\phi$) with uniformly small error. This is captured in the following definition.

\begin{defn} \label{DEFN RECTEQ}
Let $G$ act on the set $X$, let $\Phi = (\ell, \phi, \rZ, \Gamma, \cH)$ be a chart for $G$, let $0 < \epsilon < 1$, and let $A \subseteq \Z^\ell \times \Gamma$ be a centered rectangle. An equivalence relation $E$ on $X$ is \emph{$(\Phi, A, \epsilon)$-rectangular} if every class of $E$ not meeting $X^\cH$ is a singleton and for every class $U$ of $E$ meeting $X^\cH$ there is $ \delta > 0$ and a rectangle $B \subseteq \Z^\ell \times \Gamma$ such that $U$ is $(\Phi, \delta)$-roughly $B$ where
\begin{enumerate}
\item[\rm (i)] $A \sqsubseteq B$ (uniformly large rectangles);
\item[\rm (ii)] $2^{22\ell} \cdot B \subseteq \dom(\phi)$ (uniformly small rectangles relative to $\dom(\phi)$);
\item[\rm (iii)] $2 \delta \cdot B \sqsubseteq \epsilon \cdot A$ and hence $\delta \leq \epsilon / 2$ (uniformly small error).
\end{enumerate}
We say $E$ is \emph{$(\Phi, A, \epsilon)$-sub-rectangular} if $E$ contains a $(\Phi, A, \epsilon)$-rectangular sub-equivalence relation.
\end{defn}

We note that in the definition above we require $2 \delta \cdot B \sqsubseteq \epsilon \cdot A$ so that we may easily apply clause (iv) of Lemma \ref{LEM BASIC}. In the context of (sub-)rectangular equivalence relations we will use clause (iv) of Lemma \ref{LEM BASIC} with high frequency and will therefore refrain from explicitly citing it. Notice that the existence of a $(\Phi, A, \epsilon)$-rectangular equivalence relation has strong implications for $A$, $\epsilon$, and $\phi$. Specifically it implies that $2 \cdot \rZ \subseteq \epsilon \cdot A$ and $2^{22 \ell} \cdot A \subseteq \dom(\phi)$ (since $A$ is centered). Furthermore, we emphasize that if $E$ is $(\Phi,A,\epsilon)$-rectangular and $U$ is an $E$-class meeting $X^\cH$, then we can take $U$ to be $(\Phi,\delta)$-roughly $B$ with $\delta\leq\epsilon/2<\frac{1}{2}$. We shall make frequent use of this bound on $\delta$ without explicit mention.

One may notice that in the definition above, there is no requirement on where the $(\Phi, B, \delta)$-base for $U$ is (although by definition it must lie in $X^\cH$). This poses no difficulties as the lemma below shows that if $E$ is $(\Phi,A,\epsilon)$-rectangular and $x \in X^\cH$ is near enough some $E$-class $U$ that is $(\Phi,\delta)$-roughly $B$, then we can find $\T{v}\in\dom(\phi)$ such that $\phi(\T{v})\cdot x$ is a $(\Phi,B,\delta)$-base for $U$. Moreover $U$ will be $(\Phi,2\delta)$-roughly $B+\T{v}$ at $x$.

\begin{lem} \label{LEM BASE}
Let $G$ act on the set $X$, let $\Phi=(\ell,\phi,\rZ,\Gamma, \cH)$ be a chart for $G$, let $0<\epsilon<1$, let $A\subseteq\Z^\ell\times\Gamma$ be a centered rectangle, and let $E$ be a $(\Phi,A,\epsilon)$-rectangular equivalence relation on $X$. Let $U$ be an $E$-class meeting $X^\cH$, let $y \in X^\cH$, and suppose that $\phi(M \cdot A) \cdot y$ meets $U$, where $M \leq 2^{22 \ell} - 8$. Then there is $\T{v} \in \dom(\phi)$, a rectangle $B \subseteq \Z^\ell \times \Gamma$, and $\delta > 0$ such that $U$ is $(\Phi, \delta)$-roughly $B$ at $\phi(\T{v}) \cdot y \in X^\cH$, $A \sqsubseteq B$, $2 \delta \cdot B \sqsubseteq \epsilon \cdot A$, and $2^{22 \ell} \cdot B \subseteq \dom(\phi)$. Moreover $U$ is $(\Phi, 2 \delta)$-roughly $B + \T{v}$ at $y$.
\end{lem}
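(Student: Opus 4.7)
The plan is to extract from the $(\Phi,A,\epsilon)$-rectangularity of $E$ an existing base $x \in X^\cH$ for $U$ with respect to some rectangle $B$ and error $\delta$, then to use the almost-homomorphism properties of $\phi$ to move this base to a point of the form $\phi(\T{v}) \cdot y$, and finally to apply Lemma \ref{LEM SHIFT}(ii) to obtain the ``moreover'' statement at $y$.

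By the definition of $(\Phi,A,\epsilon)$-rectangular there exist $x \in X^\cH$, a rectangle $B \subseteq \Z^\ell \times \Gamma$, and $\delta > 0$ such that $U$ is $(\Phi, \delta)$-roughly $B$ at $x$, with $A \sqsubseteq B$, $2\delta \cdot B \sqsubseteq \epsilon \cdot A$, and $2^{22\ell} \cdot B \subseteq \dom(\phi)$. By the hypothesis that $\phi(M \cdot A) \cdot y$ meets $U$ we may pick $\T{u} \in M \cdot A$ with $\phi(\T{u}) \cdot y \in U \subseteq \phi((1+\delta) \cdot B) \cdot x$, and then write $\phi(\T{u}) \cdot y = \phi(\T{w}) \cdot x$ for some $\T{w} \in (1+\delta) \cdot B$.

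Since $x \in X^\cH$, the relevant almost-homomorphism clause of the chart yields $\T{z} \in \rZ$ with $y = \phi(\T{u})^{-1} \phi(\T{w}) \cdot x = \phi(-\T{u} + \T{w} + \T{z}) \cdot x$. Setting $\T{v}' = -\T{u} + \T{w} + \T{z}$ and then applying the inversion clause of the chart at $y \in X^\cH$, we obtain $\T{z}'' \in \rZ$ with $\phi(\T{v}')^{-1} \cdot y = \phi(-\T{v}' + \T{z}'') \cdot y$. Define $\T{v} = -\T{v}' + \T{z}''$; then $\phi(\T{v}) \cdot y = x \in X^\cH$, so $U$ is $(\Phi,\delta)$-roughly $B$ at $\phi(\T{v}) \cdot y$, which already delivers the first assertion of the lemma. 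At each invocation of the chart the domain condition takes the form $\Xi + \rZ \subseteq \dom(\phi)$ where $\Xi$ is a Minkowski sum of terms drawn from $M \cdot A$, $(1+\delta) \cdot B$, and $\rZ$; using $A \sqsubseteq B$, $\delta < 1/2$, and $2 \cdot \rZ \sqsubseteq B$, one sees that every such $\Xi + \rZ$ lies in $(M+4) \cdot B \subseteq 2^{22\ell} \cdot B \subseteq \dom(\phi)$ thanks to $M \leq 2^{22\ell} - 8$.

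For the ``moreover'' claim, apply Lemma \ref{LEM SHIFT}(ii) to the rough rectangle $U$ with rectangle $B$, error $\delta$, base $x$, target $y$, and shift $\T{v}$ (so $x = \phi(\T{v}) \cdot y$). From $-\T{v} = -\T{u} + \T{w} + \T{z} - \T{z}''$, together with $-\T{u} \in M \cdot A$ (since $A$ is centered), $\T{w} \in (1+\delta) \cdot B$, and $\T{z} - \T{z}'' \in 2 \cdot \rZ \sqsubseteq B$, a routine radius-by-radius estimate gives $-\T{v} \in (M+3) \cdot B$; and the domain condition $(M+5) \cdot B + 4 \cdot \rZ \subseteq \dom(\phi)$ follows from $4 \cdot \rZ \sqsubseteq B$ and $M + 6 \leq 2^{22\ell}$. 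Lemma \ref{LEM SHIFT}(ii) then concludes that $U$ is $(\Phi, 2\delta)$-roughly $B + \T{v}$ at $y$. The only real difficulty is the rectangle-arithmetic bookkeeping: the slack $M \leq 2^{22\ell} - 8$ is calibrated precisely to absorb the two chart invocations together with the subsequent application of Lemma \ref{LEM SHIFT}.
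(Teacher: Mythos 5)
Your proposal is correct and follows essentially the same strategy as the paper: extract $x$, $B$, $\delta$ from rectangularity, produce $\T{v}$ with $x=\phi(\T v)\cdot y$ via the chart's almost-homomorphism property, and then invoke Lemma~\ref{LEM SHIFT}(ii). The only difference is cosmetic: where the paper expresses $x=\phi(\T r)^{-1}\phi(\T s)\cdot y$ from the intersection and applies the third chart clause once directly at $y$, you first apply that clause at $x$ to write $y=\phi(\T v')\cdot x$ and then apply the inversion clause at $y$, costing one extra $\rZ$-perturbation that the slack $M\leq 2^{22\ell}-8$ comfortably absorbs.
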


\begin{proof}
Since $E$ is $(\Phi, A, \epsilon)$-rectangular, there exist $x \in X^\cH$, a rectangle $B$, and $\delta > 0$ such that $U$ is $(\Phi, \delta)$-roughly $B$ at $x$, $A \sqsubseteq B$, $2 \delta \cdot B \sqsubseteq \epsilon \cdot A$, and $2^{22 \ell} \cdot B \subseteq \dom(\phi)$. In particular $U \subseteq \phi((1 + \delta) \cdot B) \cdot x$ and therefore
$$\phi((1 + \delta) \cdot B) \cdot x \ \bigcap \ \phi(M \cdot A) \cdot y \ \neq \ \varnothing.$$
So there exist $\T{r} \in (1 + \delta) \cdot B$ and $\T{s} \in M \cdot A$ with $x = \phi(\T{r})^{-1} \phi(\T{s}) \cdot y$. Since $A$ is centered and $A \sqsubseteq B$, clause (v) of Lemma \ref{LEM BASIC} gives
$$-\T{r} + \T{s} + \rZ \ \subseteq \ M \cdot A - (1 + \delta) \cdot B + \rZ \ \subseteq \ M \cdot A - 2 \cdot B \ \subseteq \ -(M + 2) \cdot B.$$
Now $M + 2 < 2^{22 \ell}$ and $\dom(\phi)$ is a centered rectangle, so $2^{22 \ell} \cdot B \subseteq \dom(\phi)$ implies $-(M + 2) \cdot B \subseteq \dom(\phi)$. Thus by the definition of chart we can find $\T{v} \in M \cdot A - 2 \cdot B$ with $\phi(\T{v}) \cdot y = \phi(\T{r})^{-1} \phi(\T{s}) \cdot y$. Thus $x = \phi(\T{v}) \cdot y$. We have $- \T{v} \in (M + 2) \cdot B$ and
$$((M + 2) + 2) \cdot B + 4 \cdot \rZ \ \subseteq \ (M + 8) \cdot B \ \subseteq \ 2^{22 \ell} \cdot B \ \subseteq \ \dom(\phi)$$
and also $\delta \leq \epsilon / 2 < 1/2$, so by clause (ii) of Lemma \ref{LEM SHIFT} we conclude that $U$ is $(\Phi, 2 \delta)$-roughly $B + \T{v}$ at $y$.
\end{proof}

\section{Facial boundaries} \label{SEC FACES}

When working with a rectangular equivalence relation $E$, we want an unambiguous way of defining the ``facial boundaries'' of $E$. Such a definition must be somewhat delicate in order to overcome the rough nature of our rectangles. Furthermore, we want a definition that does not rely on picking, for each class $U$ of $E$ meeting $X^\cH$, a $B$, $\delta$, and $x$ such that $U$ is $(\Phi, \delta)$-roughly $B$ at $x$. This is achieved in the definition below. Recall that if $E$ is an equivalence relation on $X$ and $Y \subseteq X$ then we write $[Y]_E$ for the set of all $x \in X$ which are $E$-equivalent to some $y \in Y$. Also recall that for a rectangle $A \subseteq \Z^\ell \times \Gamma$, the sets $A^i - \bdim_i(A) \cdot \T{e}_i$ and $A^i + \bdim_i(A) \cdot \T{e}_i$ are the two faces of $A$ which are perpendicular to the coordinate vector $\T{e}_i$.

\begin{defn}
Let $G$ act on the set $X$, let $\Phi = (\ell, \phi, \rZ, \Gamma, \cH)$ be a chart for $G$, and let $E$ be an equivalence relation on $X$. For a centered rectangle $A \subseteq \dom(\phi) \subseteq \Z^\ell \times \Gamma$ and $1 \leq i \leq \ell$ we define the \emph{$(\Phi, i, A)$-boundary of $E$}, denoted $\partial_i^\Phi(E, A)$, to be the set of $x \in X^\cH$ such that
$$\Big[ \phi \big( A^i - \bdim_i(A) \cdot \T{e}_i \big) \cdot x \Big]_E \ \bigcap \ \Big[ \phi \big( A^i + \bdim_i(A) \cdot \T{e}_i \big) \cdot x \Big]_E \ = \ \varnothing.$$
\end{defn}

Notice that $\partial_i^\Phi(E, A) \subseteq X^\cH$ and also that $\partial_i^\Phi(E, A) \subseteq \partial_i^\Phi(F, A)$ whenever $F \subseteq E$ is a sub-equivalence relation of $E$. In the next lemma we show that boundaries behave well with respect to the property of $G$-clopenness discussed in Section \ref{SEC MARKER}.

\begin{lem} \label{LEM BNDCLOPEN}
Let $G$ act continuously on the Polish space $X$, let $\Phi = (\phi, \ell, \rZ, \Gamma, \cH)$ be a chart for $G$, and let $E$ be an equivalence relation on $X$. If $X^\cH$ is clopen and $E$ is $G$-clopen then $\partial_i^\Phi(E, A)$ is clopen for every centered rectangle $A \subseteq (\Z^\ell \times \Gamma) \cap \dom(\phi)$ and every $1 \leq i \leq \ell$.
\end{lem}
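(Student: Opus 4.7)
The plan is to reduce the question to finitely many conditions of the form ``$\phi(\T{r}) \cdot x$ is $E$-related to $\phi(\T{s}) \cdot x$'' and show each such condition defines a clopen set, by combining continuity of the action with $G$-clopenness of $E$.

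First I would unfold the definition. Writing $F^-_i = A^i - \bdim_i(A) \cdot \T{e}_i$ and $F^+_i = A^i + \bdim_i(A) \cdot \T{e}_i$, both finite subsets of $\dom(\phi)$, the set $\partial_i^\Phi(E, A)$ is precisely the set of $x \in X^\cH$ for which $\phi(\T{r}) \cdot x \not\mathrel{E} \phi(\T{s}) \cdot x$ for every $\T{r} \in F^-_i$ and every $\T{s} \in F^+_i$. Since $X^\cH$ is clopen by assumption and a finite intersection of clopen sets is clopen, it suffices to show that for each fixed $\T{r}, \T{s} \in \dom(\phi)$ the set
\[
C_{\T{r}, \T{s}} \ := \ \{x \in X \: \phi(\T{r}) \cdot x \mathrel{E} \phi(\T{s}) \cdot x\}
\]
is clopen.

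To handle $C_{\T{r}, \T{s}}$ I would set $g = \phi(\T{s}) \phi(\T{r})^{-1} \in G$ and observe, via the substitution $y = \phi(\T{r}) \cdot x$, that
\[
C_{\T{r}, \T{s}} \ = \ \phi(\T{r})^{-1} \cdot \{y \in X \: y \mathrel{E} g \cdot y\}.
\]
The set $\{y : y \mathrel{E} g \cdot y\}$ is clopen by the hypothesis that $E$ is $G$-clopen, and translation by $\phi(\T{r})^{-1}$ is a homeomorphism of $X$ because the action of $G$ is continuous. Hence $C_{\T{r}, \T{s}}$ is clopen, and the conclusion follows by intersecting finitely many such sets with $X^\cH$.

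There is no real obstacle here; the only thing one has to be careful about is that the $E$-saturations in the definition involve the entire classes and not only the finite sets $\phi(F^\pm_i) \cdot x$, but disjointness of two saturations is the same as saying no element of one set is $E$-related to any element of the other, which is a finite conjunction because $F^-_i$ and $F^+_i$ are finite. Once one makes that observation the proof reduces to the elementary computation above.
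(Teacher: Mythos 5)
Your proposal is correct and is essentially identical to the paper's own proof: both reduce the disjointness of the two $E$-saturations to a finite conjunction of conditions of the form $\phi(\T{r})\cdot x \not\mathrel{E} \phi(\T{s})\cdot x$, and both show each such condition is clopen by writing the corresponding set as a $G$-translate of a set of the form $E_g = \{y : y \mathrel{E} g\cdot y\}$. The only difference is notational.
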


\begin{proof}
Fix $1 \leq i \leq \ell$. Set $\T{a} = \bdim(A)$ and for $g \in G$ set $E_g = \{x \in X \: x \ E \ g \cdot x\}$. We have that $x \in \partial_i^\Phi(E, A)$ if and only if $x \in X^\cH$ and $\phi(a_i \T{e}_i + \T{u}) \cdot x$ and $\phi(-a_i \T{e}_i + \T{v}) \cdot x$ are $E$-inequivalent for every $\T{u}, \T{v} \in A^i$. Rewriting this, $x \in \partial_i^\Phi(E, A)$ if and only if
$$x \ \in \ X^\cH \setminus \phi(a_i \T{e}_i + \T{u})^{-1} \cdot E_{\phi(-a_i \T{e}_i + \T{v}) \phi(a_i \T{e}_i + \T{u})^{-1}}$$
for every $\T{u}, \T{v} \in A^i$. The above set is clopen and $A^i$ is finite, so the claim follows.
\end{proof}

Intuitively, when the classes of $E$ are roughly rectangular the set $\partial_i^\Phi(E, A)$ should be the union of the ``faces'' of $E$ that are perpendicular to $\T{e}_i$. The following two lemmas confirm this intuition by showing that boundaries are located close to where one would expect.

\begin{lem} \label{LEM FACES}
Let $G$ act on the set $X$, let $\Phi = (\ell, \phi, \rZ, \Gamma, \cH)$ be a chart for $G$, let $\epsilon > 0$, let $q$ satisfy $6 \epsilon < q < 1 - \epsilon$, and let $A \subseteq \Z^\ell \times \Gamma$ be a centered rectangle. Let $E$ be a $(\Phi, A, \epsilon)$-rectangular equivalence relation on $X$ and suppose the class $U$ of $E$ is $(\Phi, \delta)$-roughly $B$ at $x \in X^\cH$ where $A \sqsubseteq B$, $2^{22 \ell} \cdot B \subseteq \dom(\phi)$, and $2 \delta \cdot B \sqsubseteq \epsilon \cdot A$. If $\T{b} = \bdim(B)$ then for each $1\leq i\leq\ell$,
$$\partial_i^\Phi(E, q \cdot A) \cap U \ \subseteq \ \phi \Big( {-} b_i \cdot \T{e}_i + B^i + 2 q \cdot A \Big) \cdot x \cup \phi \Big( b_i \cdot \T{e}_i + B^i + 2 q \cdot A \Big) \cdot x.$$
\end{lem}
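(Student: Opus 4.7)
My plan is to argue by contrapositive. Fix $y \in U$ that avoids both $\phi(-b_i\cdot\T{e}_i + B^i + 2q\cdot A)\cdot x$ and $\phi(b_i\cdot\T{e}_i + B^i + 2q\cdot A)\cdot x$. I will show $y \notin \partial_i^\Phi(E, q\cdot A)$ by producing $\T{u}_\pm \in (q\cdot A)^i$ with $\phi(\T{u}_\pm \pm \lfloor qa_i\rfloor\T{e}_i)\cdot y \in U$; both translated sets in the definition of $\partial_i^\Phi$ then meet $U$, so their $E$-saturations coincide with $U$ and in particular intersect. Using that $U$ is $(\Phi,\delta)$-roughly $B$ at $x$ and that $\phi$ is injective modulo $\cH$, write $y = \phi(\T{w})\cdot x$ for a unique $\T{w} \in (1+\delta)\cdot B$, and let $\T{c}$ denote the center of $B$ and $a_j = \bdim_j(A)$.

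The first task is to extract a strong interior estimate on $w_i$. Unpacking the rectangular structure of $\pm b_i\T{e}_i + B^i + 2q\cdot A$, the hypothesis says that for each sign, either $|w_i - c_i \mp b_i| > \lfloor 2qa_i\rfloor$, or some perpendicular coordinate $j \neq i$ satisfies $|w_j - c_j| > b_j + \lfloor 2qa_j\rfloor$. The perpendicular alternative is ruled out: since $U$ is $(\Phi,\delta)$-roughly $B$ we have $2\cdot\rZ \sqsubseteq \delta\cdot B$, which combined with $\bdim_j(\rZ) \geq 1$ forces $\delta b_j \geq 2$; the hypotheses $q > 6\epsilon$ and $2\delta\cdot B \sqsubseteq \epsilon\cdot A$ then force $\lfloor 2qa_j\rfloor$ to far exceed $\delta b_j$, contradicting the bound $|w_j - c_j| \leq \lfloor(1+\delta) b_j\rfloor \leq b_j + \delta b_j$ coming from $\T{w} \in (1+\delta)\cdot B$. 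What remains, together again with $\T{w} \in (1+\delta)\cdot B$ and the same cascade, rules out the ``outside'' possibility $|w_i - c_i| > b_i$ and yields
\[ -b_i + \lfloor 2qa_i\rfloor \ < \ w_i - c_i \ < \ b_i - \lfloor 2qa_i\rfloor. \]

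With $w_i$ comfortably interior, I define $\T{u}_+$ by setting $(\T{u}_+)_i = 0$ and, for each $j \neq i$, choosing $(\T{u}_+)_j \in [-\lfloor qa_j\rfloor, \lfloor qa_j\rfloor]$ so that $(\T{u}_+)_j + (w_j - c_j)$ lands in $[-\lfloor(1-\delta) b_j\rfloor + \bdim_j(\rZ), \lfloor(1-\delta) b_j\rfloor - \bdim_j(\rZ)]$; this is possible because under the standing inequalities $\lfloor qa_j\rfloor$ comfortably exceeds $(1+\delta) b_j + \bdim_j(\rZ) - \lfloor(1-\delta) b_j\rfloor$. The $i$-th coordinate check uses $\lfloor 2qa_i\rfloor \geq 2\lfloor qa_i\rfloor$, $\bdim_i(\rZ) \leq \delta b_i/2$ (from $2\cdot\rZ \sqsubseteq \delta\cdot B$), and the dominance of $\lfloor qa_i\rfloor$ over $\delta b_i$ to show $|w_i - c_i + \lfloor qa_i\rfloor + z_i| \leq \lfloor(1-\delta) b_i\rfloor$ for every $z_i \in [-\bdim_i(\rZ), \bdim_i(\rZ)]$. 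Together these give $\T{u}_+ + \lfloor qa_i\rfloor\T{e}_i + \T{w} + \rZ \subseteq (1-\delta)\cdot B$. Since every intermediate vector lies inside a small constant multiple of $B$ and $2^{22\ell}\cdot B \subseteq \dom(\phi)$, the chart identity applies and produces some $\T{z}\in\rZ$ with
\[ \phi(\T{u}_+ + \lfloor qa_i\rfloor\T{e}_i)\cdot y \ = \ \phi\bigl(\T{u}_+ + \lfloor qa_i\rfloor\T{e}_i + \T{w} + \T{z}\bigr)\cdot x \ \in \ \phi((1-\delta)\cdot B)\cdot x \ \subseteq \ U. \]
The construction of $\T{u}_-$ is entirely symmetric.

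The main obstacle is purely bookkeeping: propagating the cascade of floor/integer inequalities among $\epsilon$, $\delta$, $q$, $\bdim(\rZ)$, $\bdim(A)$, and $\bdim(B)$ in the right order (so that the constant $6$ in $q > 6\epsilon$ is large enough to absorb every loss of a unit caused by flooring), and checking at each step that the intermediate vectors remain inside $\dom(\phi)$, which is comfortable because $2^{22\ell}\cdot B \subseteq \dom(\phi)$ while everything in sight is bounded by a small multiple of $B$.
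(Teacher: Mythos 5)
Your proof is correct, but it takes a genuinely different route from the paper's. The paper argues directly: given $y \in \partial_i^\Phi(E, q\cdot A) \cap U$, it writes $y = \phi(\T{t})\cdot x$, applies Lemma~\ref{LEM BASIC}(vi) to find a \emph{single} corner $\T{c}$ of $q\cdot A$ with $\T{t} + \T{c} \in (1-2\delta)\cdot B$, then invokes Lemma~\ref{LEM SHIFT} and the boundary hypothesis to force $\phi(\T{c} - 2j a_i \T{e}_i)\cdot y \notin U$, hence $\T{t} + \T{c} - 3j a_i \T{e}_i \notin B$; the fact that $\T{t}+ 2q\cdot A$ meets both $B$ and its complement along direction $i$ immediately pins $\T{t}$ near a face. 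You instead argue contrapositively: assuming $\T{w}$ avoids both face-neighborhoods, you derive a two-sided interior estimate on $w_i$ and then construct \emph{two} explicit witnesses $\T{u}_\pm$ in the opposite faces of $(q\cdot A)^i$, pushing directly through the chart identity (without Lemma~\ref{LEM SHIFT}) to land both $\phi(\T{u}_\pm \pm \lfloor q a_i \rfloor \T{e}_i)\cdot y$ in $\phi((1-\delta)\cdot B)\cdot x \subseteq U$, whence $y \notin \partial_i$. The tradeoff: the paper's one-corner argument is shorter and cleaner (one side suffices because a corner already lives in exactly one face), while yours is more explicit and never re-bases the rough rectangle, but pays for that with extra floor/ceiling bookkeeping across both faces and all perpendicular coordinates. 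Both rely on the same geometric picture, namely that $q\cdot A$ is tiny compared to $B$ yet large compared to $\rZ$ and $\delta\cdot B$, with $6\epsilon < q$ providing the slack; your estimates do check out (e.g., $2\cdot\rZ \sqsubseteq \delta\cdot B$ together with $2\delta\cdot B \sqsubseteq \epsilon\cdot A$ and $6\epsilon < q$ indeed give $\delta b_j < \lfloor 2q a_j\rfloor$ after noting $\epsilon a_j \geq 1$), and the domain checks are comfortable since everything sits inside a small multiple of $B$.
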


\begin{proof}
We remind the reader that $2 \delta \leq \epsilon$ (see the paragraph following Definition \ref{DEFN RECTEQ}). Fix $y \in \partial_i^\Phi(E, q \cdot A) \cap U \subseteq X^\cH$. As $U \subseteq \phi((1 + \delta) \cdot B) \cdot x$, we may pick $\T{t} \in (1 + \delta) \cdot B$ with $y = \phi(\T{t}) \cdot x$. Since $6 \delta \cdot B \sqsubseteq 6 \epsilon \cdot A \sqsubseteq q \cdot A$ and $q\cdot A$ is centered, clause (vi) of Lemma \ref{LEM BASIC} implies
$$(1 + \delta) \cdot B \ \subseteq \ (1 - 2 \delta) \cdot B + q \cdot A.$$
Therefore $\T{t} + q \cdot A$ must meet $(1 - 2 \delta) \cdot B$. In particular, since $q \cdot A \sqsubseteq (1 - 2 \delta) \cdot A \sqsubseteq (1 - 2 \delta) \cdot B$ there must be a corner vertex of $q \cdot A$, call it $\T{c}$, such that $\T{t} + \T{c} \in (1 - 2 \delta) \cdot B$. Let $\T{a} = \bdim(q \cdot A)$ and choose $j = \pm 1$ so that $\T{c} \in q \cdot A^i + j \cdot a_i \cdot \T{e}_i$. By Lemma \ref{LEM SHIFT} we have that $U$ is $(\Phi, 2 \delta)$-roughly $B - \T{t}$ at $y \in \partial_i^\Phi(E, q \cdot A) \subseteq X^\cH$. So $\phi(\T{c}) \cdot y \in \phi((1 - 2 \delta) \cdot B - \T{t}) \cdot y \subseteq U$. As $y \in \partial_i^\Phi(E, q \cdot A)$ and $\phi(\T{c}) \cdot y \in U$ we must have
$$\phi \Big( \T{c} - 2 j \cdot a_i \cdot \T{e}_i \Big) \cdot y \ \not\in \ U.$$
Thus $\T{t} + \T{c} \in (1 - 2 \delta) \cdot B$, but if we translate $\T{t}+\T{c}$ by $-2j\cdot a_i\cdot\T{e}_i$ we see that
$$\T{t} + \T{c} - 2 j \cdot a_i \cdot \T{e}_i \ \not\in \ (1 - 2 \delta) \cdot B.$$
Translating $\T{t}+\T{c}$ even farther, we claim that $\T{t} + \T{c} - 3 j \cdot a_i \cdot \T{e}_i$ is not even in $B$. For this it suffices to show that
$$b_i-\lfloor (1-2\delta) b_i\rfloor \ \leq \ a_i.$$ Using $2\cdot\rZ\sqsubseteq\delta\cdot B$ and $6\delta\cdot B\sqsubseteq q\cdot A$ we see that indeed
$$b_i-\lfloor (1-2\delta) b_i\rfloor \ = \ \lceil 2\delta b_i\rceil \ \leq \ \lfloor 3\delta b_i\rfloor \ < \ \lfloor 6\delta b_i\rfloor \ \leq \ a_i.$$
Therefore $\T{t}+\T{c} \in B$ and $\T{t} + (\T{c}-3j\cdot a_i\cdot\T{e}_i)\not\in B$, where $\T{c}-3j\cdot a_i\cdot \T{e}_i\in 2q\cdot A$.  Hence $\T{t} + 2q\cdot A$ meets both $B$ and its complement.  More specifically, since the points $\T{t} + \T{c}$ and $\T{t} + \T{c}-3j\cdot a_i\cdot\T{e}_i$ differ only in their $i$th coordinate, $\T{t}+2q\cdot A$
meets $(-b_i \cdot \T{e}_i + B^i) \cup (b_i \cdot \T{e}_i + B^i)$. It now follows from $A$ being centered that
$$\T{t} \ \in \ (-b_i \cdot \T{e}_i + B^i + 2 q \cdot A) \cup (b_i \cdot \T{e}_i + B^i + 2 q \cdot A).$$
This completes the proof since $y = \phi(\T{t}) \cdot x$.
\end{proof}

Below is the second lemma showing that boundaries occur near where one would expect. The reader may notice in this lemma that one minor shortcoming of our notion of boundary is that it only detects the highest dimensional boundaries of rough rectangles, failing in general to detect lower dimensional boundaries like corners or edges. For the most part, ignoring lower dimensional boundaries simplifies our arguments; however, it does make proving the lemma below more difficult.

\begin{lem} \label{LEM STRONG BND}
Let $G$ act on the set $X$, let $\Phi = (\ell, \phi, \rZ, \Gamma, \cH)$ be a chart for $G$, let $\epsilon > 0$, let $q$ satisfy $12 \epsilon < q < 1/ (24 \ell)$, and let $A \subseteq \Z^\ell \times \Gamma$ be a centered rectangle. Suppose $E$ is a $(\Phi, A, \epsilon)$-sub-rectangular equivalence relation on $X$. If $x \in X^\cH$ satisfies $\phi(15 \ell q \cdot A) \cdot x \subseteq X^\cH$ and $\phi(3 \cdot \rZ) \cdot x \not\subseteq [x]_E$, then there is $1 \leq i \leq \ell$ with $x \in \phi(30 \ell \cdot (q \cdot A)) \cdot \partial_i^\Phi(E, q \cdot A)$.
\end{lem}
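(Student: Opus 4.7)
I would argue by contradiction: suppose that for every $i \in \{1,\dots,\ell\}$ we have $x \notin \phi(30\ell \cdot (q\cdot A)) \cdot \partial_i^\Phi(E, q\cdot A)$, and aim for the contradiction $\phi(3\cdot\rZ)\cdot x \subseteq [x]_E$. Fix a $(\Phi,A,\epsilon)$-rectangular sub-equivalence relation $F \subseteq E$, set $U = [x]_F$, and apply Lemma \ref{LEM BASE} at $y = x$ with $M = 1$ to represent $U$ as $(\Phi, 2\delta)$-roughly $B' := B + \T{v}$ at $x$, with $A \sqsubseteq B$, $2\delta \cdot B \sqsubseteq \epsilon \cdot A$, and $2^{22\ell} \cdot B \subseteq \dom(\phi)$. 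Pick $\T{z}_0 \in 3\cdot\rZ$ witnessing $\phi(\T{z}_0)\cdot x \notin [x]_E \supseteq U$; then $\T{z}_0 \notin (1-2\delta) B'$, so in some coordinate $i$ one has $|(z_0)_i - c_i'| > \lfloor (1-2\delta) b_i'\rfloor$. Since $\bdim_i(3\cdot\rZ) \leq 3\delta b_i'/2$, this forces $|c_i'| > (1 - 7\delta/2) b_i' - 1$, placing $x$ extremely close to one $\T{e}_i$-face of $B'$; by the symmetry between $+\T{e}_i$ and $-\T{e}_i$ I may assume $c_i' < 0$, so $x$ lies within $\sim 7\delta b_i'/2$ of the $+\T{e}_i$-face of $B'$.

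Next I would construct $\T{t} \in 15\ell q \cdot A$, so that $p := \phi(\T{t})\cdot x$ lies in $X^\cH$ by hypothesis and $x \in \phi(30\ell\cdot(q\cdot A))\cdot p$ (since $15\ell q\cdot A \sqsubseteq 30\ell\cdot(q\cdot A)$, using $q \bdim_i(A) > 24$), such that:
(a) $\phi(-a_i\T{e}_i + (q\cdot A)^i)\cdot p \subseteq \phi((1-2\delta) B')\cdot x \subseteq U \subseteq [x]_E$; and
(b) $\phi(a_i\T{e}_i + (q\cdot A)^i)\cdot p \cap [x]_E = \varnothing$.
Condition (a) unpacks as $|t_j - c_j'| \leq (1-2\delta) b_j' - q \bdim_j(A)$ for $j \neq i$, and $t_i - a_i \in [c_i' - (1-2\delta) b_i', c_i' + (1-2\delta) b_i']$. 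The inequality $a_i = \bdim_i(q\cdot A) > 24\delta b_i'$ (derived from $q > 12\epsilon$ together with $2\delta \cdot B \sqsubseteq \epsilon \cdot A$) both allows (a) in the $i$-coordinate and lets me push $t_i + a_i$ past $c_i' + (1+2\delta)b_i'$, placing the $+\T{e}_i$-face of $q\cdot A$ at $p$ outside $(1+2\delta) B'\supseteq U$. The inequality $q < 1/(24\ell)$ provides the transverse budget so that (a) in coordinates $j \neq i$ is compatible with $|t_j| \leq 15\ell q \bdim_j(A)$. To upgrade "outside $U$" to "outside $[x]_E$" in (b), I would aim to land the entire $+\T{e}_i$-face inside the inner region $(1-2\delta_V) B_V'$ of $V := [\phi(\T{z}_0)\cdot x]_F$, where $V$ is represented as $(\Phi,2\delta_V)$-roughly $B_V'$ at $x$ via Lemma \ref{LEM BASE}; since $\phi(\T{z}_0)\cdot x \in V$ but $\phi(\T{z}_0)\cdot x \notin [x]_E$, the class $V$ is not $E$-equivalent to $U$ and so $V \cap [x]_E = \varnothing$.

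The contradiction then follows at once: the standing assumption gives $p \notin \partial_i^\Phi(E, q\cdot A)$, so the $E$-saturations of the two $i$-faces at $p$ meet; by (a) the $-\T{e}_i$-face's saturation is $[x]_E$, so some $+\T{e}_i$-face point lies in $[x]_E$, contradicting (b). The main obstacle is the "outside $[x]_E$" half of (b): fitting the entire $+\T{e}_i$-face into $V$'s interior while also meeting (a) and $\T{t} \in 15\ell q\cdot A$. This demands quantitative control of $|c_j' - c_j^V|$ transverse to $i$, i.e.\ an "adjacency" of $V$ and $U$ across the specific face of $B'$ identified in the first paragraph, which I expect to extract from Lemma \ref{LEM BASIC2} combined with the disjointness $U \cap V = \varnothing$ and the smallness of $\T{z}_0 \in 3\cdot\rZ$. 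If a single-$\T{t}$ construction proves inadequate for general positions of $V$, a short iteration that repeatedly absorbs adjacent $F$-classes into $[x]_E$ through (a) will close the gap, since the distance from $x$ to $\phi(\T{z}_0)\cdot x$ is at most $\bdim(3\cdot\rZ) \ll \bdim(A)$ and each iteration enlarges $[x]_E$ by a full $F$-class of dimensions $\geq \bdim(A)$.
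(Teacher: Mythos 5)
Your overall strategy is in the right spirit and matches the skeleton of the paper's argument: start from a rectangular sub-relation $F \subseteq E$, use the non-near-boundary hypothesis at $x$ to propagate $E$-equivalence across a face between adjacent $F$-classes, and contradict $\phi(\T{z}_0) \cdot x \notin [x]_E$. However, the single-$\T{t}$ construction at the heart of your argument has a genuine gap that you flag but do not close, and the paper's proof is precisely the machinery needed to close it.

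The problem is "diagonal adjacency." You choose $i$ purely from the position of the origin relative to $B'$, but the target class $V = [\phi(\T{z}_0) \cdot x]_F$ need not be reached from $U$ by crossing a face in direction $i$. Since $\T{z}_0$ is tiny, the origin can sit near a \emph{corner} of $(1 - 2\delta) B'$ (exiting it simultaneously in several coordinates), and $V$ can be a class lying diagonally across that corner. In that configuration your conditions (a) and (b) are incompatible in a transverse coordinate: to keep the $-\T{e}_i$-face of $q \cdot A$ at $p$ inside $U$ you must push $p_j$ to one side of the corner, while to keep the $+\T{e}_i$-face inside $V$'s inner region you must push $p_j$ to the other side, and these inequalities cannot both hold. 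Relaxing (b) to merely ``the $+\T{e}_i$-face avoids $[x]_E$'' does not save the step either, because moving in direction $i$ lands the far face in some other $F$-class $W \ne V$ about whose relation to $[x]_E$ you know nothing. Also, Lemma \ref{LEM BASIC2} only lets you fit small translates inside a rough rectangle; it says nothing about how two disjoint rough rectangles abut one another, so it will not deliver the ``quantitative control of $|c_j' - c_j^V|$'' you hope to extract from it.

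Your concluding sentence correctly anticipates the fix --- iterate, absorbing face-adjacent $F$-classes one at a time --- but this is the entire technical content of the lemma, not a loose end. The paper carries it out as an $\ell$-stage induction on a property $P(j)$: in stage $j$ it changes (essentially) only the $j$-th coordinate, first making two small corrective shifts $\T{s}, \T{s}'$ (each $O(q \cdot A)$) so that the starting and ending points are in the deep interior $(1 - 3\eta) \cdot C$, $(1 - 3\eta') \cdot C'$ of their $F$-classes, then arguing along the direct path that any boundary crossing must occur through a point $\T{u}$ lying in $(1 + 2\eta) \cdot C \cup (1 + 2\eta') \cdot C'$, at which the non-near-boundary hypothesis forces the two classes into the same $E$-class. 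The budgets $15\ell q \cdot A$ and $30\ell \cdot (q \cdot A)$ in the statement are exactly what is needed to absorb the $O(q a_j)$ drift accumulated over all $\ell$ stages plus the initial offset $(6\ell + 2) q \cdot A$ of the target $\T{v}$. So the iteration is neither short nor routine, and your write-up as it stands does not constitute a proof.
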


\begin{proof}
We remark that we use $30 \ell \cdot (q \cdot A)$ instead of $30 \ell q \cdot A$ for a minor technical reason that will be made clear in Section \ref{SEC ORTHO} (see for example Definition \ref{DEFN ORTHO}). In fact we will show that there is $1\leq i\leq\ell$ with $x\in\phi(15\ell q\cdot A)\cdot\partial_i^{\Phi}(E,q\cdot A)$.  This suffices since $15 \ell q \cdot A \subseteq 30 \ell \cdot (q \cdot A)$ by the following computation. Set $\T{a} = \bdim(A)$. As $\Rect(\T{1}) \sqsubseteq \rZ \sqsubseteq \epsilon \cdot A \sqsubseteq q \cdot A$, we have that
$$\lfloor 15 \ell q a_i \rfloor \ \leq \ 15 \ell \lfloor q a_i \rfloor + 15 \ell \ \leq \ 15 \ell \lfloor q a_i \rfloor + 15 \ell \lfloor q a_i \rfloor \ = \ 30 \ell \lfloor q a_i \rfloor.$$

Assume that $\phi(15 \ell q \cdot A) \cdot x \subseteq X^\cH$ and that, for every $1 \leq i \leq \ell$, $x \not\in \phi(15 \ell q \cdot A) \cdot \partial_i^\Phi(E, q \cdot A)$. It suffices to show that $\phi(3 \cdot \rZ) \cdot x \subseteq [x]_E$. Let $F$ be a $(\Phi, A, \epsilon)$-rectangular sub-equivalence relation of $E$, and let $V$ be an $F$-class with $V \cap \phi(3 \cdot \rZ) \cdot x \neq \varnothing$. It will suffice to show that $V \subseteq [x]_E$. We have that $V$ meets $X^\cH$. So using Lemma \ref{LEM BASE}, fix $\T{t}$, $B$, and $\delta$ such that $V$ is $(\Phi, 2 \delta)$-roughly $B + \T{t}$ at $x$, where $A \sqsubseteq B$, $2 \delta \cdot B \sqsubseteq \epsilon \cdot A$, and $2^{22 \ell} \cdot B \subseteq \dom(\phi)$. Let $\T{v} \in \Z^\ell \times \{1_\Gamma\}$ be the vector with smallest coordinates, in absolute value, satisfying
$$\T{v} + 6 \ell q \cdot A \ \subseteq \ (1 - 2 \delta) \cdot B + \T{t}.$$
Such a $\T{v}$ exists since
$$6 \ell q \cdot A \ \sqsubseteq \ (1/2) \cdot A \ \sqsubseteq \ (1 - 2 \delta) \cdot A \ \sqsubseteq \ (1 - 2 \delta) \cdot B.$$
Notice that since $\phi(3 \cdot \rZ) \cdot x$ meets $V$ and $3 \cdot \rZ \subseteq 3 \epsilon \cdot A \subseteq q \cdot A$, we have that $q\cdot A$ meets $(1 + 2 \delta) \cdot B + \T{t}$. As $8 \delta \cdot B \sqsubseteq 8 \epsilon \cdot A \sqsubseteq q \cdot A$, it therefore follows from Lemma \ref{LEM BASIC2}(i) that $2 q \cdot A$ meets $(1 - 2 \delta) \cdot B + \T{t}$. Hence we see that $\T{v} \in (6 \ell + 2) q \cdot A$.

The idea now is to travel from $x = \phi(\T{0}) \cdot x$ to a point $y=\phi(\T{w})\cdot x$ in $\phi(\T{v}+6\ell q\cdot A)\cdot x\subseteq V$ without leaving the $E$-class of $x$. We do this in $\ell$-stages. In each stage we change (primarily) a single coordinate $1 \leq i \leq \ell$. In changing a single coordinate, we want to pass through $q \cdot A$-facial boundaries of $F$ whenever we change $F$-classes. Since $x$ is not near any facial boundaries of $E$, this will imply that the two $F$-classes we pass between are contained in the same $E$-class. In order to pass through the facial boundaries of $F$ we are required to slightly perturb all other coordinates by at most $6 q a_i$. The vector $\T{v}$ was chosen so that after accounting for the cumulative effect of these perturbations from all $\ell$ stages we will still end up with a vector $\T{w}$ in $(1 - 2 \delta) \cdot B + \T{t}$ and thus $\phi(\T{w})\cdot x\in V$. The construction is completed inductively. For $0 \leq j \leq \ell$ consider the following property $P(j)$: there is $\T{w} \in \Z^\ell \times \Gamma$ with
\begin{enumerate}
\item[\rm (i)] $- 6 j q a_i \leq w_i - v_i \leq 6 j q a_i$ for $1 \leq i \leq j$;
\item[\rm (ii)] $-6 j q a_i \leq w_i - 0 \leq 6 j q a_i$ for $j < i \leq \ell$;
\item[\rm (iii)] $\phi(\T{w}) \cdot x \ E \ x$.
\end{enumerate}
By using $\T{w} = \T{0}$ we readily see that $P(0)$ is true. We will prove by induction that $P(j)$ is true for all $0 \leq j \leq \ell$. Then the vector $\T{w}$ witnessing the truth of $P(\ell)$ will have the property that
$$\T{w} \ \in \ \T{v} + 6 \ell q \cdot A \ \subseteq \ (1 - 2 \delta) \cdot B + \T{t}$$
(by clause (i)) and thus $\phi(\T{w}) \cdot x \in V$ but also $\phi(\T{w}) \cdot x \ E \ x$ (by clause (iii)), thus proving that $V \subseteq [x]_E$.

Assume $P(j-1)$ is true, and let $\T{w}$ witness the truth of $P(j-1)$. We will find $\T{w}'$ satisfying $P(j)$. We first make a small adjustment to $\T{w}$. Let $W$ be the $F$-class containing $\phi(\T{w}) \cdot x$. Note that by clauses (i) and (ii) of $P(j-1)$ we have $\T{w} \in (12 \ell + 2) q \cdot A \subseteq 15 \ell q \cdot A$. Thus $\phi(\T{w}) \cdot x \in X^\cH$ and so $W$ meets $X^\cH$. Using Lemma \ref{LEM BASE}, say that $W$ is $(\Phi, 2 \eta)$-roughly $C$ at $x$, where $A \sqsubseteq C$ and $2 \eta \cdot C \sqsubseteq \epsilon \cdot A$. So we have $\T{w} \in (1 + 2 \eta) \cdot C \subseteq (1+3\eta)\cdot C$. Since
$$12 \eta \cdot C \ \sqsubseteq \ 12 \epsilon \cdot A \ \sqsubseteq \ q \cdot A$$
$$\text{and} \quad 3 q \cdot A \ \sqsubseteq \ (1/2) \cdot A \ \sqsubseteq \ (1 - 3 \eta) \cdot A \ \sqsubseteq \ (1 - 3 \eta) \cdot C$$
we may apply Lemma \ref{LEM BASIC2}(ii) to obtain $\T{s} \in 4 q \cdot A \cap (\Z^\ell \times \{1_\Gamma\})$ such that
$$\T{w} + \T{s} + 3 q \cdot A \ \subseteq \ (1 - 3 \eta) \cdot C.$$
Fix such an $\T{s}$. This is our first small adjustment to $\T{w}$. Next we make a substantial change to the $j^\text{th}$-coordinate in order to achieve clause (i). Specifically, let
$$d \ = \ v_j-w_j-s_j \ \in \ \Z.$$ By clause (ii) of $P(j-1)$ and the bound on the size of $\T{v}$ observed in the first paragraph, we have
$$|d| \ \leq \ (6 \ell + 2 + 6(j-1) + 4) q a_j \ \leq \ 12\ell qa_j \ \leq \ \mbox{$\frac{1}{2}$} a_j.$$
Finally, we make one more small adjustment. Let $W'$ be the $F$-class containing $\phi(\T{w} + \T{s} + d \cdot \T{e}_j) \cdot x$. We again observe that $\T{w} + \T{s} + d \cdot \T{e}_j$ satisfies clauses (i) and (ii) of $P(j)$ and hence
$$\phi(\T{w} + \T{s} + d \T{e}_j) \cdot x \ \in \ \phi(15 \ell q \cdot A) \cdot x \ \subseteq \ X^\cH.$$
Again using Lemma \ref{LEM BASE}, say that $W'$ is $(\Phi, 2 \eta')$-roughly $C'$ at $x$, where $A \sqsubseteq C'$ and $2 \eta' \cdot C' \sqsubseteq \epsilon \cdot A$. As before, since $12 \eta' \cdot C' \sqsubseteq q \cdot A$ and $q \cdot A \sqsubseteq (1 - 3 \eta') \cdot C'$, by Lemma \ref{LEM BASIC2}(ii) we can find $\T{s}' \in 2 q \cdot A \cap (\Z^\ell \times \{1_\Gamma\})$ satisfying
$$\T{w} + \T{s} + d \cdot \T{e}_j + \T{s}' + q \cdot A \ \subseteq \ (1 - 3 \eta') \cdot C'.$$
We set $\T{w}' = \T{w} + \T{s} + d \T{e}_j + \T{s}'$. Clauses (i) and (ii) of $P(j)$ are immediately satisfied for coordinates $i \neq j$ since $\T{s} + \T{s}' \in 6 q \cdot A$. For the $j^\text{th}$ coordinate the definition of $d$ gives $w_j + s_j + d + s_j' = v_j + s_j'$. Since $\T{s}' \in 2 q \cdot A$, we see that clauses (i) and (ii) of $P(j)$ hold. We verify clause (iii) of $P(j)$ in the next paragraph.

Set $\T{r} = \T{w} + \T{s} + \T{s}'$ and $\T{r}' = \T{r} + d \T{e}_j = \T{w}'$. Our construction guarantees that $\phi(\T{r}) \cdot x$ is $E$ (in fact $F$) equivalent to $\phi(\T{w}) \cdot x$, and $\phi(\T{r}') \cdot x$ is $E$-equivalent (in fact equal) to $\phi(\T{w}') \cdot x$. By assumption we have $\phi(\T{w}) \cdot x \ E \ x$. So it suffices to prove that $\phi(\T{r}) \cdot x$ and $\phi(\T{r}') \cdot x$ are $E$-equivalent. Notice that by clauses (i) and (ii) of $P(j)$ and $P(j-1)$ we have
$$\T{r}, \T{r}' \in (12 \ell + 2) q \cdot A.$$
So if $\T{u}$ lies on the direct path between $\T{r}$ and $\T{r}'$ then $\T{u}$ is contained in the rectangle $(12 \ell + 2) q \cdot A$ by convexity and we have
$$x \ = \ \phi(\T{u})^{-1} \phi(\T{u}) \cdot x \ \in \ \phi((12 \ell + 2) q \cdot A + \rZ) \cdot \phi(\T{u}) \cdot x \ \subseteq \ \phi(15 \ell q \cdot A) \cdot \phi(\T{u}) \cdot x.$$
Our assumptions then imply that $\phi(\T{u}) \cdot x \in X^\cH \setminus \partial_j^\Phi(E, q \cdot A)$. We must make use of this fact carefully. A key observation is that $\T{r} + q \cdot A$ and $\T{r}' + q \cdot A$ are contained in $(1 - 3 \eta) \cdot C$ and $(1 - 3 \eta') \cdot C'$ respectively, and $\T{r}$ and $\T{r}'$ differ only in the $j^\text{th}$ coordinate and the difference is small compared to the scale of $F$-classes (the difference is at most $\frac{1}{2} a_j$ by an earlier computation, and the classes of $F$ are rough rectangles containing $A$). This implies that the direct path from $\T{r}$ to $\T{r}'$ is contained in $(1 + 2 \eta) \cdot C \cup (1 + 2 \eta') \cdot C'$. To see this in detail, let $\T{u}$ be any point on the direct path between $\T{r}$ and $\T{r}'$, let $U$ be the $F$-class containing $\phi(\T{u}) \cdot x$, and say $U$ is $(\Phi, 2 \theta)$-roughly $D$ at $x$, where $A \sqsubseteq D$ and $2 \theta \cdot D \sqsubseteq \epsilon \cdot A$. Then $\T{u} \in (1 + 2 \theta) \cdot D$, and (again by Lemma \ref{LEM BASIC2}(ii)) this implies that there is $\T{s}'' \in q \cdot A$ with $\T{u} + \T{s}'' \in (1 - 2 \theta) \cdot D$. Then we have
\begin{align*}
\T{r} + \T{s}'' & \in (1 - 2 \eta) \cdot C; \\
\T{u} + \T{s}'' & \in (1 - 2 \theta) \cdot D; \\
\T{r}' + \T{s}'' & \in (1 - 2 \eta') \cdot C'.
\end{align*}
As $\T{r}$ and $\T{r}'$ are close to one another and $\T{u}$ is in between them, we must have that $(1 - 2 \theta) \cdot D$ meets either $(1 - 2 \eta) \cdot C$ or $(1 - 2 \eta') \cdot C'$. So $U$ is equal to either $W$ or $W'$, and hence $\T{u}$ lies in $(1 + 2 \eta) \cdot C$ or $(1 + 2 \eta') \cdot C'$. So the direct path from $\T{r}$ to $\T{r}'$ is contained in $(1 + 2 \eta) \cdot C \cup (1 + 2 \eta') \cdot C$. Now if the $F$-classes $W$ and $W'$ are equal then we are done, and if they are unequal then $(1 - 2 \eta) \cdot C$ and $(1 - 2 \eta') \cdot C'$ are disjoint. In this case let $\T{u}$ be on the direct path between $\T{r}$ and $\T{r}'$ and lie in both $(1 + 3 \eta) \cdot C$ and $(1 + 3 \eta') \cdot C'$. Then of the two vectors $\T{u} \pm \lfloor q a_j \rfloor \T{e}_j$, one is contained in $(1 - 3 \eta) \cdot C$ and the other is contained in $(1 - 3 \eta') \cdot C'$ (this follows from clause (vi) of Lemma \ref{LEM BASIC} since translates of $12 \eta \cdot C$ and $12 \eta' \cdot C'$ are contained in $12 \epsilon \cdot A \subseteq q \cdot A$). The $q \cdot A$ neighborhoods of $\T{r}$ and $\T{r}'$ are contained in $(1 - 3 \eta) \cdot C$ and $(1 - 3 \eta') \cdot C'$, so it follows that of the two sets $\T{u} + \lfloor q a_j \rfloor \T{e}_j + q \cdot A^j$ and $\T{u} - \lfloor q a_j \rfloor \T{e}_j + q \cdot A^j$, one is contained in $(1 - 3 \eta) \cdot C$ and the other is contained in $(1 - 3 \eta') \cdot C'$. Now $\phi(\T{u}) \cdot x \not\in \partial_j^\Phi(E, q \cdot A)$ so there are $\T{p}, \T{p}' \in q \cdot A^j$ with
$$\phi(-\lfloor q a_j \rfloor \T{e}_j + \T{p}) \cdot \phi(\T{u}) \cdot x \ E \ \phi(\lfloor q a_j \rfloor \T{e}_j + \T{p}') \cdot \phi(\T{u}) \cdot x.$$
However
$$\phi(-\lfloor q a_j \rfloor \T{e}_j + \T{p}) \cdot \phi(\T{u}) \cdot x \ \in \ \phi(\T{u} - \lfloor q a_j \rfloor \T{e}_j + q \cdot A^j + \rZ) \cdot x,$$
$$\phi(\lfloor q a_j \rfloor \T{e}_j + \T{p}') \cdot \phi(\T{u}) \cdot x \ \in \ \phi(\T{u} + \lfloor q a_j \rfloor \T{e}_j + q \cdot A^j + \rZ) \cdot x,$$
and of the two sets $\T{u} - \lfloor q a_j \rfloor \T{e}_j + q \cdot A^j + \rZ$ and $\T{u} + \lfloor q a_j \rfloor \T{e}_j + q \cdot A^j + \rZ$, one is contained in $(1 - 2 \eta) \cdot C$ and the other is contained in $(1 - 2 \eta') \cdot C'$. Since $W$ and $W'$ are $(\Phi, 2 \eta)$-roughly $C$ and $(\Phi, 2 \eta')$-roughly $C'$ at $x$, respectively, it follows that there is a point in $W$ which is $E$-equivalent to a point in $W'$. Thus $W$ and $W'$ are contained in the same $E$-class and $\phi(\T{r}) \cdot x \ E \ \phi(\T{r}') \cdot x$.
\end{proof}

To conclude this section we present a technical counting lemma that will play a critical role in the next section. We remark that polynomial growth has been an important property in all previous results on the hyperfiniteness of orbit equivalence relations. This continues to be the case in the present paper, as can be seen in the lemma below where the bound on $t$ is independent of $A$. For the proof of this lemma it will be helpful to note that for a rectangle $A \subseteq \Z^\ell \times \Gamma$,
$$|A| \ = \ |\Gamma| \cdot \prod_{i = 1}^\ell \Big(2 \bdim_i(A) + 1 \Big).$$

\begin{lem} \label{LEM COUNT}
Let $G$ act on the set $X$, let $\Phi = (\ell, \phi, \rZ, \Gamma, \cH)$ be a chart for $G$, let $0 < \epsilon < 1/16$, let $q$ satisfy $6 \epsilon < q < 1/2$, and let $A \subseteq \Z^\ell\times\Gamma$ be a centered rectangle. Suppose $E$ is a $(\Phi, A, \epsilon)$-sub-rectangular equivalence relation on $X$. Let $x \in X^\cH$ and let $1 \leq i \leq \ell$. If $y_1, y_2, \ldots, y_t \in \partial_i(E, q \cdot A) \cap \phi(2^{17 \ell} \cdot A) \cdot x$ satisfy
$$\phi \Big( 2^{19 \ell} \cdot A^i + 5 q \cdot A \Big) \cdot y_r \ \cap \ \phi \Big( 2^{19 \ell} \cdot A^i + 5 q \cdot A \Big) \cdot y_s \ = \ \varnothing$$
for all $r \neq s$, then $t \leq 2^{22 \ell^2}$.
\end{lem}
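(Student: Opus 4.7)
The plan is to exploit the sub-rectangular structure to pass to a rectangular sub-equivalence relation $F \subseteq E$, to show each class of $F$ supports at most two of the $y_r$'s, and finally to bound the number of $F$-classes involved by a volume-packing argument. Fix a $(\Phi, A, \epsilon)$-rectangular $F \subseteq E$ and write $y_r = \phi(\T{t}_r) \cdot x$ with $\T{t}_r \in 2^{17\ell} \cdot A$; let $V_r$ denote the $F$-class of $y_r$. Since $F \subseteq E$ we have $\partial_i^\Phi(E, q \cdot A) \subseteq \partial_i^\Phi(F, q \cdot A)$, so each $y_r$ is a facial-boundary point of $F$. Apply Lemma \ref{LEM BASE} with base $x$ and $M = 2^{17\ell}$ to represent $V_r$ as $(\Phi, 2\delta_r)$-roughly $B_r + \T{v}_r$ at $x$, then Lemma \ref{LEM FACES} to localize $\T{t}_r$ in one of the two face-neighborhoods of $V_r$ perpendicular to $\T{e}_i$.

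The central claim is that each $F$-class contains at most two of the $y_r$'s, at most one per face. Suppose $y_r$ and $y_s$ lie on the same face of a single $F$-class. The face localization forces $|(\T{t}_r - \T{t}_s)_i| \leq 4q a_i + 2\epsilon a_i$, while the region inclusion forces $|(\T{t}_r - \T{t}_s)_j| \leq 2^{17\ell+1} a_j$ for $j \neq i$. By the chart axiom applied to $\phi(\T{t}_s)\phi(\T{t}_r)^{-1}$, write $y_s = \phi(\T{u}) \cdot y_r$ with $\T{u} \in (\T{t}_s - \T{t}_r) + \rZ$. The hypotheses $q > 6\epsilon$ and $\bdim_j(\rZ) \geq 1$ (which forces $\epsilon a_i \geq 2$ and hence $q a_i \geq 12$) give $|u_i| < 5 q a_i$, so $|u_i| \leq \lfloor 5 q a_i \rfloor$ as $u_i$ is integer; for $j \neq i$ we get $|u_j| < 2^{17\ell + 2} a_j \leq 2^{19\ell} a_j$ using $\ell \geq 1$. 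Hence $\T{u}$ lies in $W := 2^{19\ell} \cdot A^i + 5q \cdot A$, so $y_s \in \phi(W) \cdot y_r$; since trivially $y_s \in \phi(W) \cdot y_s$, this contradicts the disjointness hypothesis.

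Let $N$ be the number of distinct $F$-classes containing some $y_r$, so that $t \leq 2N$. It remains to show $N \leq 2^{22\ell^2 - 1}$. Each $V_r$ contains a ``core'' $\phi(\T{v}_r + (1/2) \cdot B_r) \cdot x$ of volume at least $2^{-\ell} |A|$; these cores are pairwise disjoint since the $F$-classes are, and each $V_r$ meets $R := \phi(2^{17\ell} \cdot A) \cdot x$. A volume-packing argument in $\Z^\ell \times \Gamma$ (leveraging the polynomial growth carried by the chart) gives the claimed bound: dyadically classify $F$-classes according to the dimensions $b_{V_r,j}$, noting that ``small'' $F$-classes with all $b_{V_r,j} \leq 2^{17\ell} a_j$ have cores inside a bounded expansion of $R$ and hence number at most $2^{\ell} \cdot |R| / |A| \lesssim 2^{17\ell^2 + O(\ell)}$, while ``large'' $F$-classes contribute few additional counts because each occupies volume exceeding $2^{16\ell} |A|$ and they must be pairwise disjoint. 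Combining the two steps yields $t \leq 2N \leq 2^{22\ell^2}$.

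The main obstacle is the final volume-packing step: because the $F$-classes may have widely varying dimensions and need not fit inside $R$, one must carefully bound the ``effective'' region they inhabit. This is where the polynomial growth of $\Z^\ell \times \Gamma$ through the chart is genuinely used, and it is also what yields a bound on $t$ that depends only on $\ell$ and is independent of $A$, $q$, and $\epsilon$. The preceding facial-localization steps are comparatively routine consequences of Lemmas \ref{LEM BASE} and \ref{LEM FACES} combined with the chart axiom.
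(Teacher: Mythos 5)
The overall structure you chose — pass to a rectangular sub-equivalence relation $F\subseteq E$, show each $F$-class carries at most two of the $y_r$'s (one per face), and then count $F$-classes by volume — is exactly the structure of the paper's proof, and your ``at most two per class'' step is essentially correct modulo some harmless slack in constants. The genuine gap is in the final volume-packing step, and your proposed dichotomy into ``small'' and ``large'' $F$-classes does not close it. The issue with ``large'' classes is concrete: the fact that a large $F$-class has huge total volume and the $F$-classes are pairwise disjoint does \emph{not} bound how many large classes can meet a fixed bounded region $R$, because almost all of each such class can lie outside $R$ and contribute only a handful of points to $R$. So ``pairwise disjoint plus large volume'' gives no cap on their number near $x$.

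The paper sidesteps this by proving a single uniform statement with no small/large dichotomy: every $F$-class $U$ that meets $\{y_1,\dots,y_t\}$ intersects the slightly expanded region $\phi\big((2^{17\ell}+2)\cdot A\big)\cdot x$ in at least $|(1/4)\cdot A|$ points. The reason is geometric and uses Lemma \ref{LEM BASE} and Lemma \ref{LEM BASIC2}(i): writing $U$ as $(\Phi,2\delta)$-roughly $B+\T{u}$ at $x$, one has that $(1+2\delta)\cdot B+\T{u}$ meets $2^{17\ell}\cdot A$, hence $(1-2\delta)\cdot B+\T{u}$ meets $(2^{17\ell}+\tfrac12)\cdot A$; and since $(1/4)\cdot A\sqsubseteq(1-2\delta)\cdot B$, a translate $\T{w}+(1/4)\cdot A$ fits inside $\big((1-2\delta)\cdot B+\T{u}\big)\cap(2^{17\ell}+2)\cdot A$. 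In other words, the relevant quantity is not the total volume of the class but its \emph{clipped} volume inside a fixed bounded window, and that clipped volume is uniformly $\geq |(1/4)\cdot A|$ regardless of how elongated $B$ is. Once that is in hand, disjointness of the $F$-classes and the fact that they all overlap the single finite set $\phi((2^{17\ell}+2)\cdot A)\cdot x$ give the bound $\big|(2^{17\ell}+2)\cdot A\big|/\big|(1/4)\cdot A\big|\leq 2^{21\ell^2}$ on the number of relevant classes, hence $t\leq 2\cdot 2^{21\ell^2}\leq 2^{22\ell^2}$. Your dyadic classification is therefore unnecessary and, as written, does not work; the lemma you should reach for to repair the ``large'' case is precisely the observation about clipped volume, which applies uniformly and renders the case split moot.

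One smaller remark: your ``core'' $\phi(\T{v}_r+(1/2)\cdot B_r)\cdot x$ need not lie inside any fixed expansion of $R$ when $B_r$ is large, which is a symptom of the same problem. The paper deliberately does not use the centered core; it uses a carefully chosen translate of $(1/4)\cdot A$ that is slid into the intersection with the bounded window, which is what makes the argument uniform.
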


\begin{proof}
We remark that $2^{22 \ell} \cdot A \subseteq \dom(\phi)$ by Definition \ref{DEFN RECTEQ}. Let $y_1, y_2, \ldots, y_t \in \partial_i(E, q \cdot A) \cap \phi(2^{17 \ell} \cdot A) \cdot x$ be as in the statement of the lemma, and let $F$ be a $(\Phi, A, \epsilon)$-rectangular sub-equivalence relation of $E$. We claim that each class of $F$ can contain at most two members of $y_1, y_2, \ldots, y_t$ and that furthermore any class of $F$ meeting $\{y_1, \ldots, y_t\}$ must meet $\phi((2^{17 \ell} + 2) \cdot A) \cdot x$ with cardinality at least $|(1/4) \cdot A|$. Assuming this claim, we have that $t$ is at most twice the number of $F$-classes meeting $\{y_s \: 1 \leq s \leq t\}$, and the number of such $F$ classes is at most
$$\begin{array}{lllll}
\displaystyle{\frac{\left|(2^{17\ell}+2)\cdot A\right|}{\left|\frac{1}{4}\cdot A\right|}} & = & \displaystyle{\prod_{j=1}^\ell\frac{2(2^{17\ell}+2)\bdim_j(A)+1}{2 \lfloor\frac{1}{4}\bdim_j(A)\rfloor+1}} & \leq & \displaystyle{\prod_{j=1}^\ell\frac{2(2^{17\ell}+3)\bdim_j(A)}{\frac{1}{4}\bdim_j(A)}} \vspace{0.05in} \\
& = & 8^\ell(2^{17\ell}+3)^\ell & \leq & 8^\ell(2\cdot 2^{17\ell})^\ell \vspace{0.05in} \\
& = & 16^\ell\cdot 2^{17\ell^2} & \leq & 2^{21\ell^2}.
\end{array}$$
Thus assuming the claim we have $t \leq 2 \cdot 2^{21\ell^2} \leq 2^{22\ell^2}$, so it suffices to prove the claim.

Let $U$ be an $F$-class meeting $\{y_s \: 1 \leq s \leq t\}$. Then $U$ must also meet both $X^\cH$ and $\phi(2^{17 \ell} \cdot A) \cdot x$. So $U$ must be $(\Phi, \delta)$-roughly $B$ for some rectangle $B$ and $\delta>0$, which we fix, satisfying $A \sqsubseteq B$, $2 \delta \cdot B \sqsubseteq \epsilon \cdot A$, and $2^{22\ell}\cdot B\subseteq\dom(\phi)$. Using Lemma \ref{LEM BASE}, fix $\T{u}\in \dom(\phi)$ such that $U$ is $(\Phi,2\delta)$-roughly $B+\T{u}$ at $x$. Then $(1 + 2 \delta) \cdot B + \T{u}$ meets $2^{17 \ell} \cdot A$, so clause (i) of Lemma \ref{LEM BASIC2} implies that $(1 - 2 \delta) \cdot B + \T{u}$ meets $(2^{17 \ell} + 8 \epsilon) \cdot A \subseteq (2^{17 \ell} + 1/2) \cdot A$. Since $(1/4) \cdot A \sqsubseteq (1 - 2 \delta) \cdot A \sqsubseteq (1 - 2 \delta) \cdot B$, we can find $\T{w}$ with
$$\T{w} + (1/4) \cdot A \ \subseteq \ \Big( (1 - 2 \delta) \cdot B + \T{u} \Big) \cap \Big( (2^{17 \ell} + 2) \cdot A \Big).$$
Then $\phi(\T{w} + (1/4) \cdot A) \cdot x$ is contained in both $U$ and $\phi((2^{17 \ell} + 2) \cdot A) \cdot x$ and therefore $|U \cap \phi((2^{17 \ell} + 2) \cdot A)| \geq |(1/4) \cdot A|$.

Now towards a contradiction suppose $U$ contains three elements of $\{y_s \: 1 \leq s \leq t\}$. With $\T{u}$ as above, set $z = \phi(\T{u}) \cdot x$ and let $\T{b} = \bdim(B)$. Note that $U$ is $(\Phi, \delta)$-roughly $B$ at $z \in X^\cH$ by Lemma \ref{LEM BASE}. Since $\partial_i^\Phi(E, q \cdot A) \subseteq \partial_i^\Phi(F, q \cdot A)$, by Lemma \ref{LEM FACES} there must be $j = \pm 1$ and $r \neq s$ with
$$y_r, y_s \ \in \ \phi \Big( j \cdot b_i \cdot \T{e}_i + B^i + 2 q \cdot A \Big) \cdot z.$$
Therefore
$$y_r \ \in \ \phi(B^i - B^i + 4 q \cdot A + \rZ) \cdot y_s \ \subseteq \ \phi(B^i - B^i + 5 q \cdot A) \cdot y_s$$
(we do not write $2 \cdot B^i$ because $B$ may not be centered; however, note that $B^i - B^i$ is centered). On the other hand, $y_r, y_s \in \phi(2^{17 \ell} \cdot A) \cdot x$ and thus $y_r \in \phi((2 \cdot 2^{17 \ell} + \epsilon) \cdot A) \cdot y_s$. Since $y_s \in \partial_i^\Phi (E, q \cdot A) \subseteq X^\cH$, the map $\T{v} \mapsto \phi(\T{v}) \cdot y_s$ is injective, so we obtain
$$\phi(\T{0}) \cdot y_r = y_r \in \phi \Big( (B^i - B^i + 5 q \cdot A) \cap (2 \cdot 2^{17 \ell} + \epsilon) \cdot A \Big) \cdot y_s \subseteq \phi \Big( 2^{19 \ell} \cdot A^i + 5 q \cdot A \Big) \cdot y_s.$$
This is a contradiction since $A$ is centered and hence $\T{0} \in 2^{19 \ell} \cdot A^i + 5 q \cdot A$.
\end{proof}

\section{Orthogonal equivalence relations} \label{SEC ORTHO}

Pairs of orthogonal equivalence relations were first introduced by Gao and Jackson in \cite{GJ}. This notion is of great importance in the study of hyperfiniteness of orbit equivalence relations and is the key ingredient in both the present paper and \cite{GJ}. The precise definition of orthogonality will necessarily vary from application to application (in particular our definition is not identical to that in \cite{GJ}) but the fundamental concept remains the same: under a suitable notion of boundary types, we require boundaries of the same type occurring in two equivalence relations to be separated by a sufficient distance. In our setting this concept takes the following precise form.

\begin{defn} \label{DEFN ORTHO}
Let $G$ act on the set $X$, let $\Phi = (\ell, \phi, \rZ, \Gamma, \cH)$ be a chart for $G$, and let $A \subseteq \Z^\ell \times \Gamma$ be a centered rectangle with $30 \ell \cdot A \subseteq \dom(\phi)$. Two equivalence relations $E$ and $F$ on $X$ are \emph{$(\Phi, A)$-orthogonal} if
$$\phi(30 \ell \cdot A) \cdot \partial_i(E, A) \ \cap \ \phi(30 \ell \cdot A) \cdot \partial_i(F, A) \ = \ \varnothing$$
for every $1 \leq i \leq \ell$.
\end{defn}

Note that if $E$ and $F$ are rectangular equivalence relations, then in general their boundaries must intersect; however, if additionally $E$ and $F$ are orthogonal, then only boundaries of distinct types can meet each other, in which case these boundaries will be perpendicular or orthogonal to each other.  It is for this reason that Gao and Jackson call such pairs of equivalence relations orthogonal.

Our next lemma illustrates how orthogonality can be used to obtain hyperfiniteness results, thus demonstrating one of the key ideas behind the proof of the main theorem.

\begin{lem} \label{LEM ORTHOSEQ}
Let $G$ act freely on the set $X$, let $\Phi = (\ell, \phi, \rZ, \Gamma, \bOne)$ be a chart for $G$, let $0 < \epsilon < 1/4$, let $q$ satisfy $12 \epsilon < q < 1 / (24 \ell)$, and let $A \subseteq \Z^\ell \times \Gamma$ be centered rectangle. Assume that $\phi(\rZ)$ generates $G$. If $E_1, E_2, \ldots$ is an infinite sequence of $(\Phi, A, \epsilon)$-sub-rectangular equivalence relations on $X$ which are pairwise $(\Phi, q \cdot A)$-orthogonal, then for every $x, y \in X$ we have
$$G \cdot x = G \cdot y \quad \Longrightarrow \quad (\exists m \in \N) \ (\forall n \geq m) \quad x \ E_n \ y.$$
\end{lem}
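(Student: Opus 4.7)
The plan is to use the contrapositive of Lemma \ref{LEM STRONG BND} together with the $(\Phi, q \cdot A)$-orthogonality hypothesis to show that for each fixed $x \in X$, all but finitely many $n$ satisfy $\phi(3 \cdot \rZ) \cdot x \subseteq [x]_{E_n}$, and then bootstrap from this ``one generator step'' statement to arbitrary orbit-equivalent pairs using the fact that $\phi(\rZ)$ generates $G$.

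First I would fix $x \in X$ and argue the following claim: there is a cofinite set $N(x) \subseteq \N$ such that $\phi(3 \cdot \rZ) \cdot x \subseteq [x]_{E_n}$ for every $n \in N(x)$. Since the action is free and $\cH = \bOne$, we have $X^\cH = X$, so the hypothesis ``$\phi(15 \ell q \cdot A) \cdot x \subseteq X^\cH$'' of Lemma \ref{LEM STRONG BND} is automatic. By the contrapositive of that lemma, if $x \notin \phi(30 \ell \cdot (q \cdot A)) \cdot \partial_i^\Phi(E_n, q \cdot A)$ for every $1 \leq i \leq \ell$, then $\phi(3 \cdot \rZ) \cdot x \subseteq [x]_{E_n}$. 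The $(\Phi, q \cdot A)$-orthogonality of distinct $E_m, E_n$ says precisely that for each fixed $i$, the sets $\phi(30 \ell \cdot (q \cdot A)) \cdot \partial_i^\Phi(E_n, q \cdot A)$ are pairwise disjoint in $n$. Hence for each $i$ there is at most one $n$ for which $x$ lies in this set, so the union over $i$ is bad for at most $\ell$ values of $n$. Taking $N(x)$ to be the complement of this finite set of bad $n$'s gives the claim.

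Next I would reduce the statement of the lemma to the single generator step. Given $G \cdot x = G \cdot y$, write $y = g_k \cdot g_{k-1} \cdots g_1 \cdot x$ where each $g_j \in \phi(\rZ) \cup \phi(\rZ)^{-1}$; this is possible because $\phi(\rZ)$ generates $G$. Set $x_0 = x$ and $x_j = g_j \cdot x_{j-1}$, so $x_k = y$. For $g_j = \phi(\T{z})$ with $\T{z} \in \rZ$, we obviously have $x_j \in \phi(\rZ) \cdot x_{j-1} \subseteq \phi(3 \cdot \rZ) \cdot x_{j-1}$. For $g_j = \phi(\T{z})^{-1}$, apply the fifth implication in Definition \ref{DEFN CHART} (with $H = \{1_G\}$): since $-\T{z} + \rZ \subseteq 2 \cdot \rZ \subseteq 3 \cdot \rZ \subseteq \dom(\phi)$, there exists $\T{w} \in \rZ$ with $\phi(\T{z})^{-1} = \phi(-\T{z} + \T{w})$, and $-\T{z} + \T{w} \in 2 \cdot \rZ \subseteq 3 \cdot \rZ$. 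In either case, $x_j \in \phi(3 \cdot \rZ) \cdot x_{j-1}$.

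Finally, applying the first paragraph's claim at each of the finitely many points $x_0, x_1, \ldots, x_{k-1}$ produces cofinite sets $N(x_0), \ldots, N(x_{k-1}) \subseteq \N$ such that for every $n$ in $N(x_{j-1})$ we have $x_j \in \phi(3 \cdot \rZ) \cdot x_{j-1} \subseteq [x_{j-1}]_{E_n}$. For $n$ in the (still cofinite) intersection $N(x_0) \cap \cdots \cap N(x_{k-1})$, transitivity of $E_n$ chains these together to yield $x \, E_n \, y$, which is exactly the conclusion. The only even mildly delicate step is verifying $\phi(\T{z})^{-1} \in \phi(3 \cdot \rZ)$ in the reduction; everything else is a clean combination of Lemma \ref{LEM STRONG BND} with the pigeonhole afforded by orthogonality.
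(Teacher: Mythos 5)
Your proposal is correct and takes essentially the same approach as the paper: both arguments reduce to single-generator steps using the fact that $\phi(\rZ)$ generates $G$, then apply Lemma \ref{LEM STRONG BND} (in contrapositive form) together with the orthogonality hypothesis to bound the number of bad indices $n$ by $\ell$. The paper's proof states the reduction ``it suffices to prove the claim when $y \in \phi(\rZ) \cdot x$'' in one sentence, whereas you spell out the chain $x_0, \ldots, x_k$ and verify the inverse case $g_j = \phi(\T{z})^{-1}$ explicitly via the fifth implication of Definition \ref{DEFN CHART}; this is a harmless difference in level of detail, not a different argument.
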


Notice that if the $E_n$'s are finite and Borel, then the conclusion of the Lemma implies that the $G$-orbit equivalence relation is hyperfinite.

\begin{proof}
Note that $X^\bOne = X$ since $G$ acts freely. Since $\phi(\rZ)$ generates $G$, it suffices to prove the claim when $y \in \phi(\rZ) \cdot x$. If $n$ satisfies $\neg\, x \ E_n \ y$ then by Lemma \ref{LEM STRONG BND} we have
$$x \ \in \ \phi \big( 30 \ell \cdot (q \cdot A) \big) \cdot \partial_i^\Phi(E_n, q \cdot A)$$
for some $1 \leq i \leq \ell$. Thus our orthogonality assumption implies that this can occur for at most $\ell$ positive integers $n$. Therefore $x \ E_n \ y$ for all but finitely many $n \in \N$.
\end{proof}

The previous lemma motivates the construction of a countably infinite family of pairwise orthogonal sub-rectangular equivalence relations. This is one of the most important constructions of the paper and is contained in the lemma below.

\begin{lem} \label{LEM MAIN}
Let $G \acts X$ be a Borel action of $G$ on the Polish space $X$. Let $\Phi = (\ell, \phi, \rZ, \Gamma, \cH)$ be a chart for $G$, let $A \subseteq \Z^\ell \times \Gamma$ be a centered rectangle, and let $\epsilon,q\in\R$ and $b\in\N$ be positive numbers satisfying:
$$2^{40 \ell} \cdot A \ \subseteq \ \dom(\phi);$$
$$2 \cdot 36^2 \cdot 2^{14 \ell} \cdot \rZ \ \sqsubseteq \ \epsilon \cdot A;$$
$$8 \epsilon \ < \ q \ < \ \frac{1}{4} \cdot 306^{-1} \ell^{-1} b^{-1} 2^{-22 \ell^2}.$$
Let $E_1, E_2, \ldots, E_s$ be Borel equivalence relations on $X$. Suppose that each $E_k$ is $(\Phi,A, \epsilon)$-sub-rectangular, and that for every $x \in X^\cH$ there are at most $b$-many integers $t \in \{1, 2, \ldots, s\}$ with $\phi(8 \cdot 2^{14 \ell} \cdot A) \cdot x \not\subseteq [x]_{E_t}$. Then there is a finite Borel $(\Phi, A, \epsilon)$-rectangular equivalence relation $F$ which is $(\Phi, q \cdot A)$-orthogonal to each $E_t$. Furthermore, if $X$ is zero-dimensional, $G$ acts continuously and freely, $\cH = \bOne$, and each $E_t$ is $G$-clopen, then $F$ can be chosen to be $G$-clopen as well.
\end{lem}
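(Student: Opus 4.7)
The plan is to construct $F$ by selecting a set $Y \subseteq X^{\cH}$ of well-separated \emph{base points} via the marker machinery, attaching to each $y \in Y$ a rough rectangular class $C_y = \phi(B_y + \T{v}_y) \cdot y$ whose $2\ell$ faces miss every $\phi(30\ell\cdot(q\cdot A))$-neighborhood of $\partial_i^{\Phi}(E_t, q\cdot A)$, and making singletons of all points not absorbed into any $C_y$. Setting $M = 2^{14\ell}$, we will arrange $A \sqsubseteq B_y \sqsubseteq 4M \cdot A$; this combined with the hypotheses $2^{40\ell}\cdot A \subseteq \dom(\phi)$ and $2\cdot 36^2 \cdot 2^{14\ell}\cdot \rZ \sqsubseteq \epsilon \cdot A$ is enough to verify the conditions of Definition~\ref{DEFN RECTEQ}.

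For the base points, apply Corollary~\ref{COR PART} to $X^{\cH}$ with the finite symmetric set $F_0 = \phi(8M\cdot A) \cup \phi(8M\cdot A)^{-1}$. The chart injectivity clause gives $F_0 \cap \Stab(x) = \{1_G\}$ on $X^{\cH}$, so the hypothesis of Corollary~\ref{COR PART} is met; we obtain a Borel (respectively clopen) partition $X^{\cH} = Y_1 \sqcup \cdots \sqcup Y_k$. Within each $Y_j$ distinct bases are $F_0$-inequivalent, which forces any two classes $C_y, C_{y'}$ with $y, y' \in Y_j$ built inside $\phi(4M\cdot A)\cdot y$ and $\phi(4M\cdot A)\cdot y'$ respectively to be disjoint. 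Processing the $Y_j$ in increasing order of $j$, and skipping any base point already absorbed by an earlier class, then handles disjointness between stages as well.

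The crux is choosing $B_y$ and $\T{v}_y$ so that in every direction $i \in \{1,\dots,\ell\}$, both faces of $B_y + \T{v}_y$ perpendicular to $\T{e}_i$ lie outside $\phi(30\ell\cdot(q\cdot A))\cdot\partial_i^{\Phi}(E_t, q\cdot A)$ for every $t$. By hypothesis at most $b$ of the $E_t$ admit any boundary meeting $\phi(8M\cdot A)\cdot y$; for each such $E_t$, Lemma~\ref{LEM COUNT} bounds the number of essentially distinct $\partial_i^{\Phi}(E_t, q\cdot A)$-clusters in the relevant region by $2^{22\ell^2}$; and each cluster forbids an interval of face-positions in direction $i$ of length at most on the order of $60\ell q\cdot\bdim_i(A)$ (the $30\ell\cdot(q\cdot A)$ safety margin on each side plus the $i$-diameter of the cluster, which is itself on the order of $q\cdot\bdim_i(A)$). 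Summing yields a total forbidden length per direction of order $\ell\cdot b\cdot 2^{22\ell^2}\cdot q\cdot\bdim_i(A)$, and the hypothesis $q < \tfrac{1}{4 \cdot 306 \, \ell \, b \, 2^{22\ell^2}}$ leaves a strictly positive window of allowable face-positions in every coordinate. A canonical (e.g.\ lexicographically least) choice then produces $B_y$ and $\T{v}_y$ in a Borel (clopen) manner.

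The main obstacle is this face-selection step: it simultaneously requires $2\ell \cdot s$ exclusion conditions per base, consistency with the uniform rectangularity bounds of Definition~\ref{DEFN RECTEQ}, and disjointness from previously built classes; the direction-by-direction perturbations must be absorbable into the rough error $\delta$, which is where the hypothesis $2\cdot 36^2\cdot 2^{14\ell}\cdot\rZ\sqsubseteq\epsilon\cdot A$ enters (converting chart fuzziness of size $\rZ$ across a rectangle of size roughly $M\cdot A$ into an $\epsilon$-sized error relative to $A$). Finally, the $G$-clopen conclusion in the topological case ($\cH=\bOne$, free continuous action, zero-dimensional $X$, and $G$-clopen $E_t$) is obtained because $\partial_i^{\Phi}(E_t, q\cdot A)$ is clopen by Lemma~\ref{LEM BNDCLOPEN}, so at every stage the face-selection and translation reduce to clopen choices from finite menus; the resulting finite equivalence relation $F$ is then $G$-clopen.
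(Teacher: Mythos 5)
Your construction aims to produce a family of \emph{pairwise disjoint} rough rectangles $C_y$ around base points, making singletons of all points not absorbed into some $C_y$. This creates an unresolved gap. Definition~\ref{DEFN RECTEQ} requires that \emph{every} $F$-class meeting $X^{\cH}$ be $(\Phi,\delta)$-roughly $B$ for some $B\sqsupseteq A$; since $A$ is non-degenerate, a singleton class cannot satisfy this. So the $C_y$'s must \emph{exactly} partition $X^{\cH}$, and your proposal does not achieve this. Your claim that ``skipping any base point already absorbed by an earlier class then handles disjointness between stages'' is false: if $y'\in Y_{j+1}$ lies just outside some $C_y$ with $y\in Y_j$, then $y'$ is not absorbed, so $C_{y'}$ is built; but $C_{y'}$ must contain a neighborhood of $y'$ of radius at least $A$ and will therefore overlap $C_y$. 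Conversely, if you try to shrink or translate $C_{y'}$ to avoid $C_y$, you have to do this simultaneously against all previously built $C$'s and all $2\ell s$ boundary-avoidance constraints, and there is no argument that a rectangle $B_{y'}\sqsupseteq A$ containing $y'$ survives all of these while keeping the cover exact. In short, you have reduced the problem to tiling $X^{\cH}$ by rough rectangles, but you have not supplied the mechanism that produces the tiling.

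The paper's proof circumvents exactly this obstacle by \emph{not} trying to build disjoint rectangles directly. It first builds an \emph{overlapping} cover $\{R_y\}_{y\in Y}$ of $X^{\cH}$ by rough rectangles (with $Y$ a marker set, not all of $X^{\cH}$), then lets $E$ be the equivalence ``same membership pattern in the $R_y$'s,'' and finally refines $E$ to $F$ by further intersecting with the annular pieces $R_y^\alpha$, $\alpha\in\{-1,0,1\}^\ell$. Every resulting $F$-class meeting $X^{\cH}$ is automatically a finite intersection of rectangular regions, hence itself a rectangle; there is no coverage problem because the original $R_y$'s already cover $X^{\cH}$. The analogue of your face-selection step is the choice of the radius function $\T{d}(y)$: clause~(ii) of the paper's requirements on $\T{d}$ guarantees that parallel faces of nearby $R_y$'s are at least $3a_i$ apart (so the intersections that arise have each side $\geq a_i$, yielding $A\sqsubseteq B$), and clause~(iii) supplies the boundary avoidance via Lemma~\ref{LEM COUNT} in the way you describe. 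Your counting heuristic and use of the numerical hypotheses are essentially correct and match the paper's; the missing idea is the cover-and-refine device that converts those choices into an exact, Borel, rectangular partition without ever solving a disjoint packing problem.
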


The proof of this lemma is long and technical, but we include plenty of discussion to aid the reader through it.

\begin{proof}
We assume that $X$ is zero-dimensional, $G$ acts continuously (but not necessarily freely), $X^\cH$ is clopen, and each $E_k$ is $G$-clopen. Our arguments easily transfer to the Borel setting by ignoring these topological considerations. Note that if $G$ acts freely and $\cH = \bOne$ then $X^\cH = X$ is indeed clopen. At one clearly identified paragraph in the proof we will temporarily assume that $G$ acts freely and $\cH = \bOne$ and show that $F$ is $G$-clopen.

Our goal is to build a finite Borel equivalence relation $F$ which is both $(\Phi, A, \epsilon)$-rectangular and $(\Phi, q\cdot A)$-orthogonal to $E_1, E_2, \ldots, E_s$. In order to build $F$, we will start with a collection of rough rectangles $\{R_y\}$ which cover $X^\cH$. This cover will then generate an equivalence relation $E$ via taking intersections of the $R_y$'s and their complements. The equivalence relation $E$ will not be rectangular as its classes will be rough polygons (with faces perpendicular to the coordinate axes $\T{e}_i$) instead of rough rectangles. However $E$ will in fact be a sub-rectangular equivalence relation and we will proceed to divide $E$-classes just as much as is necessary in order to obtain our rectangular sub-equivalence relation $F$.

The first step of the construction is to cover $X^\cH$ with a collection of rough rectangles. We want the equivalence relation $E$ generated from this cover to have classes that are of size $A$ or larger, when they meet $X^\cH$. We therefore want our collection of rough rectangles to cover $X^\cH$ but not be packed too densely or have parallel faces that are too close together. We achieve this by using a marker set. Set $p = 2^{14 \ell}$. Let $K$ be the symmetric closure of $\phi((3p / 4) \cdot A)$. Note that $K \cap \Stab(x) = \{1_G\}$ for all $x \in X^\cH$ and that $K \cdot x \subseteq \phi((3p / 4) \cdot A + \rZ) \cdot x \subseteq \phi(p \cdot A) \cdot x$ for $x \in X^\cH$. Therefore applying Lemma \ref{LEM MARKER} with $F = K$ gives a clopen marker set $Y \subseteq X^\cH$ satisfying:
$$(\forall x \in X^\cH) \ (\exists y \in Y) \ x \in \phi(p \cdot A) \cdot y;$$
$$(\forall y, y' \in Y) \ [\,y \in \phi ((3p/4) \cdot A) \cdot y' \ \Longrightarrow \ y = y'\,].$$
Our rough rectangles $R_y$ will be indexed by the marker points $y \in Y$. In fact, we will associate to each $y\in Y$ a centered rectangle $D_y\subseteq\Z^\ell\times\Gamma$ and then set $R_y=\phi(D_y)\cdot y$. Thus in order for the $R_y$'s to cover $X^\cH$ we will require $p \cdot A \subseteq D_y$, and in order for the $R_y$'s not to be packed too densely we will require $D_y \subseteq 2 p \cdot A$ (clause (i) below). In the end each $E$-class meeting $X^\cH$ must roughly contain $A$ and therefore we will require that parallel faces of $R_y$ and $R_{y'}$ be sufficiently far apart for $y \neq y'$ (clause (ii) below). Finally, in order to achieve orthogonality with $E_1, E_2, \ldots ,E_s$ we will require the faces of the $R_y$'s be sufficiently far away from the corresponding $q \cdot A$-boundaries of $E_1, E_2, \ldots, E_s$ (clause (iii) below). We set $\T{a} = \bdim(A)$. The specific conditions are as follows. To each $y \in Y$ we wish to associate, in a continuous fashion, a centered rectangle $D_y$ with radius vector $\T{d}(y) \in \Z^\ell \times \{1_\Gamma\}$ such that for every $y \in Y$ and $1 \leq i \leq \ell$,
\begin{enumerate}
\item[\rm (i)] $p \cdot A \subseteq D_y \subseteq 2 p \cdot A$;
\item[\rm (ii)] if $y' \in Y$ and $y' = \phi(\T{u}) \cdot y$ with $\T{0} \neq \T{u} \in 13 p \cdot A$, then the four numbers $\pm d_i(y), u_i\pm d_i(y') \in \Z$ are at least $3 a_i$ apart from one another;
\item[\rm (iii)] if $1 \leq k \leq s$, $z \in \partial_i^\Phi(E_k, q \cdot A)$, and $z = \phi(\T{u}) \cdot y$ with $\T{u} \in 7 p \cdot A$, then $u_i$ and $\pm d_i(y)$ are at least $64 \ell q \cdot a_i$ apart from one another.
\end{enumerate}
These conditions are in fact not very restrictive as the following claim points out.

\vspace{0.2in}
\noindent
\underline{Claim:} Suppose that $\T{d}$ is defined on a subset of $Y$ and satisfies clauses (i), (ii), and (iii). Then for any $y \in Y$ we can define $\T{d}(y)$ so that clauses (i), (ii), and (iii) continue to hold.

\vspace{0.1in}
\noindent
\textit{Proof of Claim:} Fix $y \in Y$ and fix $1 \leq i \leq \ell$. We will pick $d_i(y) \in [p a_i, 2 p a_i] \cap \Z$ to satisfy clauses (ii) and (iii). Notice that $\phi((p/4) \cdot A)^{-1} \phi((p/4) \cdot A) \subseteq \phi((3p/4) \cdot A)$ and therefore the sets $\phi((p/4) \cdot A) \cdot y'$, $y' \in Y$, are pairwise disjoint. Furthermore if $y' \in Y \cap \phi(13 p \cdot A) \cdot y$ then $\phi((p/4) \cdot A) \cdot y' \subseteq \phi(14 p \cdot A) \cdot y$. Therefore $\phi(13 p \cdot A) \cdot y$ can meet at most $N_1$ points of $Y$, where
$$\frac{\left|14p\cdot A\right|}{\left|\frac{1}{4}p\cdot A\right|} \ = \ \prod_{j=1}^\ell\frac{2(14\cdot 2^{14\ell}\cdot\bdim_i(A))+1}{2(2^{14\ell-2}\cdot\bdim_i(A))+1} \ \leq \ \prod_{j=1}^\ell\frac{3}{2}\left(\frac{14\cdot 2^{14\ell}}{2^{14\ell-2}}\right) \ = \ 84^\ell \ =: \ N_1.$$
Thus clause (ii) requires that $d_i(y)$ avoid at most $4 \cdot N_1$-many intervals of cardinality $6 a_i + 1 < 7 a_i$.

Now we consider clause (iii). Notice that if $\phi(7 p \cdot A) \cdot y \cap \partial_i^\Phi(E_r, q \cdot A) \neq \varnothing$ then $\phi(8 p \cdot A) \cdot y \not\subseteq [y]_{E_r}$. Let $J$ be the collection of integers $r \in \{1, 2, \ldots, s\}$ for which $\phi(8 p \cdot A) \cdot y \not\subseteq [y]_{E_r}$. By assumption we have $|J| \leq b$. Fix $r \in J$. Observe that if $z_1, z_2 \in \partial_i^\Phi(E_r, q \cdot A) \cap \phi(2^{17 \ell} \cdot A) \cdot y$ satisfy $z_1 = \phi(\T{u}) \cdot y$, $z_2 = \phi(\T{v}) \cdot y$, and
$$\phi(2^{19 \ell} \cdot A^i + 5 q \cdot A) \cdot z_1 \ \cap \ \phi(2^{19 \ell} \cdot A^i + 5 q \cdot A) \cdot z_2 \ \neq \ \varnothing,$$
then
$$\phi(2^{19 \ell} \cdot A^i + 6 q \cdot A + \T{u}) \cdot y \ \cap \ \phi(2^{19 \ell} \cdot A^i + 6 q \cdot A + \T{v}) \cdot y \ \neq \ \varnothing$$
and hence $|u_i - v_i| \leq 12 q a_i$ since $A$ is centered and the map $\T{w} \mapsto \phi(\T{w}) \cdot y$ is injective (as $y \in Y \subseteq X^\cH$). Thus Lemma \ref{LEM COUNT} and the fact that $7 p < 2^{17 \ell}$ implies that the set
$$\{ \pm u_i \: \T{u} \in 7 p \cdot A \wedge \phi(\T{u}) \cdot y \in \partial_i^\Phi(E_r, q \cdot A)\}$$
can be covered by fewer than $2 \cdot N_2$-many intervals of cardinality $24 q a_i + 1$, where
$$N_2 := 2^{22 \ell^2}.$$

Now letting $r \in J$ vary, clause (iii) requires that $d_i(y)$ avoid at most $2 b \cdot N_2$-many intervals of cardinality $152 \ell q a_i + 1 < 153 \ell q a_i$. From our assumptions on $b$, $q$, and $\epsilon$ we obtain
$$\begin{array}{ll}
  & \displaystyle{p a_i - 4 \cdot N_1 \cdot 7 a_i - 2 b \cdot N_2 \cdot 153 \ell q a_i} \\
= & \displaystyle{p a_i \cdot \left( 1 - 28 \cdot 84^\ell p^{-1} - 306 b \ell q \cdot 2^{22 \ell^2} p^{-1} \right)} \\
= & \displaystyle{p a_i \cdot \left(1 - 28 \left(\frac{84}{2^{14}} \right)^\ell - 306 b \ell q \cdot 2^{22 \ell^2-14\ell} \right)} \\
> & p a_i \cdot (1 - \frac{1}{6} - \frac{1}{4}) \ > \ \frac{1}{4} p a_i \ \geq \ 1.
\end{array}$$
Therefore there is a choice of $d_i(y) \in [p a_i, 2 p a_i] \cap \Z$ satisfying clauses (i), (ii), and (iii). \hfill $\blacksquare$
\vspace{0.1in}

With the above claim one can easily construct a (not necessarily Borel) function $\T{d}$ satisfying (i), (ii), and (iii). However we want $\T{d}$ to be continuous. To this end we apply Corollary \ref{COR PART}, using the symmetric closure of $\phi(13 p \cdot A)$ as $F$, to partition $Y$ into clopen sets $\{Y_1, Y_2, \ldots, Y_k\}$ with the property that for each $1 \leq j \leq k$ and each $y, y' \in Y_j$,
$$y' \in \phi(13 p \cdot A) \cdot y \ \Longrightarrow \ y' = y.$$
We will inductively define $\T{d}$ on the $Y_j$'s starting with $Y_1$. It is important to observe that if $y$ and $y'$ are as in clause (ii), then they must lie in distinct members of this partition. This means that for $y, y' \in Y_j$ the definitions of $\T{d}(y)$ and $\T{d}(y')$ will not conflict with one another. So in defining $\T{d}$ on $Y_1$ we only need to worry about clause (iii). For $1 \leq i \leq \ell$, $1 \leq r \leq s$, and $\T{u} \in 7 p \cdot A$, the set of $y \in Y_1$ with $\phi(\T{u}) \cdot y \in \partial_i^\Phi(E_r, q \cdot A)$ is clopen by Lemma \ref{LEM BNDCLOPEN} (recall that we are assuming that each $E_r$ is $G$-clopen). Thus for each $1 \leq i \leq \ell$, the set of potential values for $d_i(y)$ prohibited by clause (iii) varies continuously with $y \in Y_1$. So for $1 \leq i \leq \ell$ and $y \in Y_1$, we can define $d_i(y) \in [p a_i, 2 p a_i] \cap \Z$ to be the least value which is allowed by clause (iii). Such a value exists by the above claim, and the assignment thus defined is continuous. Now suppose inductively that $\T{d}$ has been defined on $Y_1 \cup \cdots \cup Y_{j-1}$ so that it satisfies (i), (ii), and (iii) and is continuous. We again observe that for $\T{u} \in 13 p \cdot A$ the set of $y \in Y_j$ with $\phi(\T{u}) \cdot y \in Y_1 \cup \cdots \cup Y_{j-1}$ is clopen since $G$ acts continuously. Furthermore, on this clopen set the values $\T{d}(\phi(\T{u}) \cdot y)$ vary continuously with $y$ by our inductive hypothesis. Thus for each $1 \leq i \leq \ell$, the set of potential values for $d_i(y)$ prohibited by clause (ii) varies continuously with $y \in Y_j$. And as previously argued, the set of potential values for $d_i(y)$ prohibited by clause (iii) varies continuously with $y \in Y_j$ as well. Therefore for $1 \leq i \leq \ell$ and $y \in Y_j$ we can define $d_i(y) \in [p a_i, 2 p a_i] \cap \Z$ to be the least value which is allowed by clauses (ii) and (iii). This assignment is well-defined by the claim above, and is continuous. By continuing in this manner we can define $\T{d}$ on all of $Y$ so that $\T{d}$ satisfies clauses (i), (ii), and (iii) and is continuous.

Now, as promised, for $y \in Y$ set
$$D_y := \Rect(\T{d}(y)) \quad \mbox{and} \quad R_y := \phi(D_y) \cdot y,$$ and also set $\theta := \epsilon / (36^2 p^2)$. Our assumption on $A$ and $\epsilon$ and the fact that $p\in\N$ gives $2 \cdot \rZ \sqsubseteq \theta p \cdot A = \theta \cdot (p \cdot A) \sqsubseteq \theta\cdot D_y$ and thus $R_y$ is $(\Phi, \theta)$-roughly $D_y$ at $y$. As indicated at the beginning of the proof, we let $E$ be the equivalence relation generated by the $R_y$'s. Specifically, define
$$x\mathrel{E}x' \ \Longleftrightarrow \ (\forall y\in Y)\,(x\in R_y\,\Longleftrightarrow\, x'\in R_y).$$

We now show that $E$ is $(\Phi, A, \epsilon)$-sub-rectangular by constructing the $(\Phi, A, \epsilon)$-rectangular sub-equivalence relation $F$ of $E$. In order to construct $F$, we proceed to divide the $E$-classes just as much as is necessary in order to obtain a rectangular sub-equivalence relation. The basic idea is that when $R_y$ meets $R_{y'}$ we wish to broaden the faces of $R_y$ %(not in a sweeping fashion but in a cutting fashion)
so that they divide $R_{y'} \setminus R_y$ into pieces. A convenient way to do this is to use a collection of large rough-rectangles surrounding $R_y$. For $y \in Y$ consider the centered rectangle having radius vector $9 \T{d}(y) + \T{1}$. After removing $D_y$ from this rectangle, we partition the resulting region into $3^\ell$-many rectangles which are indexed by tuples $\alpha \in \{-1, 0, 1\}^\ell$. Specifically, for $y \in Y$ and $\alpha \in \{-1, 0, 1\}^\ell$, define $D_y^\alpha$ to be the set of all vectors $\T{b} \in \Z^\ell \times \Gamma$ such that for each $1 \leq i \leq \ell$,
\[
\begin{array}{rccclll}
-9d_i(y)-1 & \leq & b_i & \leq & -d_i(y)-1 & & \mbox{when }\alpha_i=-1; \vspace{0.05in} \\
-d_i(y)    & \leq & b_i & \leq & d_i(y)    & & \mbox{when }\alpha_i=0;  \vspace{0.05in} \\
d_i(y)+1   & \leq & b_i & \leq & 9d_i(y)+1 & & \mbox{when }\alpha_i=1.
\end{array}
\]
Notice that $D_y^{\T{0}} = D_y$ and that the $D_y^\alpha$'s partition the centered rectangle having radius vector $9 \T{d}(y) + \T{1}$. We use $9 \T{d}(y) + \T{1}$ and not $9 \T{d}(y)$ simply because we want each $D_y^\alpha$ to have a genuine center in $\Z^\ell$ (as required by our non-standard definition of rectangle). Define $R_y^\alpha = \phi(D_y^\alpha) \cdot y$. Then $R_y^{\T{0}} = R_y$ and $R_y^\alpha$ is $(\Phi, \theta)$-roughly $D_y^\alpha$ at $y$ since $2 \cdot \rZ \sqsubseteq \theta \cdot D_y \sqsubseteq \theta \cdot D_y^\alpha$. Now we can define the equivalence relation $F$. As a preliminary step, for any $x\in X$ define the set
$$\mathcal{S}(x)=\{\,y\in Y\: (\exists y'\in Y)\; x\in R_{y'}\,\wedge\,R_y\cap R_{y'}\ne\varnothing\,\}.$$
For $x \in X$ and $x' \in X^\cH$ we declare $x\mathrel{F}x'$ if and only if $\mathcal{S}(x)=\mathcal{S}(x') \neq \varnothing$ and for all $y\in\mathcal{S}(x)$ and $\alpha\in\{-1,0,1\}^\ell$,
$$x \in R_y^\alpha \ \Longleftrightarrow \ x' \in R_y^\alpha.$$
We let $F$ be the smallest equivalence relation on $X$ having the above property. Note that, as required, any $F$-class not meeting $X^\cH$ is a singleton. From the definition it is easy to check that $F$ is a sub-equivalence relation of $E$ (by using $\alpha = \T{0}$).

Before beginning a detailed study of $F$, we make a few useful observations. Note that if $y,y'\in Y$ and $R_y\cap R_{y'}\ne\varnothing$ then
\begin{equation}\label{EQ 3}
 R_{y'} \ \subseteq \ \phi(D_{y'}) \phi(D_{y'})^{-1} \phi(D_y) \cdot y.
\end{equation}
Since each $D_y\sqsubseteq 2p\cdot A$, from this it follows that if $R_y\cap R_{y'}\ne\varnothing$ then
\begin{equation}\label{EQ 4}
R_{y'} \ \subseteq \ \phi(6p\cdot A+2\cdot\rZ)\cdot y.
\end{equation}
Alternatively, one can use \eqref{EQ 3} and the relation $D_{y'} \sqsubseteq 2 \cdot D_y$ to obtain
\begin{equation}\label{EQ 5}
R_{y'} \ \subseteq \ \phi(6 \cdot D_y) \cdot y.
\end{equation}
So if $x \in X$ and $y \in \mathcal{S}(x)$, then there is $y' \in Y$ with $x \in R_{y'}$ and $R_{y'} \cap R_y \neq \varnothing$. So $y \in \mathcal{S}(x)$ implies $x \in \phi(7p \cdot A) \cdot y$. In particular, if $y \in \mathcal{S}(x)$ then there is a unique $\alpha \in \{-1, 0, 1\}^\ell$ with $x \in R_y^\alpha$. This implies that every class $U$ of $F$ is the intersection of some finite subfamily of the $R_y^\alpha$'s. Furthermore, this means that the boundaries of an $F$-class $U$ come from nearby boundaries of the $R_y$'s. Thus from clauses (ii) and (iii) it will be possible to show that $F$ is rectangular and orthogonal to the pre-existing equivalence relations $E_1,\ldots, E_s$.

In this paragraph we temporarily assume that $G$ acts freely and $\cH = \bOne$, and we will show that $F$ is $G$-clopen as required. Note that under these assumptions we have $X^\cH = X$. For $g \in G$ and $\alpha \in \{-1, 0, 1\}^\ell$ set
$$Y_g^\alpha = \{y \in Y \: g \cdot y \in R_y^\alpha\}.$$
It follows from the continuity of $\T{d}$, the freeness of the action, and the definition of the $R_y^\alpha$'s that $Y_g^\alpha$ is clopen for every $g \in G$ and $\alpha \in \{-1, 0, 1\}^\ell$. When $\alpha = \T{0}$ we write $Y_g$ for $Y_g^\alpha$. For $x \in X$ set
$$C(x) \ = \ \{k \in \phi(7 p \cdot A) \: \exists y \in \mathcal{S}(x) \quad k \cdot y = x\}.$$
By the previous paragraph, for every $y \in \mathcal{S}(x)$ there is $k \in C(x)$ with $k \cdot y = x$. If $y \in \mathcal{S}(x)$ then there are $a, b, c \in G$ and $y' \in Y$ with $c \in \phi(D_y)$ and $a, b \in \phi(D_{y'})$ such that $c \cdot y = b \cdot y'$ and $a \cdot y' = x$. In this situation we have $y \in Y_c$, $y' \in Y_a \cap Y_b$, and $a b^{-1} c \cdot y = x$. Thus we see that $k \in C(x)$ if and only if
$$x \ \in \ \bigcup_{a,b,c}a \cdot \Big( Y_{a} \cap Y_b \cap b^{-1} c \cdot Y_c \Big)$$
where the union is over the set of all $a, b, c \in \phi(2 p \cdot A)$ with $k = a b^{-1} c$. This set is clopen, and since in all cases $C(x) \subseteq \phi(7 p \cdot A)$, we deduce that the set
$$X_K = \{x \in X \: C(x) = K\}$$
is clopen for any $K \subseteq \phi(7 p \cdot A)$. Notice that for $g \in G$ we have $\mathcal{S}(x) = \mathcal{S}(g \cdot x)$ if and only if $g C(x) = C(g \cdot x)$ (this follows from the action being free). Observe that if $k \cdot y = x$ then $x \in R_y^\alpha$ if and only if $k^{-1} \cdot x = y \in Y_k^\alpha$. Similarly, if $k \cdot y = x$ then $g \cdot x \in R_y^\alpha$ if and only if $k^{-1} \cdot x = y \in Y_{gk}^\alpha$. Therefore $x \ F \ g \cdot x$ if and only if there is a set $K \subseteq \phi(7 p \cdot A)$ such that $x \in X_K \cap g^{-1} \cdot X_{g K}$ and
$$x \in \bigcap_{k \in K} \bigcap_{\alpha \in \{-1, 0, 1\}^\ell} k \Big(Y_k^\alpha \cap Y_{g k}^\alpha \Big) \cup k \cdot \Big( (X \setminus Y_k^\alpha) \cap (X \setminus Y_{g k}^\alpha) \Big).$$
Thus $F$ is $G$-clopen as claimed.

It only remains to verify that $F$ is $(\Phi, A, \epsilon)$-rectangular and $(\Phi, q \cdot A)$-orthogonal to $E_1, E_2, \ldots, E_s$. It will be convenient to check these properties of $F$ one class at a time. So fix a class $U$ of $F$ meeting $X^\cH$. We ultimately want to use properties (ii) and (iii) of the function $\T{d}$, but first we must obtain a nice description of $U$. Fix $x \in U \cap X^\cH$. As remarked just after \eqref{EQ 5}, $U$ can be expressed as a finite intersection
$$U = \bigcap_{k = 1}^n R_{y(k)}^{\alpha(k)}$$
where $n \in \N$, $y(k) \in \mathcal{S}(x)\subseteq Y$ and $\alpha(k) \in \{-1, 0, 1\}^\ell$ for each $1 \leq k \leq n$. We may assume that if $y \in Y$ and $x \in R_y$ then there is $1 \leq k \leq n$ with $y(k) = y$ and $\alpha(k) = \T{0}$. For $1 \leq k \leq n$ let $\T{u}(k) \in \Z^\ell \times \Gamma$ be such that $\phi(\T{u}(k)) \cdot x = y(k)$, and note that each $\T{u}(k)\in 9p\cdot A$. By Lemma \ref{LEM SHIFT} each $R_{y(k)}^{\alpha(k)}$ is $(\Phi, 2 \theta)$-roughly $D_{y(k)}^{\alpha(k)} + \T{u}(k)$ at $x$. So in order to better understand $U$, we first consider the related intersection
$$\bigcap_{k = 1}^n \left( D_{y(k)}^{\alpha(k)} + \T{u}(k) \right).$$
For each $1 \leq i \leq \ell$, we may find
$$\mu_i, \nu_i \ \in \ \bigcup_{k = 1}^n \Big( \{u_i(k) \pm d_i(k)\} \cup \{u_i(k) \pm (d_i(k) + 1)\} \cup \{u_i(k) \pm (9 d_i(k) + 1)\} \Big)$$
so that
$$\bigcap_{k = 1}^n \Big( D_{y(k)}^{\alpha(k)} + \T{u}(k) \Big) \ = \ \{\T{b} \in \Z^\ell \times \Gamma \: \mu_i \leq b_i \leq \nu_i \text{ for all } 1 \leq i \leq \ell \}.$$
Recall that we work with a restricted definition of rectangle in which rectangles must have a genuine center in $\Z^\ell \times \Gamma$. Let $B \subseteq \Z^\ell \times \Gamma$ be the largest rectangle with
$$B \subseteq \bigcap_{k = 1}^n \Big( D_{y(k)}^{\alpha(k)} + \T{u}(k) \Big),$$
chosen for definiteness so that its center is furthest from the origin. Notice that $\bdim_i(B) = \lfloor (\nu_i - \mu_i) / 2 \rfloor$ for each $1 \leq i \leq \ell$. Our goal will be to show that $A\sqsubseteq B$ and $U$ is roughly $B$ at $x$.

First we will show that $A \sqsubseteq B$. Let $1\leq m\leq n$ be arbitrary, and fix $t$ such that $x \in R_{y(t)}$, $\alpha(t) = \T{0}$, and $R_{y(t)} \cap R_{y(m)} \neq \varnothing$. By \eqref{EQ 5} above we have
$$R_{y(t)} \ = \ \phi(D_{y(t)}) \cdot \phi(\T{u}(t)) \cdot x \ \subseteq \ \phi(6 \cdot D_{y(m)}) \cdot \phi(\T{u}(m)) \cdot x.$$
After using the equation $\phi(\T{u}(t)) \cdot x = y(t)$, we obtain
$$\begin{array}{lll}
\phi(D_{y(t)}) \cdot y(t) & \subseteq & \phi(6\cdot D_{y(m)})\,\phi(\T{u}(m))\,\phi(\T{u}(t))^{-1} \cdot y(t) \\
               & \subseteq & \phi(6\cdot D_{y(m)}+\T{u}(m)-\T{u}(t)+2\cdot\rZ) \cdot y(t),
\end{array}$$
and therefore $D_{y(t)} \subseteq 7 \cdot D_{y(m)} + \T{u}(m) - \T{u}(t)$ since $y(t) \in X^\cH$. This implies that
$$\bigcap_{k = 1}^n \Big( D_{y(k)}^{\alpha(k)} + \T{u}(k) \Big) \ \subseteq \ D_{y(t)} + \T{u}(t) \ \subseteq \ 7 \cdot D_{y(m)} + \T{u}(m),$$
so for every $1 \leq i \leq \ell$ we must have that $\mu_i, \nu_i \neq u_i(m) \pm (9 d_i(m) + 1)$. As this holds for every $1 \leq m \leq n$, we conclude that in fact
$$\mu_i, \nu_i \ \in \ \bigcup_{k = 1}^n \Big( \{u_i(k) \pm d_i(k)\} \cup \{u_i(k) \pm (d_i(k) + 1)\} \Big).$$
Now, we also have
$$x \ \in \ \phi(D_{y(t)}) \cdot y(t) \ \subseteq \ \phi(6 p \cdot A + 2 \cdot \rZ) \cdot y(m)$$
by \eqref{EQ 4} above. In particular, $x\in \phi(6 p \cdot A + 2 \cdot \rZ) \cdot y(m)$ for every $m$, so if $m \neq k$ then
$$\phi(6 p \cdot A + 2 \cdot \rZ) \cdot y(k) \ \cap \ \phi(6 p \cdot A + 2 \cdot \rZ) \cdot y(m) \ \neq \ \varnothing$$
and hence
$$y(k) \ \in \ \phi(12 p \cdot A + 5 \cdot \rZ) \cdot y(m) \ \subseteq \ \phi(13 p \cdot A) \cdot y(m).$$
But we also have $y(k) \in \phi(\T{u}(k) - \T{u}(m) + \rZ) \cdot y(m)$, so since $\rZ \subseteq \epsilon \cdot A$ clause (ii) implies that the numbers $u_i(k) \pm d_i(k)$ and $u_i(m) \pm d_i(m)$ are separated by at least $3 a_i - \epsilon a_i > 2 a_i + 2$ for every $1 \leq i \leq \ell$. This holds for all $m \neq k$, and thus we deduce that $|\mu_i - \nu_i| \geq 2 a_i$ for all $1 \leq i \leq \ell$. Therefore $\bdim_i(B) \geq a_i$ for each $1 \leq i \leq \ell$, and hence $A \sqsubseteq B$ as claimed.

Now we show that $U$ is $(\Phi, \delta)$-roughly $B$ at $x$, where $\delta := \epsilon / (18 p)$, and moreover that $F$ is $(\Phi, A, \epsilon)$-rectangular. Note that since $B \sqsupseteq A$, our assumption on $A$ and $\epsilon$ gives
$$\delta \cdot B \ \sqsupseteq \ \frac{\epsilon}{18 p} \cdot A \ \sqsupseteq \ 2 \cdot \rZ.$$
So $U$ will indeed be $(\Phi, \delta)$-roughly $B$ at $x$ once we show that $\phi((1 - \delta) \cdot B) \cdot x \subseteq U \subseteq \phi((1 + \delta) \cdot B) \cdot x$. Since $U=\bigcap_{k=1}^nR^{\alpha(k)}_{y(k)}$ and each $R_{y(k)}^{\alpha(k)}$ is $(\Phi, 2 \theta)$-roughly $D_{y(k)}^{\alpha(k)} + \T{u}(k)$ at $x$, we have
$$\bigcap_{k=1}^n\left(\phi\left((1-2\theta)\cdot D^{\alpha(k)}_{y(k)}+\T{u}(k)\right)\cdot x\right) \ \subseteq \ U \ \subseteq \ \bigcap_{k=1}^n\left(\phi\left((1+2\theta)\cdot D^{\alpha(k)}_{y(k)}+\T{u}(k)\right)\cdot x\right).$$
As the map $\T{w} \mapsto \phi(\T{w}) \cdot x$ is injective, we may rearrange intersections to obtain
$$\phi \left( \bigcap_{k = 1}^n \Big( (1 - 2 \theta) \cdot D_{y(k)}^{\alpha(k)} + \T{u}(k) \Big) \right) \cdot x \ \subseteq \ U \ \subseteq \ \phi \left( \bigcap_{k = 1}^n \Big( (1 + 2 \theta) \cdot D_{y(k)}^{\alpha(k)} + \T{u}(k) \Big) \right) \cdot x.$$
For each $1 \leq k \leq n$ we have
$$\frac{\delta}{2} \cdot B \ \sqsupseteq \ \frac{\epsilon}{36p} \cdot A \ = \ (2 \theta \cdot 9 \cdot 2 p) \cdot A \ = \ 2 \theta \cdot (9 \cdot 2 p \cdot A) \ \sqsupseteq \ 2 \theta \cdot D_{y(k)}^{\alpha(k)} \ \sqsupseteq \ \rZ \ \sqsupseteq \ \Rect(\T{1}).$$
Thus $\delta \cdot B \sqsupseteq 2 \theta \cdot D_{y(k)}^{\alpha(k)} + \Rect(\T{1})$, and by definition of $B$,
$$B \ \subseteq \ \bigcap_{k = 1}^n \Big( D_{y(k)}^{\alpha(k)} + \T{u}(k) \Big) \ \subseteq \ B + \Rect(\T{1}),$$
and therefore we deduce that
$$\begin{array}{rcll}
(1 - \delta) \cdot B & \subseteq & \displaystyle{\bigcap_{k = 1}^n \Big( (1 - 2 \theta) \cdot D_{y(k)}^{\alpha(k)} + \T{u}(k) \Big)}  & \text{and} \\
(1 + \delta) \cdot B & \supseteq & \displaystyle{\bigcap_{k = 1}^n \Big( (1 + 2 \theta) \cdot D_{y(k)}^{\alpha(k)} + \T{u}(k) \Big).} &
\end{array}$$
It follows that
$$\phi((1 - \delta) \cdot B) \cdot x \ \subseteq \ U \ \subseteq \ \phi((1 + \delta) \cdot B) \cdot x,$$
as required. So $U$ is $(\Phi, \delta)$-roughly $B$ at $x$. We already verified that $A \sqsubseteq B$, and since $B \sqsubseteq 2p \cdot A$ we see that $2 \delta \cdot B \sqsubseteq \epsilon \cdot A$. So in order for $F$ to be rectangular it only remains to check that $2^{22 \ell} \cdot B \subseteq \dom(\phi)$. For any $1 \leq k \leq n$ with $\alpha(k) = 0$ we have $D_{y(k)} \subseteq 2 p \cdot A$ and $\T{u}(k) \in 9p \cdot A$ and thus
$$B \subseteq D_{y(k)} + \T{u}(k) \ \subseteq \ 2p \cdot A + 9p \cdot A \ \subseteq \ 11p \cdot A.$$
Therefore
$$2^{22 \ell} \cdot B \ \subseteq \ 2^{22\ell}\cdot (11p\cdot A) \ \subseteq \ (2^{22\ell}\cdot 11\cdot 2^{14\ell})\cdot A \ \subseteq \ 2^{40\ell}\cdot A \ \subseteq \ \dom(\phi).$$
We conclude that $F$ is $(\Phi, A, \epsilon)$-rectangular.

The orthogonality condition is now all that remains to check. We continue working with $U$, $x$, and $B$ as above. Fix $1 \leq r \leq s$, and towards a contradiction suppose there exist $1 \leq i \leq \ell$, $z \in \partial_i^\Phi(F, q \cdot A) \cap U$, and $z' \in \partial_i^\Phi(E_r, q \cdot A)$ with
$$\phi(30 \ell q \cdot A) \cdot z \ \cap \ \phi(30 \ell q \cdot A) \cdot z' \ \neq \ \varnothing.$$
Let $\T{c}$ be the center of $B$, and set $\T{b} = \bdim(B)$. Since $z \in U$, there is $\T{w} \in (1 + \delta) \cdot B$ such that $z = \phi(\T{w}) \cdot x$. By Lemma \ref{LEM FACES} there is $j = \pm 1$ such that $w_i$ is within $2 q a_i$ of $j b_i + c_i$. By definition of $B$, $\mu_i$, and $\nu_i$, we have that $j b_i + c_i$ equals either $\mu_i$, $\mu_i + 1$, $\nu_i$, or $\nu_i - 1$. Therefore we can find $1 \leq k \leq n$ such that $j b_i + c_i$ is within a distance of two of either $u_i(k) + d_i(y(k))$ or $u_i(k) - d_i(y(k))$. Hence there is $j' = \pm 1$ such that $w_i$ is within $3 q a_i$ of $u_i(k) + j' d_i(y(k))$. Now consider $\T{v}$ such that $\phi(\T{v}) \cdot y(k) = z'$. We have that $z'$ belongs to
$$\phi \Big( 30 \ell q \cdot A \Big)^{-1} \phi \Big( 30 \ell q \cdot A \Big) \phi \Big( \T{w} \Big) \phi \Big( \T{u}(k) \Big)^{-1} \cdot y(k)$$
and therefore
$$\T{v} \ \in \ 60 \ell q \cdot A + \T{w} - \T{u}(k) + 3 \cdot \rZ.$$
It follows that $v_i$ is within $64 \ell q a_i$ of $j' d_i(y(k))$. In order for this to contradict clause (iii), we must show that $z' \in \phi(7p \cdot A) \cdot y(k)$. This fact is quickly obtainable as there is $y \in Y$ with $z \in R_y$ and $R_y \cap R_{y(k)} \neq \varnothing$, and hence by \eqref{EQ 4}
$$\begin{array}{lll}
z' & \in       & \phi(30 \ell q \cdot A)^{-1} \phi(30 \ell q \cdot A) \cdot z \vspace{.03in} \\
   & \subseteq & \phi(30 \ell q \cdot A)^{-1} \phi(30 \ell q \cdot A) \phi(6p \cdot A + 2 \cdot \rZ) \cdot y(k) \vspace{.03in} \\
   & \subseteq & \phi(60 \ell q \cdot A + 6 p \cdot A + 4 \cdot \rZ) \cdot y(k) \vspace{.03in} \\
   & \subseteq & \phi(7p \cdot A) \cdot y(k).
\end{array}$$
We conclude that $F$ is $(\Phi, q \cdot A)$-orthogonal to each of the equivalence relations $E_1, E_2, \ldots, E_s$.
\end{proof}

\section{Stabilizers and the subgroup conjugacy relation} \label{SEC STAB}

Before proceeding to the proof of the main theorem in the next section, we discuss here some issues surrounding non-free actions and the subgroup conjugacy relation. The fact that it is often easier to work with free actions than it is to work with general ones has become something of a recurring theme in the project of determining which countable groups have hyperfinite orbit equivalence relations. In proving Thereom \ref{THM FREE} in the next section, where we assume the action to be free, we will be able to structure our proof in a manner that closely resembles the arguments of Gao-Jackson in \cite{GJ}. However, the proof of the general case (Theorem \ref{THM NFREE}) appears to require new techniques due to an obstacle that appears for the first time in considering nilpotent groups: namely, the complexity of the subgroup conjugacy relation. In this section we will briefly discuss subgroup conjugacy and its impact on the relationship between the free-action and general cases. We remark that, despite their short proofs, we do not know if either of Propositions \ref{LEM STABRED} or \ref{THM NONSMOOTH} has previously appeared in the literature.

For a countable group $G$, let $\Sub(G)$ denote the space of subgroups of $G$ with the relative topology as a subset of the product space $2^G = \{0,1\}^G$. Then $\Sub(G)$ is closed in $2^G$, and hence is itself a Polish space. The subgroups $H, L \in \Sub(G)$ are \emph{conjugate} if there is $g \in G$ with $g H g^{-1} = L$. Conjugacy is a Borel equivalence relation on $\Sub(G)$, called the \emph{subgroup conjugacy relation} of $G$. If $G$ acts in a Borel fashion on the standard Borel space $X$, then the stabilizer map $x\mapsto\Stab(x)\in\Sub(G)$ is a Borel (though in general not continuous) function taking orbit-equivalent points to conjugate subgroups. In some cases smoothness of the subgroup conjugacy relation for a class of groups reduces the hyperfiniteness question for their orbit equivalence relations to a consideration of just their \emph{free} actions.

\begin{prop} \label{LEM STABRED}
Let $\mathcal{G}$ be a class of countable groups and $\mathcal{E}$ a class of countable Borel equivalence relations such that:
\begin{enumerate}
 \item[(i)] $\mathcal{G}$ is closed under subgroups and quotients;
 \item[(ii)] any group in $\mathcal{G}$ has at most countably many subgroups; and
 \item[(iii)] $\mathcal{E}$ is closed under Borel reducibility and countable disjoint unions.
\end{enumerate}
If $E^X_G\in\mathcal{E}$ for every free Borel action $G \acts X$ with $G \in \mathcal{G}$, then $E^X_G\in\mathcal{E}$ for every Borel action $G \acts X$ with $G \in \mathcal{G}$.
\end{prop}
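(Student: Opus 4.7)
The plan is to reduce the problem to free actions by partitioning $X$ according to the stabilizer-conjugacy class. Let $G \in \mathcal{G}$ act in a Borel fashion on $X$. By hypothesis (ii), $\Sub(G)$ is countable, hence so is the set of subgroup conjugacy classes of $G$. Since the stabilizer map $x \mapsto \Stab(x)$ is Borel, for each conjugacy class $\mathcal{K} \subseteq \Sub(G)$ the $G$-invariant set $X_\mathcal{K} = \{x \in X : \Stab(x) \in \mathcal{K}\}$ is Borel. These sets partition $X$, so $E_G^X = \bigsqcup_\mathcal{K} \big( E_G^X \res X_\mathcal{K} \big)$ is a countable disjoint union. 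By closure of $\mathcal{E}$ under countable disjoint unions, it therefore suffices to show that each $E_G^X \res X_\mathcal{K}$ lies in $\mathcal{E}$.

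Fix a conjugacy class $\mathcal{K}$ and pick a representative $H \in \mathcal{K}$. Let $X_H = \{x \in X : \Stab(x) = H\}$, a Borel subset of $X_\mathcal{K}$. Note that $X_H$ is $N_G(H)$-invariant and meets every $G$-orbit contained in $X_\mathcal{K}$: indeed, if $\Stab(x) = g H g^{-1}$ then $\Stab(g^{-1}\cdot x) = H$. Enumerating $G = \{g_0, g_1, \dots\}$, define $\pi: X_\mathcal{K} \to X_H$ by $\pi(x) = g_{n(x)} \cdot x$, where $n(x)$ is least with $g_{n(x)} \cdot x \in X_H$. The sets $\{x : n(x) = m\}$ are Borel, so $\pi$ is Borel. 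I claim that $\pi$ is a Borel reduction from $E_G^X \res X_\mathcal{K}$ to the restriction $E_G^X \res X_H$. Preservation of equivalence follows from $\pi(x) \in G \cdot x$. For the converse, if some $g \in G$ satisfies $g \cdot \pi(x) = \pi(y)$ with $\pi(x), \pi(y) \in X_H$, then $g H g^{-1} = \Stab(\pi(y)) = H$ forces $g \in N_G(H)$. Hence the image relation is exactly $E_{N_G(H)}^{X_H}$.

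Finally, since every element of $H$ fixes every point of $X_H$, the $N_G(H)$-action on $X_H$ descends to a well-defined Borel action of the quotient $N_G(H)/H$, which is now \emph{free} and has the same orbits as the original $N_G(H)$-action. By hypothesis (i) the group $N_G(H)/H$ lies in $\mathcal{G}$, so the hypothesis of the proposition gives $E_{N_G(H)}^{X_H} = E_{N_G(H)/H}^{X_H} \in \mathcal{E}$. Closure of $\mathcal{E}$ under Borel reducibility then yields $E_G^X \res X_\mathcal{K} \in \mathcal{E}$, completing the argument. No real obstacle arises here: condition (ii) is precisely what makes the subgroup conjugacy relation of $G$ smooth, allowing Borel selection of a representative stabilizer on each orbit by a trivial enumeration of $G$. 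It is the failure of this smoothness for general countable nilpotent groups that motivates the more delicate analysis carried out in Section \ref{SEC STAB} and in the proof of Theorem \ref{THM NFREE}.
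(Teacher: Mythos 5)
Your proof is correct and follows essentially the same route as the paper's: decompose $X$ along stabilizer-conjugacy classes, reduce each piece to the Borel set of points with a fixed stabilizer $H$, observe that the restricted relation is the orbit relation of the free $N_G(H)/H$-action, and invoke the hypotheses. The only cosmetic difference is the order in which the reduction map and the free-action identification are presented.
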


\begin{proof}
Let $\mathcal{G}$ and $\mathcal{E}$ satisfy the hypotheses of the lemma and fix $G\in\mathcal{G}$ and a Borel action of $G$ on some standard Borel space $X$. Write $E=E^X_G$. Since $\Sub(G)$ is countable, the subgroup conjugacy relation is smooth. For each conjugacy class $\mathcal{C}\subseteq\Sub(G)$, fix $H_{\mathcal{C}}\in\mathcal{C}$. Let $Y_{\mathcal{C}}$ be the set of $x\in X$ whose stabilizer belongs to $\mathcal{C}$, so that $X=\bigsqcup_\mathcal{C} Y_{\mathcal{C}}$ is a decomposition of $X$ into countably many $E$-invariant Borel sets. By (iii), we need only show that $E\res Y_\mathcal{C}$ belongs to $\mathcal{E}$ for every conjugacy class $\mathcal{C}$ in $\Sub(G)$. So fix a conjugacy class $\mathcal{C}$ and consider $E\res Y_\mathcal{C}$. Write $H=H_\mathcal{C}$, and let $Y_H\subseteq Y_\mathcal{C}$ be the set of $x\in X$ whose stabilizer equals $H$. Note that $Y_H$ is a Borel set meeting each $E$-class in $Y_\mathcal{C}$. Let $N$ be the normalizer of $H$ in $G$. Then $N \in \mathcal{G}$ and $N/H \in \mathcal{G}$ by (i). Furthermore, $N/H$ acts freely on $Y_H$ and $E\res Y_H =E^{Y_H}_{N/H}$. Thus by our hypotheses, $E\res Y_H\in\mathcal{E}$. Now for each $x\in Y_{\mathcal{C}}$ let $f(x)=g\cdot x$ where $g$ is least (in some fixed well-ordering of $G$) such that $g \cdot x \in Y_H$. Then $f$ is a Borel reduction from $E\res Y_\mathcal{C}$ to $E\res Y_H$. Hence $E\res Y_\mathcal{C}\in\mathcal{E}$ by (iii). \end{proof}

We mention an immediate but interesting corollary of this lemma. Recall that a group is \emph{polycyclic} if it admits a cyclic series, or equivalently if it is solvable and every subgroup is finitely generated.

\begin{cor}
If all free Borel actions of polycyclic groups have hyperfinite orbit equivalence relations, then all Borel actions of polycyclic groups have hyperfinite orbit equivalence relations. \label{COR POLYCYCLIC}
\end{cor}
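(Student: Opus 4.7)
The plan is simply to apply Proposition \ref{LEM STABRED} with $\mathcal{G}$ taken to be the class of polycyclic groups and $\mathcal{E}$ the class of hyperfinite Borel equivalence relations. The entire content of the proof is therefore the verification of the three hypotheses (i)--(iii) of Proposition \ref{LEM STABRED} for this choice; once these are in place, the conclusion of the proposition literally is the conclusion of the corollary.

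For hypothesis (i), I would note that the class of polycyclic groups is closed under subgroups and quotients. Closure under quotients is immediate by pushing a cyclic series forward along the canonical surjection. Closure under subgroups follows from the standard argument that if $\{1_G\} = G_0 \lhd G_1 \lhd \cdots \lhd G_n = G$ is a cyclic series and $H \leq G$, then $\{H \cap G_i\}$ is a subnormal series in $H$ whose factors $(H \cap G_{i+1})/(H \cap G_i)$ embed into the cyclic groups $G_{i+1}/G_i$, hence are themselves cyclic.

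For hypothesis (ii), I would use the equivalent characterization of polycyclic groups as solvable groups in which every subgroup is finitely generated. Since a polycyclic group $G$ is countable, it has only countably many finite tuples of elements, so it can contain at most countably many finitely generated subgroups; therefore $\Sub(G)$ is countable. For hypothesis (iii), closure of the class of hyperfinite Borel equivalence relations under Borel reducibility and under countable disjoint unions was recorded in the discussion following the Dougherty--Jackson--Kechris classification in Section \ref{SEC PRELIM}.

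There is no real obstacle here: the corollary is a direct instantiation of Proposition \ref{LEM STABRED}, and everything reduces to the two well-known facts about polycyclic groups (closure under subgroups and quotients, and finite generation of every subgroup). Accordingly the proof will be only a few lines long, essentially saying ``apply Proposition \ref{LEM STABRED}, and verify the hypotheses as above.''
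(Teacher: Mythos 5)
Your proposal is correct and follows exactly the route the paper intends: the paper states this corollary as an ``immediate'' consequence of Proposition \ref{LEM STABRED} and supplies no written proof, relying on the reader to verify precisely the three hypotheses you check (closure of polycyclic groups under subgroups and quotients, countability of $\Sub(G)$ via finite generation of all subgroups, and the Dougherty--Jackson--Kechris closure properties of hyperfinite relations recalled in Section \ref{SEC PRELIM}). All three verifications are accurate, so nothing is missing.
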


The argument given in the proof of Proposition \ref{LEM STABRED} runs into at least two obstacles if there are groups in $\mathcal{G}$ with uncountably many subgroups. First, if $\Sub(G)$ has uncountably many conjugacy classes then hyperfiniteness of $E^X_G$ does not follow immediately from the hyperfiniteness of each $E^X_G\res Y_\mathcal{C}$, since the decomposition $X=\bigsqcup Y_\mathcal{C}$ has uncountably many pieces. Nevertheless, if one has a sufficiently constructive way of showing that $E^X_G\in\mathcal{E}$ for every free Borel action $G\acts X$ with $G\in\mathcal{G}$, then one may be able to piece together the uncountably many individual reductions in a Borel manner so as to obtain a global reduction. (This is precisely what Gao and Jackson do in Section 7 of \cite{GJ}). However, in order even to do this, one must still be able choose in a Borel manner a distinguished representative from each conjugacy class of subgroups. If the subgroup conjugacy relation is non-smooth, then there is no way to do this. Unfortunately this is already the situation for nilpotent groups.

\begin{prop}
There exists a countable nilpotent group $G$ of nilpotency class $2$ such that the subgroup conjugacy relation of $G$ is non-smooth. \label{THM NONSMOOTH}
\end{prop}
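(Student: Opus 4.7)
The plan is to build a countable nilpotent group $G$ of class $2$ as an infinite direct sum $G = \bigoplus_{n \in \N} N_n$ of copies of a small finite nilpotent group $N$ of class $2$, and then to exhibit a continuous map $\phi : 2^\N \to \Sub(G)$ that is a Borel reduction from $E_0$ to the conjugacy relation on $\Sub(G)$. Since $E_0$ is non-smooth, this will immediately give non-smoothness of the subgroup conjugacy relation of $G$. The use of a direct sum (as opposed to a direct product) is essential here, because it forces any conjugating element of $G$ to have finite support, which is what will encode eventual agreement of binary sequences.

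For the finite building block I would take $N$ to be the Heisenberg group of $3 \times 3$ upper-unitriangular matrices over $\mathbb{F}_2$: a non-abelian group of order $8$, of nilpotency class $2$, with center $Z(N) = \langle z \rangle$ of order $2$. Picking a non-central element $x \in N$ and some $y \in N$ with $[y,x] = z$, the cyclic subgroup $L := \langle x \rangle$ of order $2$ has exactly one nontrivial conjugate $y L y^{-1} = \langle x z \rangle \ne L$. Letting $L_n^0$ and $L_n^1$ denote the corresponding copies inside $N_n$, I would define
\[
\phi(w) \;=\; \bigoplus_{n \in \N} L_n^{w(n)} \qquad (w \in 2^\N).
\]
Each $\phi(w)$ is a genuine (abelian) subgroup of $G$, and $\phi$ is manifestly continuous into $\Sub(G) \subseteq 2^G$: for any fixed $g \in G$, which has finite support, whether $g \in \phi(w)$ depends on only finitely many coordinates of $w$.

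The substantive step is verifying that $\phi$ reduces $E_0$ to conjugacy. The easy direction is that if $w \mathrel{E_0} w'$ then $w$ and $w'$ disagree on only finitely many coordinates, and on each such coordinate one chooses inside $N_n$ an element conjugating $L_n^{w(n)}$ to $L_n^{w'(n)}$; assembling these coordinate conjugators produces a finitely-supported element of $G$ that conjugates $\phi(w)$ to $\phi(w')$. The point requiring care---and the main obstacle---is the converse. If $h = (h_n) \in G$ satisfies $h \phi(w) h^{-1} = \phi(w')$, then since conjugation in a direct sum acts coordinatewise one has $h_n L_n^{w(n)} h_n^{-1} = L_n^{w'(n)}$ for every $n$. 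Here the crucial fact is that $h \in G = \bigoplus_n N_n$ has only finitely many non-identity components, so for all but finitely many $n$ we get $L_n^{w(n)} = L_n^{w'(n)}$, and hence $w(n) = w'(n)$ since $L_n^0 \ne L_n^1$; thus $w \mathrel{E_0} w'$. Since $\phi$ is also plainly injective, this completes the reduction and yields the desired non-smoothness.
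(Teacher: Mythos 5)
Your proof is correct and follows essentially the same approach as the paper's: both take a direct sum of Heisenberg groups, encode each bit of $w \in 2^\N$ by choosing between a cyclic subgroup and its unique nontrivial conjugate in the corresponding summand, and use the finite support of elements of the direct sum to force eventual agreement in the converse direction. The only difference is cosmetic --- you use the finite Heisenberg group over $\mathbb{F}_2$ whereas the paper uses the integral Heisenberg group --- and your variant has the mild bonus that the resulting $G$ is a locally finite $2$-group.
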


\noindent By our main theorem, of course, the subgroup conjugacy relation of $G$ is hyperfinite whenever $G$ is countable and locally nilpotent.

\begin{proof}
For each $i \in \N$ let
$$\Gamma_i = \langle a_i, b_i, c_i \ \ | \ \ [a_i, b_i] = c_i, [a_i, c_i] = 1_{\Gamma_i}, [b_i, c_i] = 1_{\Gamma_i} \rangle$$
be a copy of the discrete Heisenburg group. Note that $\Gamma_i$ is nilpotent of class $2$ since $\langle c_i \rangle \cong \Z$ is central and $\Gamma_i / \langle c_i \rangle \cong \Z \times \Z$ is abelian. Note also that $\langle a_i^{-1} c_i \rangle \neq \langle a_i^{-1} \rangle$ and that $b_i a_i^{-1} b_i^{-1} = a_i^{-1} c_i$.

Set
$$G = \bigoplus_{i \in \N} \Gamma_i.$$
Then $G$ is a countable nilpotent group of nilpotency class $2$. For each $i \in \N$ let $\pi_i : G \rightarrow \Gamma_i$ be the natural projection. We will embed $E_0$ into the subgroup conjugacy relation of $G$. For $x \in 2^\N$ set
$$H_x = \langle a_i^{-1} c_i^{x(i)} : i \in \N \rangle.$$
The map $x \mapsto H_x$ is Borel, in fact continuous. First suppose that $x \ E_0 \ y$. Set
$$g = b_0^{y(0) - x(0)} \cdot b_1^{y(1) - x(1)} \cdots b_i^{y(i) - x(i)} \cdots.$$
Then $g \in G$ since $x \ E_0 \ y$, and we have $g H_x g^{-1} = H_y$. On the other hand, suppose that $\neg\, x \ E_0 \ y$. Fix any $g \in G$. Then for all but finitely many $i \in \N$ we have $\pi_i(g) = 1_{\Gamma_i}$. So there is $i \in \N$ with $x(i) \neq y(i)$ and $\pi_i(g) = 1_{\Gamma_i}$ and hence
$$\pi_i(g H_x g^{-1}) = \pi_i(g) \pi_i(H_x) \pi_i(g^{-1}) = \langle a_i^{-1} c_i^{x(i)} \rangle \neq \langle a_i^{-1} c_i^{y(i)} \rangle = \pi_i(H_y).$$
Thus $H_x$ and $H_y$ are not conjugate.
\end{proof}

Proposition \ref{THM NONSMOOTH} points to a new difficulty in establishing the hyperfiniteness of orbit equivalence relations arising from non-free actions. For those classes of groups previously known to have only hyperfininte orbit equivalence relations, the subgroup conjugacy relation is smooth. For finitely-generated nilpotent-by-finite groups, this is due to the fact that such groups have only countably many subgroups. Although Jackson--Kechris--Louveau did not use this fact in \cite{JKL}, Proposition \ref{LEM STABRED} immediately shows that in order to obtain hyperfiniteness of orbit equivalence relations arising from general Borel actions of finitely-generated nilpotent-by-finite groups, it suffices to consider only free Borel actions of such groups. On the other hand countable abelian groups might have uncountably many subgroups, but the subgroup conjugacy relation for such groups is trivial, in particular smooth, and Gao-Jackson \cite{GJ} used the method described in the paragraph following Corollary \ref{COR POLYCYCLIC} in a critical way.

In the final stages of their proof, Gao and Jackson performed a multi-scale inductive construction of a two-dimensional array of equivalence relations. The scales they used in this construction were linearly ordered and grew extremely fast. Consequently, perturbations at a given scale became unnoticeable at larger ones, allowing them to work at certain scales without disturbing others. We will perform a very similar construction in the proof of Theorem \ref{THM FREE}, where we work with free actions. We consider general actions in Theorem \ref{THM NFREE}, and in order to overcome the non-smoothness of the subgroup conjugacy relation we perform a multi-scale inductive construction of a three-dimensional rather than a two-dimensional array of equivalence relations. As a result we will lose the linear order on the scales, and any adjustments we make within the scope of one scale will significantly alter what can be detected at other scales. In this sense the proof of Theorem \ref{THM NFREE} is more delicate than the proof of Theorem \ref{THM FREE} and departs more significantly from the constructions in \cite{GJ}.

\section{Borel actions of countable locally nilpotent groups} \label{SEC FINAL}

We now have all of the tools we need to prove that every Borel action of a countable locally nilpotent group gives rise to a hyperfinite orbit equivalence relation. As a warm up, we first prove this result under the assumption that the action is free. The proof is simpler under this assumption and closely resembles the Gao--Jackson proof \cite{GJ} for actions of abelian groups.

\begin{thm} \label{THM FREE}
If the countable locally nilpotent group $G$ acts freely and continuously on the zero-dimensional Polish space $X$, then the induced orbit equivalence relation $E^X_G$ is continuously embeddable into $E_0$.
\end{thm}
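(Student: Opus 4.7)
The plan is to apply Lemma~\ref{LEM E0} to a carefully constructed sequence of finite, $G$-clopen equivalence relations whose eventual-agreement relation (in the sense of that lemma) equals $E^X_G$; the resulting map will then be the desired continuous embedding of $E^X_G$ into $E_0$.

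Because $G$ is locally nilpotent, write $G = \bigcup_n H_n$ as an increasing union of finitely generated nilpotent subgroups with $1_G \in H_1$, and fix a finite generating set for each $H_n$. Use Lemma~\ref{LEM EXIST} to produce for each $n$ a chart $\Phi_n = (\ell_n, \phi_n, \rZ_n, \Gamma_n, \bOne)$ for $H_n$ with $\phi_n(\rZ_n)$ containing the chosen generators. Also pick centered rectangles $A_n$ and positive constants $\epsilon_n, q_n$ satisfying the hypotheses of Lemmas~\ref{LEM MAIN}, \ref{LEM ORTHOSEQ}, and \ref{LEM STRONG BND}, arranged so that the scales at level $n+1$ dominate those at level $n$, and in particular so that every $(\Phi_m, A_m, \epsilon_m)$-rectangular relation is automatically $(\Phi_n, A_n, \epsilon_n)$-sub-rectangular whenever $n > m$.

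Next, in a multi-scale induction paralleling the Gao--Jackson argument for abelian groups, use Lemma~\ref{LEM MAIN} to build for each $n$ an infinite sequence $F_{n,1}, F_{n,2}, \ldots$ of finite, $G$-clopen, $(\Phi_n, A_n, \epsilon_n)$-rectangular equivalence relations contained in $E^X_{H_n}$, pairwise $(\Phi_n, q_n \cdot A_n)$-orthogonal at chart $\Phi_n$. $G$-clopenness follows from Lemma~\ref{LEM GHCLOPEN} together with the fact that each $F_{n,k} \subseteq E^X_{H_n}$, and the combinatorial hypothesis required by Lemma~\ref{LEM MAIN} at each step follows from the already-established orthogonality combined with Lemma~\ref{LEM STRONG BND}, which bounds by $\ell_n$ the number of previously built equivalence relations with a boundary incident to any $\phi_n$-ball. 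Enumerate the family into a single sequence $F^*_m = F_{n(m), k(m)}$ and appeal to Lemma~\ref{LEM ORTHOSEQ} applied at each chart $\Phi_n$ (using that $\phi_n(\rZ_n)$ generates $H_n$): for each pair $(x,y)=(x, g\cdot x) \in E^X_G$ with $g \in H_N$, freeness forces any $F_{n,k}$ with $n < N$ to disagree on $(x,y)$, but only finitely many $F^*_m$ have $n(m) < N$; for $n(m) \geq N$, Lemma~\ref{LEM ORTHOSEQ} supplies a finite exceptional set of $k$-values, which the enumeration is arranged to avoid cofinally, yielding eventual agreement.

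The main obstacle is twofold. First, the multi-scale construction must be arranged so that the scale parameters $(A_n, \epsilon_n, q_n)$ propagate coherently across charts, ensuring Lemma~\ref{LEM MAIN} remains invokable at every stage without loss of rectangularity or orthogonality. Second, and more delicately, the enumeration of $\{F_{n,k}\}$ must be designed so that the chart-index condition (eventually exceeding any threshold $N$) and the within-chart condition (eventually avoiding each finite exceptional set produced by Lemma~\ref{LEM ORTHOSEQ}) can hold together for every orbit-pair. This is the combinatorial heart of the construction, mirroring the delicate interleaving that underlies the Gao--Jackson argument, now carried out through the chart machinery in the nilpotent setting.
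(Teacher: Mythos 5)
Your outline correctly identifies the ingredients (charts from Lemma~\ref{LEM EXIST}, orthogonal rectangular families from Lemma~\ref{LEM MAIN}, eventual agreement via Lemma~\ref{LEM ORTHOSEQ}, and a continuous embedding via Lemma~\ref{LEM E0}), but the way you propose to assemble them has a genuine structural gap. You build, for each fixed $n$, an infinite orthogonal sequence $F_{n,1},F_{n,2},\dots$ with $F_{n,k}\subseteq E^X_{H_n}$, and then interleave these into a single sequence $F_m^*$. For Lemma~\ref{LEM E0} to produce $E^X_G$, the induced relation $F$ (eventual agreement along the $F_m^*$) must coincide with $E^X_G$. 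But if $y=g\cdot x$ with $g\in H_N\setminus H_n$ for some fixed $n<N$, then $\neg\,x\ F_{n,k}\ y$ for \emph{every} $k$, by freeness and $F_{n,k}\subseteq E^X_{H_n}$. So either the interleaving revisits scale $n$ infinitely often --- in which case $x$ and $y$ disagree cofinally and $F\neq E^X_G$ --- or only finitely many $F_m^*$ come from each row, in which case you have no infinite orthogonal family at any fixed scale and Lemma~\ref{LEM ORTHOSEQ} gives you nothing. There is no enumeration that threads this needle while keeping each member confined to a fixed $E^X_{H_n}$.

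The paper's resolution, which your outline misses, is that the sequence fed into Lemma~\ref{LEM E0} is \emph{not} constructed scale-by-scale and then interleaved. Instead the paper builds a two-dimensional array $\{E_{k,n}\}_{k\le n}$, where the bottom row $E_{1,n}$ has small ($\Phi_1$-scale) classes but is contained in $E^X_{G_n}$ (not $E^X_{G_1}$); this is achieved by starting each column with a $\Phi_n$-scale relation $E_{n,n}\subseteq E^X_{G_n}$ and refining downward via selectors $\sigma_k$ for auxiliary rectangular relations $F_k$. The vertical coupling (clause (iv) in the paper) is also what makes the bounding hypothesis of Lemma~\ref{LEM MAIN} verifiable: you claim this hypothesis "follows from the already-established orthogonality combined with Lemma~\ref{LEM STRONG BND}," but orthogonality at scale $\Phi_n$ only controls incidence to boundaries within a $30\ell_n\cdot(q_n\cdot A_n)$-ball, which is far smaller than the $8\cdot2^{14\ell_n}\cdot A_n$-ball in the hypothesis (recall $q_n$ is tiny). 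The paper escalates the incidence question to level $k+1$, where $\mathrm{Im}(\phi_k)\subseteq\phi_{k+1}(\rZ_{k+1})$ shrinks the relevant ball to $3\cdot\rZ_{k+1}$, and only then invokes Lemma~\ref{LEM STRONG BND} and orthogonality. Finally, your assertion that a $(\Phi_m,A_m,\epsilon_m)$-rectangular relation is automatically $(\Phi_n,A_n,\epsilon_n)$-sub-rectangular for $n>m$ is backwards: such a relation has classes on the order of $A_m$ (bounded above by $\sim2^{22\ell_m}\cdot A_m$), which cannot contain roughly $A_n$-sized rectangles once $A_n$ dominates; the correct (and non-automatic) fact, noted in the paper's remark after the proof, is that the constructed $E_{1,n}$ are sub-rectangular at every scale $k\le n$ because of how they are built, not by any scale monotonicity.
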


\begin{proof}
Let $G_1 \leq G_2 \leq \cdots$ be an increasing sequence of finitely generated subgroups of $G$ with $G = \bigcup_{k \geq 1} G_k$. Since $G$ is locally nilpotent, each $G_k$ is a finitely generated nilpotent group. Let $\ell_k$ be the Hirsch length of $G_k$. Set $b_k = \ell_{k+1} + 1$. Fix $q_k > 0$ with
$$q_k \ < \ \frac{1}{4} \cdot 306^{-1} \ell_k^{-1} b_k^{-1} 2^{-22 \ell_k^2}.$$
Fix $\epsilon_k > 0$ with $12 \epsilon_k < q_k$.

We now construct, for each $k \geq 1$, a chart $\Phi_k = (\ell_k, \phi_k, \rZ_k, \Gamma_k, \bOne)$ for $G_k$. Note that since $G$ acts freely we have $X^\bOne = X$. For each $k$ fix a finite generating set $U_k$ for $G_k$. We desire our charts to satisfy the following for all $k \geq 1$:
\begin{enumerate}
\item[\rm (a)] $2^{40 \ell_k} \cdot \left(\frac{2}{\epsilon_k} \cdot 2 \cdot 36^2 \cdot 2^{14 \ell_k} \cdot \rZ_k\right) \subseteq \dom(\phi_k)$;
\item[\rm (b)] $\mathrm{Im}(\phi_k) \cup U_{k+1} \subseteq \phi_{k+1}(\rZ_{k+1})$.
\end{enumerate}
We build the charts $\Phi_k$ inductively on $k$. Formally letting $\mathrm{Im}(\phi_0)=\varnothing$ for the base case $k=1$, inductively we obtain $\Phi_k$ by applying Lemma \ref{LEM EXIST} to $G_k$ with
\begin{align*}
 F       \ & = \ \mathrm{Im}(\phi_{k-1}) \cup U_k; \\
 \lambda \ & = \ 2^{40 \ell_k} \cdot \frac{2}{\epsilon_k} \cdot 2 \cdot 36^2 \cdot 2^{14 \ell_k}. \\
\intertext{Now define}
 A_k     \ & = \ \frac{2}{\epsilon_k} \cdot 2 \cdot 36^2 \cdot 2^{14 \ell_k} \cdot \rZ_k.
\end{align*}
Clause (a) and the definitions of $q_k$ and $\epsilon_k$ imply that all inequality and containment requirements of Lemma \ref{LEM MAIN} are satisfied by $\Phi_k$, $A_k$, $b_k$, $q_k$, and $\epsilon_k$ for the action of $G_k$ on $X$. Thus in future applications of Lemma \ref{LEM MAIN} using these parameters, we only need to verify that $b_k$ has the stated bounding property. Furthermore, whenever we apply Lemma \ref{LEM MAIN} to $\Phi_k$ below, we will take it to be implicitly understood that we have in mind additionally the action $G_k\acts X$ and the parameters $A_k$, $b_k$, $q_k$, and $\epsilon_k$ without explicitly listing these each time.

We seek to build a collection of equivalence relations $\{E_{k,n} \: 1 \leq k \leq n\}$. We visualize these equivalence relations as being arranged in a two-dimensional array with the $E_{n,n}$'s placed along the rising diagonal and all other equivalence relations placed below the diagonal, as shown in Figure \ref{FIG1}. Thus for each $1\leq k\leq n$, the equivalence relation $E_{k,n}$ lies in row $k$ and column $n$. We will want the equivalence relations appearing along row $k$ to be $(\Phi_k, A_k, \epsilon_k)$-sub-rectangular equivalence relations which are pairwise $(\Phi_k, q_k \cdot A_k)$-orthogonal (clauses (ii) and (iii) below). The equivalence relations will be constructed column by column from left to right, and within a column by starting on the diagonal and moving vertically downward as indicated by the arrows in Figure \ref{FIG1}. Intuitively, higher rows will correspond to larger scales in our construction. Whenever $k<n$, we will construct $E_{k,n}$ so that at the scale of $\Phi_{k+1}$, $E_{k+1,n}$ and $E_{k,n}$ are essentially indistinguishable from one another (clause (iv) below). Finally, we want the equivalence relations appearing in row $k$ to satisfy, with respect to $b_k$, the bounding property that appears in the statement of Lemma \ref{LEM MAIN}. Specifically we want the array of equivalence relations to satisfy the following:
\begin{enumerate}
\item[\rm (i)] each $E_{k,n}$ is a finite $G$-clopen equivalence relation contained in the $G_n$-orbit equivalence relation;
\item[\rm (ii)] if an equivalence relation $E_{k,n}$ lies in the $k^\text{th}$ row, then it is $(\Phi_k, A_k, \epsilon_k)$-sub-rectangular;
\item[\rm (iii)] if two equivalence relations $E_{k,m}$ and $E_{k,n}$ both lie in row $k$, then they are $(\Phi_k, q_k \cdot A_k)$-orthogonal;
\item[\rm (iv)] if two equivalence relations $E_{k,n}$ and $E_{k+1,n}$ in column $n$ and rows $k$ and $k+1$ are vertically adjacent, then for every $x, y \in X$ with $$\phi_{k+1}(\rZ_{k+1}) \cdot x \ \subseteq \ [x]_{E_{k+1,n}} \ \mbox{ and } \ \phi_{k+1}(\rZ_{k+1}) \cdot y \ \subseteq \ [y]_{E_{k+1,n}}$$ we have $x \ E_{k,n} \ y$ if and only if $x \ E_{k+1,n} \ y$;
\item[\rm (v)] for every $x \in X$, there are at most $b_k$-many integers $t \geq k$ (equivalently, there are at most $b_k$-many equivalence relations in row $k$) such that $\phi_k(8 \cdot 2^{14 \ell_k} \cdot A_k) \cdot x \not\subseteq [x]_{E_{k,t}}$.
\end{enumerate}

\begin{figure}
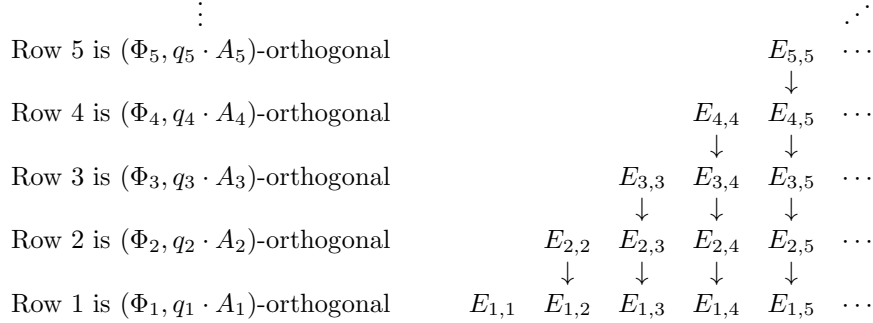

\begin{tabular}{cccccccc}
$\displaystyle{\vdots}$ & $\quad$ & & & & & & $\displaystyle{\Ddots}$\\
Row 5 is $\displaystyle{(\Phi_5, q_5 \cdot A_5)}$-orthogonal & $\quad$ & & & & & $\displaystyle{E_{5,5}}$ & $\displaystyle{\cdots}$\\
 & & & & & & $\displaystyle{\downarrow}$ & \\
Row 4 is $\displaystyle{(\Phi_4, q_4 \cdot A_4)}$-orthogonal & $\quad$ & & & & $\displaystyle{E_{4,4}}$ & $\displaystyle{E_{4,5}}$ & $\displaystyle{\cdots}$\\
 & & & & & $\displaystyle{\downarrow}$ & $\displaystyle{\downarrow}$ & \\
Row 3 is $\displaystyle{(\Phi_3, q_3 \cdot A_3)}$-orthogonal & $\quad$ & & & $\displaystyle{E_{3,3}}$ & $\displaystyle{E_{3,4}}$ & $\displaystyle{E_{3,5}}$ & $\displaystyle{\cdots}$\\
 & & & & $\displaystyle{\downarrow}$ & $\displaystyle{\downarrow}$ & $\displaystyle{\downarrow}$ & \\
Row 2 is $\displaystyle{(\Phi_2, q_2 \cdot A_2)}$-orthogonal & $\quad$ & & $\displaystyle{E_{2,2}}$ & $\displaystyle{E_{2,3}}$ & $\displaystyle{E_{2,4}}$ & $\displaystyle{E_{2,5}}$ & $\displaystyle{\cdots}$\\
 & & & $\displaystyle{\downarrow}$ & $\displaystyle{\downarrow}$ & $\displaystyle{\downarrow}$ & $\displaystyle{\downarrow}$ & \\
Row 1 is $\displaystyle{(\Phi_1, q_1 \cdot A_1)}$-orthogonal & $\quad$ & $\displaystyle{E_{1,1}}$ & $\displaystyle{E_{1,2}}$ & $\displaystyle{E_{1,3}}$ & $\displaystyle{E_{1,4}}$ & $\displaystyle{E_{1,5}}$ & $\displaystyle{\cdots}$
\end{tabular}
\caption{The first five columns of the two-dimensional array of equivalence relations. Arrows indicate the order of construction. \label{FIG1}}
\end{figure}

We construct the two-dimensional array of equivalence relations inductively, one column at a time, from left to right. To begin, apply Lemma \ref{LEM MAIN} to $\Phi_1$ with $s = 0$ (no previous sequence of equivalence relations) to obtain a $G_1$-clopen $(\Phi_1, A_1, \epsilon_1)$-rectangular equivalence relation $E_{1,1}$. This completes the first column. Trivially (i) -- (v) hold, where for $G$-clopenness in (i) we use Lemma \ref{LEM GHCLOPEN}. Now suppose that all equivalence relations in columns $1$ through $n-1$ have been constructed and satisfy clauses (i) through (v). Before constructing the equivalence relations in column $n$, we first construct a column of auxiliary equivalence relations $\{F_k \: 1 \leq k \leq n\}$. We require the following:
\begin{enumerate}
\item[\rm (i$'$)] if $F_k$ lies in row $k$ then it is a finite $G$-clopen equivalence relation contained in the $G_k$-orbit equivalence relation;
\item[\rm (ii$'$)] if $F_k$ lies in row $k$ then it is $(\Phi_k, A_k, \epsilon_k)$-rectangular;
\item[\rm (iii$'$)] if $F_k$ lies in row $k$ then it is $(\Phi_k, q_k \cdot A_k)$-orthogonal to each of the pre-existing equivalence relations in row $k$, namely $E_{k,k}, E_{k,k+1}, \ldots, E_{k,n-1}$.
\end{enumerate}
For each $k$ we apply Lemma \ref{LEM MAIN} to $\Phi_k$ with respect to the pre-existing equivalence relations $E_{k,k}, E_{k,k+1}, \ldots, E_{k,n-1}$ in row $k$ in order to obtain a $G_k$-clopen $(\Phi_k, A_k, \epsilon_k)$-rectangular equivalence relation $F_k$ which is $(\Phi_k, q_k \cdot A_k)$-orthogonal to each of $E_{k,k}, \ldots, E_{k,n-1}$. We remark that in applying Lemma \ref{LEM MAIN}, the bounding property of $b_k$ holds by the inductive assumption (v), and the other requirements of the lemma were verified immediately after the listing of clauses (a) and (b) above. As $F_k$ is $G_k$-clopen and contained in the $G_k$-orbit equivalence relation, it is $G$-clopen by Lemma \ref{LEM GHCLOPEN}. This defines the $F_k$'s.

For each $1 \leq k < n$ let $\sigma_k : X \rightarrow X$ be a $G$-clopen and continuous selector for $F_k$ as given by Lemma \ref{LEM SELECT}. Define $E_{n,n} = F_n$, and in general once $E_{k+1,n}$ has been defined let $E_{k,n}$ be the $F_k$-approximation to $E_{k+1,n}$ induced by $\sigma_k$. Specifically, $E_{k,n}$ is defined by the rule
$$x \ E_{k,n} \ y \quad \Longleftrightarrow \quad \sigma_k(x) \ E_{k+1,n} \ \sigma_k(y).$$
Clearly $E_{k,n}$ contains $F_k$ as a sub-equivalence relation, and thus $E_{k,n}$ is $(\Phi_k, A_k, \epsilon_k)$-sub-rectangular as required by clause (ii). As $F_k \subseteq E_{k,n}$ we immediately obtain $\partial_i^{\Phi_k}(E_{k,n}, q_k \cdot A_k) \subseteq \partial_i^{\Phi_k}(F_k, q_k \cdot A_k)$ for every $1 \leq i \leq \ell_k$, and thus clause (iii) is satisfied. We verify clauses (iv), (v), and (i) below.

(iv). Let $x, y \in X$ be such that $\phi_{k+1}(\rZ_{k+1}) \cdot x \subseteq [x]_{E_{k+1,n}}$ and $\phi_{k+1}(\rZ_{k+1}) \cdot y \subseteq [y]_{E_{k+1,n}}$. Since $F_k$ is $\Phi_k$-rectangular (as opposed to just sub-rectangular), we have $\sigma_k(x) \in \mathrm{Im}(\phi_k) \cdot x \subseteq \phi_{k+1}(\rZ_{k+1}) \cdot x$ and similarly $\sigma_k(y) \in \phi_{k+1}(\rZ_{k+1}) \cdot y$. So our assumption on $x$ and $y$ gives that $x \ E_{k+1,n} \ \sigma_k(x)$ and $y \ E_{k+1,n} \ \sigma_k(y)$. It follows that $x \ E_{k+1,n} \ y$ if and only if $\sigma_k(x) \ E_{k+1,n} \ \sigma_k(y)$ if and only if $x \ E_{k,n} \ y$.

(v). Fix $x \in X$ and let $J$ be the set of integers $t \in \{k + 1, \ldots, n\}$ with $\phi_k(8 \cdot 2^{14 \ell_k} \cdot A_k) \cdot x \not\subseteq [x]_{E_{k,t}}$. Note that we do not allow $k \in J$ and therefore we need only show that $|J| \leq b_k - 1 = \ell_{k+1}$. For any such $t \in J$ there is $y \in \phi_k(8 \cdot 2^{14 \ell_k} \cdot A_k) \cdot x$ which is $E_{k,t}$-inequivalent to $x$. Since $t > k$, there is an equivalence relation $E_{k+1,t}$ sitting above $E_{k,t}$ in the array. Clause (iv) implies that either $\neg\, x \ E_{k+1,t} \ y$, $\phi_{k+1}(\rZ_{k+1}) \cdot x \not\subseteq [x]_{E_{k+1,t}}$ or $\phi_{k+1}(\rZ_{k+1}) \cdot y \not\subseteq [y]_{E_{k+1,t}}$. In any case we have $\phi_{k+1}(3 \cdot \rZ_{k+1}) \cdot x \not\subseteq [x]_{E_{k+1,t}}$ since $\mathrm{Im}(\phi_k) \subseteq \phi_{k+1}(\rZ_{k+1})$. Now Lemma \ref{LEM STRONG BND} implies that for every $t \in J$ there is $1 \leq i \leq \ell_{k+1}$ with
$$x \ \in \ \phi_{k+1}(30 \ell_{k+1} \cdot (q_{k+1} \cdot A_{k+1})) \cdot \partial_i^{\Phi_{k+1}}(E_{k+1,t}, q_{k+1} \cdot A_{k+1}).$$
Since the equivalence relations in row $k+1$ are $(\Phi_{k+1}, q_{k+1} \cdot A_{k+1})$-orthogonal by clause (iii) and $x$ is fixed, we must have $|J| \leq \ell_{k+1}$.

(i). The equivalence relation $E_{n,n} = F_n$ is finite and contained in the $G_n$-orbit equivalence relation. Since each $\sigma_k$ is finite-to-one and moves points within their $G_n$-orbits, it follows that each $E_{k,n}$ is finite and contained in the $G_n$-orbit equivalence relation. It only remains to check the $G$-clopen property. We have that $E_{n,n} = F_n$ is $G$-clopen by clause (i$'$). Now inductively assume that $E_{k+1,n}$ is $G$-clopen. Fix $g\in G$. For each $x\in X$ there exist unique $u,v\in \mathrm{Im}(\phi_k)$ such that $\sigma_k(x)=u\cdot x$ and $\sigma_k(g\cdot x)=vg\cdot x$. Fixing $u,v\in \mathrm{Im}(\phi_k)$, let $X_{u,v}$ consist of those $x\in X$ for which $\sigma_k(x)=u\cdot x$ and $\sigma_k(g\cdot x)=vg\cdot x$, and note that $X_{u,v}$ is clopen since $\sigma_k$ is $G$-clopen. Set
$$W_{u,v}=\{x\in X_{u,v}\:x \ E_{k+1,n} \ vgu^{-1}\cdot x\}.$$
Then $W_{u,v}$ is clopen by our inductive assumption, and for $x\in X_{u,v}$ we have
$$x \ E_{k,n} \ g \cdot x \ \Longleftrightarrow \ u \cdot x \ E_{k+1,n} \ v g \cdot x \ \Longleftrightarrow \ u \cdot x \in W_{u,v} \ \Longleftrightarrow \ x \in u^{-1} \cdot W_{u,v}.$$
Hence the set of $x\in X_{u,v}$ such that $x \ E_{k,n} \ g\cdot x$ is clopen. As this holds for every pair $u,v$ of group elements ranging over the finite set $\mathrm{Im}(\phi_k)$, and since each $X_{u, v}$ is itself clopen, it follows that the set of $x\in X$ for which $x \ E_{k,n} \ g\cdot x$ is clopen, and therefore $E_{k,n}$ is $G$-clopen, establishing (i). This completes the construction of the equivalence relations $E_{k,n}$, $1 \leq k \leq n$.

We note that our equivalence relations have a useful sixth property which is an extension of clause (iv):
\begin{enumerate}
\item[\rm (vi)] if two equivalence relations $E_{k,n}$ and $E_{t,n}$ both lie in column $n$ with $t \leq k$, then for every $x, y \in X$ with $\phi_k(3 \cdot \rZ_k) \cdot x \subseteq [x]_{E_{k,n}}$ and $\phi_k(3 \cdot \rZ_k) \cdot y \subseteq [y]_{E_{k,n}}$ we have $x \ E_{k,n} \ y$ if and only if $x \ E_{t,n} \ y$.
\end{enumerate}
To see that this is true, first notice that $x \ E_{k-1,n} \ y$ if and only if $x \ E_{k,n} \ y$ by clause (iv). Next, let $x,y$ satisfy the hypotheses and consider $x' \in \phi_{k-1}(3 \cdot \rZ_{k-1}) \cdot x$. Note that
$$\phi_k(\rZ_k) \phi_{k-1}(3 \cdot \rZ_{k-1}) \ \subseteq \ \phi_k(\rZ_k) \phi_k(\rZ_k) \ \subseteq \ \phi_k(3 \cdot \rZ_k),$$
so we have $\phi_k(\rZ_k) \cdot x' \subseteq \phi_k(3 \cdot \rZ_k) \cdot x \subseteq [x]_{E_{k,n}}$. Thus clause (iv), applied to $x$ and $x'$, immediately implies that $x' \ E_{k-1,n} \ x$. Since $x'$ was chosen arbitrarily, we have $\phi_{k-1}(3 \cdot \rZ_{k-1}) \cdot x \subseteq [x]_{E_{k-1,n}}$. Similarly, $\phi_{k-1}(3 \cdot \rZ_{k-1}) \cdot y \subseteq [y]_{E_{k-1,n}}$. Thus the assumption of clause (vi) is renewed as one travels vertically down the column, and clause (vi) now follows by induction.

We now consider the bottom row of equivalence relations in the array and claim that for $x, y \in X$,
$$x\mathrel{E^X_G}y \quad \Longleftrightarrow \quad (\exists m \in \N) \ (\forall n \geq m) \quad x \ E_{1,n} \ y.$$
Clearly each $E_{1,n}$ is contained in $E^X_G$ and thus the right-hand side implies the left. So suppose $x\mathrel{E^X_G}y$. Since $G = \bigcup_k G_k$, there is $k$ with $G_k \cdot x = G_k \cdot y$. Clauses (ii) and (iii) and Lemma \ref{LEM ORTHOSEQ} imply that $x \ E_{k,n} \ y$ for all but finitely many $n \geq k$. In other words, along the $k^\text{th}$ row $x$ and $y$ are inequivalent only finitely many times. Applying Lemma \ref{LEM ORTHOSEQ} again repeatedly to each pair $x,x'$ for $x'\in\phi_k(3\cdot\rZ_k)\cdot x$, and $y,y'$ for $y'\in\phi_k(3\cdot\rZ_k)\cdot y$, we have that $\phi_k(3 \cdot \rZ_k) \cdot x \subseteq [x]_{E_{k,n}}$ and $\phi_k(3 \cdot \rZ_k) \cdot y \subseteq [y]_{E_{k,n}}$ for all but finitely many $n$. So clause (vi) implies that $x \ E_{1,n} \ y$ for all but finitely many $n$. We conclude that the orbit equivalence relation $E^X_G$ induced by $G$ is hyperfinite. Furthermore, it follows from Lemma \ref{LEM E0} (using $F_n = E_{1,n}$) that $E^X_G$ continuously embeds into $E_0$. \end{proof}

\begin{rem}
An alternative approach at the end of the proof of Theorem \ref{THM FREE} would be to show that for $n \geq k$, the equivalence relations $E_{1,n}$ along the bottom row are $(\Phi_k, A_k, 2 \epsilon_k)$-sub-rectangular and pairwise $(\Phi_k, q_k \cdot A_k)$-orthogonal. Then one could apply Lemma \ref{LEM ORTHOSEQ} directly without clause (vi). In fact for $n\geq k$ the equivalence relations $E_{1,n}$ \emph{are} $(\Phi_k, A_k, 2 \epsilon_k)$-sub-rectangular. Morally speaking they should also be pairwise $(\Phi_k, q_k \cdot A_k)$-orthogonal, but unfortunately they are not due to minor technical shortcomings in our definition of ``boundary." These shortcomings could perhaps be overcome with some effort, and in any case we believe that this alternative approach is a good viewpoint to have.
\end{rem}

We now prove the main theorem in the general situation where the action is not necessarily free. In this case the proof has three substantial differences from the free action case.
\begin{list}{}{\leftmargin=6mm}
\item[(1)] As before, we will express $G$ as the increasing union of finitely generated subgroups $G_k$. Previously when the action was free we were able to use a single chart $\Phi_k$ to describe the action of $G_k$ on $X$. However, since we only know how to construct charts where $\cH$ is finite, in general the set $X^\cH$ will be a proper subset of $X$ and need not even be $G_k$-invariant. So in order to describe the action of $G_k$ on $X$ we must use a countably infinite family of charts. The charts for $G_k$ will be grouped into finite families denoted $\Phi_{k,n}$, for $n \geq k$. We visualize these finite families of charts as sitting in a two-dimensional triangular array, just as we viewed the equivalence relations $E_{k,n}$ in the proof of Theorem \ref{THM FREE}.
\item[(2)] In the free action case we built a two-dimensional array of equivalence relations in order to diagonalize over the finitely generated subgroups $G_k$. In the general case, we must diagonalize not only over the $G_k$'s but also over subgroups of the $G_k$'s that arise as stabilizers. As a result, we must now build a three-dimensional array of equivalence relations.
\item[(3)] In the free action case, the $\Phi_k$'s had a nice linear order in the sense that the scale of $\Phi_k$ was tiny compared to that of $\Phi_{k+1}$. This fact allowed for clauses (iv) and (vi) in the proof of \ref{THM FREE}. In the general case we are not able to put any such linear order on the $\Phi_{k,n}$'s. So clauses (iv) and (vi) disappear in the general case and are replaced by a single new clause, which is again given number (iv). This new clause plays a similar role in proving the theorem, but this time it is far more delicate and technical.
\end{list}
Aside from these three differences, the details of the free action proof and the general case proof are quite similar. In fact, in order to help the reader transition between the two cases, we intentionally structured the proofs to emphasize their similarities.

In the general case we are unable to obtain a continuous reduction to $E_0$ and we therefore drop all topological constraints such as requiring that certain sets be clopen or requiring the action to be continuous.

\begin{thm} \label{THM NFREE}
If the countable locally nilpotent group $G$ acts in a Borel fashion on the standard Borel space $X$, then the induced orbit equivalence relation $E^X_G$ is hyperfinite.
\end{thm}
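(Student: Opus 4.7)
The plan is to follow the same strategy as in Theorem \ref{THM FREE}, but with two significant modifications dictated by the presence of non-trivial stabilizers. First, write $G = \bigcup_k G_k$ as an increasing union of finitely generated nilpotent subgroups and fix parameters $\ell_k$ (Hirsch length of $G_k$), $b_k$, $q_k$, $\epsilon_k$ and finite generating sets $U_k$ of $G_k$, all satisfying the containment and smallness requirements of Lemma \ref{LEM MAIN}. The first modification is that, because Lemma \ref{LEM NFEXIST} can only produce a chart whose $\cH$ is a finite family of conjugate subgroups, and because the conjugacy relation on $\Sub(G_k)$ may be non-smooth, we cannot describe the action of $G_k$ on $X$ by a single chart. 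Instead, for each pair $(k,n)$ with $n \geq k$ we will build a finite family of charts $\Phi_{k,n,j} = (\ell_{k,n,j}, \phi_{k,n,j}, \rZ_{k,n,j}, \Gamma_{k,n,j}, \cH_{k,n,j})$ for $G_k$, using Lemma \ref{LEM NFEXIST}, chosen so that the sets $X^{\cH_{k,n,j}}$ exhaust the $G_k$-stabilizer types that appear on points relevant to column $n$ and so that the analog of clause (b) from the proof of Theorem \ref{THM FREE} holds: images of previously built charts together with $U_{k+1}$ lie inside $\phi_{k+1,n',j'}(\rZ_{k+1,n',j'})$ for appropriate downstream charts, absorbing the relevant stabilizers as indicated in Section \ref{SEC STAB}.

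The second modification is that, following the author's blueprint, we build a \emph{three}-dimensional triangular array of equivalence relations $E_{k,m,n}$ indexed by triples $k \leq m \leq n$, with $m$ encoding the stabilizer-class diagonalization and $n$ the $G_k$-scale diagonalization. The construction proceeds by induction on $n$, then $m$, then downward on $k$, exactly parallel to the two-dimensional induction in Theorem \ref{THM FREE}. For each new $(k,m,n)$ we first build an auxiliary $(\Phi_{k,m,n,j}, A_{k,m,n,j}, \epsilon_k)$-rectangular equivalence relation $F_{k,m,n}$ via Lemma \ref{LEM MAIN} applied to the collection of previously constructed equivalence relations sharing the same $(k,m)$ (the ``row''), so that $F_{k,m,n}$ is $(\Phi_{k,m,n,j}, q_k \cdot A_{k,m,n,j})$-orthogonal to all of them, and then we define $E_{k,m,n}$ by pulling back $E_{k+1,m,n}$ along a Borel selector for $F_{k,m,n}$, as was done with $\sigma_k$ in the free case. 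The analogs of clauses (i), (ii), (iii), and (v) from the proof of Theorem \ref{THM FREE} carry over with minor bookkeeping; in particular the bound $b_k$ is preserved inductively by Lemma \ref{LEM STRONG BND} together with the orthogonality in the row, applied only at points lying in the appropriate $X^\cH$.

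The hard part will be formulating and verifying the replacement of clauses (iv) and (vi) from the proof of Theorem \ref{THM FREE}. In the free-action case the scales $\Phi_k$ were linearly ordered and grew so rapidly that small perturbations at scale $k$ were invisible at scale $k+1$, which made vertical compatibility within a column essentially automatic. In the present setting the indices $(k,m)$ carry no such linear order: perturbations made at one $(k,m)$ can be detected at others, and moreover the chart $\Phi_{k,m,n,j}$ only applies on the set $X^{\cH_{k,m,n,j}}$, so a single point in an orbit may be ``served'' by different charts at different stages. The substitute clause must ensure that if $x,y$ lie in the same $G$-orbit, then for all sufficiently large $n$ and for every $m$ in an appropriate cofinal set we can locate a chart of the family that views both $x$ and $y$ as $\cH$-points with the stabilizer absorbed, and that in this chart the orthogonality forces $x \mathrel{E_{k,m,n}} y$. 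The verification proceeds by exhibiting, for each pair $(x,y)$ with $x,y \in G_k \cdot x$, a cofinite set of pairs $(m,n)$ for which the relevant chart exists and $\phi(3 \cdot \rZ) \cdot x \subseteq [x]_{E_{k,m,n}}$, and then invoking Lemma \ref{LEM STRONG BND} exactly as in Lemma \ref{LEM ORTHOSEQ} to conclude that $x$ and $y$ are separated by at most $\ell_k$ of the equivalence relations in the row.

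With that compatibility clause in hand, one extracts a diagonal sequence $F_n := E_{1, m(n), n}$ through the bottom face of the array (with $m(n) \to \infty$ suitably) and shows, as in Theorem \ref{THM FREE}, that $x \mathrel{E^X_G} y$ if and only if $x \mathrel{F_n} y$ for all sufficiently large $n$. Each $F_n$ is a finite Borel equivalence relation, so $E^X_G$ is hyperfinite. A continuous reduction to $E_0$ is not obtained in this case because of the measurable (but in general not continuous) selection of the stabilizer-handling chart $\Phi_{k,m,n,j}$ from the family, which reflects the essential obstruction coming from the non-smoothness of the subgroup conjugacy relation highlighted in Proposition \ref{THM NONSMOOTH}.
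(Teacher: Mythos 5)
Your high-level strategy matches the paper's: express $G$ as an increasing union of finitely generated nilpotent $G_k$, replace each single chart $\Phi_k$ by a finite family of charts (one per finitely many stabilizer-conjugacy types handled at a given stage), and move from a two-dimensional to a three-dimensional triangular array of equivalence relations constructed via Lemma~\ref{LEM MAIN} and Borel selectors. You also correctly diagnose the central difficulty: the loss of the linear ordering on scales, which in the free case made the vertical compatibility clauses (iv) and (vi) essentially automatic.

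However, the proposal leaves precisely this hard part as an unfilled placeholder. Saying that ``the substitute clause must ensure that \ldots we can locate a chart of the family that views both $x$ and $y$ as $\cH$-points'' describes a desideratum, not a clause, and the paragraph that follows sketches the shape of the verification without the machinery that makes it possible. The paper's actual clause (iv) is a precise implication built around two devices your proposal never introduces. First, for each point $x$ and each stage $n$ it defines $r_n(x)$ to be the largest $k\leq n$ for which some chart $\Phi^H_{k,n}$ ``fully sees'' $x$, i.e.\ $\phi^H_{k,n}(15\ell q\cdot A)\cdot x\subseteq X^H_{k,n}$; this replaces the missing linear order, since it identifies the deepest scale at which $x$ can currently be handled. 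Second, it builds increasing finite sets $V_{k,n}\subseteq G_k$ exhausting $G_k$, defined as bounded products of generators and images of all earlier charts; these play the role of the balls $\phi_{k+1}(\rZ_{k+1})$ from the free proof and let one control the displacement incurred by the chain of selectors. The compatibility clause then reads: if $x\in X^H_{k,n}$, $r_n(x)=k$, and $\phi^H_{k,n}(\rZ^H_{k,n})\cdot x\subseteq[x]_{E^m_{k,n}}$, then $V_{k,n}\cdot x\subseteq[x]_{E^m_{s,t}}$ for \emph{all} $1\leq s\leq t<n$. Its verification requires a careful tracking of the sequence of selector applications across the whole lower triangle, showing that no ``event'' (a nontrivial move by a selector) occurs above row $r_n(x)$. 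Without $r_n$ and $V_{k,n}$ there is no candidate statement to verify, and the claim that clause (v) ``carries over with minor bookkeeping'' is also optimistic: in the paper the bounding property (v) is deduced from the new (iv) together with $r_{n+1}(x)$, so it cannot be verified independently.

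A secondary concern is the final extraction. You propose $F_n := E_{1,m(n),n}$ with $m(n)\to\infty$, i.e.\ letting both the column and the layer index grow. In the paper's argument, clause (iv) pushes information from arbitrarily large columns all the way down to column $1$, so the final sequence is simply $E^m_{1,1}$ with only the layer varying; the orthogonality in a fixed row and column gives ``$y\in[x]_{E^m_{1,1}}$ for all but finitely many $m$,'' which is exactly what Lemma~\ref{LEM E0}'s pattern needs. Varying the column as well forces you to guarantee a containment at a \emph{specific} layer rather than ``all but finitely many,'' which the orthogonality argument does not directly give, so this choice would need its own justification.
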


\begin{proof}
Let $G_1 \leq G_2 \leq \cdots$ be an increasing sequence of finitely generated subgroups of $G$ which exhaust $G$. Since $G$ is locally nilpotent, each $G_k$ is a finitely generated nilpotent group. Let $\ell_k$ be the Hirsch length of $G_k$. For $n \geq k$ set
$$b_{k,n} = 1 + \max_{k \leq t \leq n} \ell_t.$$
Fix any $q_{k,n} > 0$ with
$$q_{k,n} \ < \ \frac{1}{4} \cdot 306^{-1} \ell_k^{-1} b_{k,n}^{-1} 2^{-22 \ell_k^2},$$
and choose $\epsilon_{k,n} > 0$ so that $12 \epsilon_{k,n} < q_{k,n}$.

We will use a countably infinite collection of charts for $G_k$ to describe the action of $G_k$ on $X$. These charts will be grouped into finite families denoted $\Phi_{k,n}$, $n \geq k$, which for fixed $k$ we visualize as sitting in an infinite horizontal row extending to the right. When both $n$ and $k$ vary we imagine the $\Phi_{k,n}$'s as sitting in a two-dimensional array as in Figure \ref{FIG3}. Since every subgroup of $G_k$ is finitely generated (\cite[5.2.17]{R}), the space $\Sub(G_k)$ of subgroups of $G_k$ is countable. Therefore we can pick a transversal $T_k \subseteq \Sub(G_k)$ for the action of $G_k$ on $\Sub(G_k)$ by conjugation, and this transversal will automatically be Borel. For $H \in T_k$ define
$$X_k^H \ := \ \{x \in X \: \Stab(x) \cap G_k \text{ is } G_k \text{-conjugate to } H\}.$$
Note that $X_k^H$ is a $G_k$-invariant Borel set and if $H \neq L \in T_k$ then $X_k^H$ and $X_k^L$ are disjoint. For each $k$ fix an increasing sequence
$$S_{k,k} \subseteq S_{k, k+1} \subseteq \cdots \subseteq S_{k,n} \subseteq \cdots$$
of finite subsets of $T_k$ which exhausts $T_k$. The finite family $\Phi_{k,n}$ will consist of charts $\Phi_{k,n}^H$ indexed by $H \in S_{k,n}$. We will have $$\Phi_{k,n}^H = (\ell_{k,n}^H, \phi_{k,n}^H, \rZ_{k,n}^H, \Gamma_{k,n}^H, \cH_{k,n}^H),$$
where $\cH_{k,n}^H$ consists of finitely many $G_k$-conjugates of $H$. For each $1\leq k\leq n$ and $H\in S_{k,n}$, we will write
$$X_{k,n}^H \ := \ \{x \in X \: \Stab(x) \cap G_k \in \cH_{k,n}^H\} \ \subseteq \ X_k^H.$$
Thus for fixed $1\leq k\leq n$, the finitely many sets $X_{k,n}^H$, $H \in S_{k,n}$, are contained in pairwise disjoint $G_k$-invariant Borel sets. Notice that in our notation from prior sections, $X_{k,n}^H$ is just the set $X^{\cH_{k,n}^H}$, where $X^{\cH_{k,n}^H}$ is defined using the $G_k$-action.

\begin{figure}
\setlength{\unitlength}{4mm}
\centering
\begin{picture}(27,14)
\put(0,0.5){\makebox{Families of charts for $G_1$}}
\put(0,4){\makebox{Families of charts for $G_2$}}
\put(0,7.5){\makebox{Families of charts for $G_3$}}
\put(0,11){\makebox{Families of charts for $G_4$}}
\put(5,12.75){\makebox{$\displaystyle{\vdots}$}}
%%%%%%%%%%%%%%%%%%%%%%%%%%%%%%%%%%%%%%%%%%%%%%%%
\put(12,0.5){\makebox{$\displaystyle{\Phi_{1,1}}$}}
\put(16,0.5){\makebox{$\displaystyle{\Phi_{1,2}}$}}
		\put(13.7,0.75){\vector(1,0){2}}
\put(20,0.5){\makebox{$\displaystyle{\Phi_{1,3}}$}}
		\put(17.1,3.7){\vector(1,-1){2.7}}
\put(24,0.5){\makebox{$\displaystyle{\Phi_{1,4}}$}} % last column, good
		\put(20.9,7.2){\vector(1,-2){3.05}} % diag from last column, must change
\put(16,4){\makebox{$\displaystyle{\Phi_{2,2}}$}}
		\put(16.5,1.35){\vector(0,1){2.3}}
\put(20,4){\makebox{$\displaystyle{\Phi_{2,3}}$}}
		\put(20.5,1.35){\vector(0,1){2.3}}
\put(24,4){\makebox{$\displaystyle{\Phi_{2,4}}$}} % last column, good
		\put(24.5,1.35){\vector(0,1){2.3}} % good
\put(20,7.5){\makebox{$\displaystyle{\Phi_{3,3}}$}}
		\put(20.5,4.85){\vector(0,1){2.3}}
\put(24,7.5){\makebox{$\displaystyle{\Phi_{3,4}}$}} % last column, good
		\put(24.5,4.85){\vector(0,1){2.3}} % good
\put(24,11){\makebox{$\displaystyle{\Phi_{4,4}}$}} % last column, good
		\put(24.5,8.35){\vector(0,1){2.3}} % good
\put(26,0.5){\makebox{$\cdots$}}
\put(26,4){\makebox{$\cdots$}}
\put(26,7.5){\makebox{$\cdots$}}
\put(26,11){\makebox{$\cdots$}}
\put(26,12.75){\makebox{$\Ddots$}}
\end{picture}
%\captionsetup{belowskip=-10pt,aboveskip=-10pt}
\caption{The order of construction of the finite families of charts $\Phi_{k,n}$, $k\leq n$. \label{FIG3}}
\end{figure}
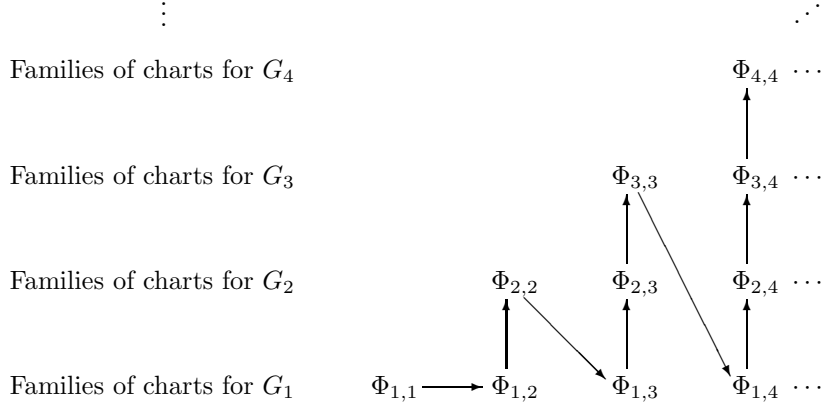

We now construct the two-dimensional array of finite families of charts $\Phi_{k,n} = \{\Phi_{k,n}^H \: H \in S_{k,n}\}$ for $G_k$, $k \leq n$. We will simultaneously construct increasing finite sets $V_{k,n} \subseteq G_k$, $n \geq k$, which will play an important role in the new version of clause (iv). For each $k$ fix a finite generating set $U_k$ for $G_k$. Without loss of generality, we may assume that the $U_k$'s are increasing. We will want the families of charts $\Phi_{k,n}$ and subsets $V_{k,n}\subseteq G_k$ to possess the following properties for all $1 \leq k \leq n$ and $H \in S_{k,n}$:
\begin{enumerate}
\item[\rm (a)] $2^{40 \ell_k} \cdot \left(\frac{2}{\epsilon_{k,n}} \cdot 2 \cdot 36^2 \cdot 2^{14 \ell_k} \cdot \rZ_{k,n}^H\right) \subseteq \dom(\phi_{k,n}^H)$;
\item[\rm (b)] $V_{k,n} V_{k,n} \cdot L \subseteq \phi_{k,n}^H(\rZ_{k,n}^H) \cdot L$ for all $L \in \cH_{k,n}^H$;
\item[\rm (c)] $\ell_{k,n}^H \leq \ell_k$;
\item[\rm (d)] the elements of $V_{k,n}$ are the $n^2$-fold products $g_1 g_2 \cdots g_{n^2}$ where for every $1 \leq i \leq n^2$, either $g_i$ or $g_i^{-1}$ is contained in $U_k$ or is contained in $\mathrm{Im}(\phi_{s,t}^L)$ for some $1 \leq s \leq t$ and $L \in S_{s,t}$ satisfying $s \leq k$, $t \leq n$, and $(s, t) \neq (k,n)$;
\item[\rm (e)] if $n > k$ and $H \in S_{k,n-1}$ then for any
$$g \ \in \ \phi_{k,n}^H \left( 16 \ell_k q_{k,n} \cdot \frac{2}{\epsilon_{k,n}} \cdot 2 \cdot 36^2 \cdot 2^{14 \ell_k} \cdot \rZ_{k,n}^H \right)$$
we have that $g L g^{-1} \in \cH_{k,n}^H$ for every $L \in \cH_{k,n-1}^H$.
\end{enumerate}
Once the array of $\Phi_{k,n}$'s is constructed we will define
$$A_{k,n}^H \ := \ \frac{2}{\epsilon_{k,n}} \cdot 2 \cdot 36^2 \cdot 2^{14 \ell_k} \cdot \rZ_{k,n}^H$$
for $H \in S_{k,n}$. Then from clauses (a) and (c) and the definitions of $q_{k,n}$ and $\epsilon_{k,n}$ we will have that all inequality and containment requirements of Lemma \ref{LEM MAIN} are satisfied by $\Phi_{k,n}^H$, $A_{k,n}^H$, $b_{k,n}$, $q_{k,n}$, and $\epsilon_{k,n}$ for the action of $G_k$ on $X$. Thus in future applications of Lemma \ref{LEM MAIN} using these parameters, we need only verify that $b_{k,n}$ has the stated bounding property. As before, whenever we apply Lemma \ref{LEM MAIN} to $\Phi_{k,n}^H$ below, we will have in mind the action $G_k\acts X$ and the parameters $A_{k,n}^H$, $b_{k,n}$, $q_{k,n}$, and $\epsilon_{k,n}$ without stating this explicitly.

We build the finite families of charts $\Phi_{k,n}$ inductively in the order indicated by the arrows in Figure \ref{FIG3}: namely, $\Phi_{k+1,n}$ is constructed immediately after $\Phi_{k,n}$ if $k < n$, and $\Phi_{1,n+1}$ is constructed immediately after $\Phi_{n,n}$. The base case is $\Phi_{1,1}$. Observe that if the families of charts and the $V_{i,j}$'s are constructed in this order, then the conditions imposed on $\Phi_{k,n}$ by clauses (a) through (e) refer only to families that have already been constructed. Furthermore, $V_{k,n}$ can be defined by clause (d). Note that $V_{k,n}$ is finite since the sets $S_{s,t}$, $U_k$, and $\mathrm{Im}(\phi_{s,t}^L)$ are all finite. Therefore the chart $\Phi_{k,n}^H$ can be constructed immediately by applying Lemma \ref{LEM NFEXIST} to $G_k$ with
$$\begin{array}{rcl}
 S       & = & \left\{\begin{array}{ll} \cH_{k,n-1}^H & \mbox{if } n > k \mbox{ and } H \in S_{k,n-1} \\ \{H\} & \mbox{otherwise;} \end{array}\right. \vspace{2mm} \\
 F       & = & V_{k,n} V_{k,n}; \vspace{3mm} \\
 \lambda & = & 2^{40 \ell_k} \cdot \frac{2}{\epsilon_{k,n}} \cdot 2 \cdot 36^2 \cdot 2^{14 \ell_k}; \vspace{2mm} \\
 \eta    & = & 16 \ell_k q_{k,n} \cdot \frac{2}{\epsilon_{k,n}} \cdot 2 \cdot 36^2 \cdot 2^{14 \ell_k}.
\end{array}$$

This completes the construction of the finite families $\Phi_{k,n}$ and the sets $V_{k,n}$. Notice that if $H \in S_{k,n-1}$ then there is almost no relationship between $\Phi_{k,n-1}^H$ and $\Phi_{k,n}^H$ beyond clause (e). (One could scrutinize the proof of Lemma \ref{LEM NFEXIST} to see that $\ell_{k,n-1}^H = \ell_{k,n}^H$, but we do not need this fact). In particular $\Phi_{k,n-1}^H \neq \Phi_{k,n}^H$, so the families $\Phi_{k,n}$ are not increasing despite the fact that the $S_{k,n}$'s are increasing.

We now make a few comments on the properties of the constructed charts. Recall that we defined
$$X_{k,n}^H \ = \ \{x \in X \: \Stab(x) \cap G_k \in \cH_{k,n}^H\} \ \subseteq \ X_k^H.$$
Also recall that $\Phi_{k,n}^H$ and $X_{k,n}^H$ are only defined if $H\in S_{k,n}$. We first point out that clauses (b) and (e) can be restated as follows:
\begin{enumerate}
\item[\rm (b$'$)] $V_{k,n} V_{k,n} \cdot x \subseteq \phi_{k,n}^H(\rZ_{k,n}^H) \cdot x$ for every $x \in X_{k,n}^H$;
\item[\rm (e$'$)] if $n > k$ and $H \in S_{k,n-1}$, then
$$\phi_{k,n}^H \left( 16 \ell_k q_{k,n} \cdot A_{k,n}^H\right) \cdot X_{k,n-1}^H \ \subseteq \ X_{k,n}^H.$$
\end{enumerate}
Observe that since $U_k$ generates $G_k$, clause (d) implies that the sets $V_{k,n}$ are increasing and $G_k = \bigcup_{n \geq k} V_{k,n}$. Also, clauses (b$'$) and (e$'$) imply that for any $H \in S_{k,n}$,
$$(\{1_G\} \cup U_k \cup U_k^{-1}) \cdot X_{k,n}^H \ \subseteq \ X_{k,n+1}^H$$
(note that indeed $H \in S_{k,n+1}$ since the $S_{k,i}$'s are increasing). From the definitions we see that for $H \in S_{k,n}$ the set $X_{k,n}^H$ meets every $G_k$-orbit of the $G_k$-invariant set $X_k^H$. So it follows from the above containment that for any $H \in S_{k,n}$ and any $x \in X_k^H$ there is $N \geq n$ with $x \in X_{k,n'}^H$ for all $n' \geq N$. Moreover, from the definitions we have that the sets $X_k^H$, $H \in T_k$, partition $X$, and for any $H \in T_k$ there is an $n$ with $H \in S_{k,n}$. In particular, for every $x \in X$ and $k \geq 1$, there is $N \geq k$ and $H \in S_{k,N}$ with $x \in X_{k,n}^H$ for all $n \geq N$.

We now introduce a function which plays an important role in the new version of clause (iv), to be introduced below. For $x \in X$ and $n \geq 1$ define
$$r_n(x) = \max \{k \: 1 \leq k \leq n \text{ and } \exists H \in S_{k,n} \text{ with } \phi_{k,n}^H(15 \ell_{k,n}^H q_{k,n} \cdot A_{k,n}^H) \cdot x \subseteq X_{k, n}^H\}.$$
We set $r_n(x) = 0$ if the above set is empty. Note that if $r_n(x) = k$ then there is $H \in S_{k,n}$ with $x \in X_{k,n}^H$, and furthermore, if there is $1 \leq k \leq n$ and $H \in S_{k,n}$ with $x \in X_{k,n}^H$ then by (e$'$) we have $r_{n+1}(x) \geq k$. So for each fixed $x \in X$, the values $r_n(x)$ are non-decreasing with $n$, and by the previous paragraph for every $k \geq 1$ there is an $n$ with $r_n(x) \geq k$.

Now we build a three-dimensional array of equivalence relations in order to diagonalize over both the $G_k$'s and the stabilizers appearing in each $\mathrm{Sub}(G_k)$. To describe displacement in the new third dimension we use the term ``layer.'' We will build a three-dimensional array of equivalence relations $\{E_{k,n}^m \: 1 \leq k \leq n \leq m\}$, where $E_{k,n}^m$ lies in row $k$, column $n$, and layer $m$, as shown in Figure \ref{FIG2}. We require these equivalence relations to satisfy the following:
\begin{enumerate}
\item[\rm (i)] each $E_{k,n}^m$ is a finite Borel equivalence relation contained in the $G_m$-orbit equivalence relation;
\item[\rm (ii)] if $E_{k,n}^m$ lies in row $k$ and column $n$, then it is $(\Phi_{k,n}^H, A_{k,n}^H, \epsilon_{k,n})$-sub-rectangular for every $H \in S_{k,n}$;
\item[\rm (iii)] if two equivalence relations $E_{k,n}^m$ and $E_{k,n}^t$ both lie in row $k$ and column $n$ then they are $(\Phi_{k,n}^H, q_{k,n} \cdot A_{k,n}^H)$-orthogonal for every $H \in S_{k,n}$;
\item[\rm (iv)] if $H \in S_{k,n}$, $x \in X_{k,n}^H$, and $r_n(x) = k$, then $\phi_{k,n}^H(\rZ_{k,n}^H) \cdot x \subseteq [x]_{E_{k,n}^m}$ implies $V_{k,n} \cdot x \subseteq [x]_{E_{s,t}^m}$ for all $1 \leq s \leq t < n$;
\item[\rm (v)] if $H \in S_{k,n}$ and $x \in X_{k,n}^H$ then there are at most $b_{k,n}$-many integers $t \geq n$ (equivalently, there are at most $b_{k,n}$-many equivalence relations in row $k$ and column $n$) such that $\phi_{k,n}^H(8 \cdot 2^{14 \ell_{k,n}^H} \cdot A_{k,n}^H) \cdot x \not\subseteq [x]_{E_{k,n}^t}$.
\end{enumerate}
We construct the three-dimensional array of equivalence relations inductively, one layer at a time. The first layer consists simply of $E_{1,1}^1$, which we now define. If there is no $H\in S_{1,1}$ for which $x\in X_1^H$, then declare the $E_{1,1}^1$-class of $x$ to consist simply of $x$ itself. We require that $E_{1,1}^1$ be contained in the $G_1$-orbit equivalence relation, and therefore it will suffice to define $E_{1,1}^1$ on each of the finitely many pairwise disjoint $G_1$-invariant Borel sets $X_1^H$, $H \in S_{1,1}$. For each $H \in S_{1,1}$ we use the chart $\Phi_{1,1}^H$ and apply Lemma \ref{LEM MAIN} to the action $G_1 \acts X_1^H$ with $s = 0$ (no previous sequence of equivalence relations) to obtain a $(\Phi_{1,1}^H, A_{1,1}^H, \epsilon_{1,1})$-rectangular equivalence relation on $X_1^H$, and we declare this equivalence relation to be the restriction of $E_{1,1}^1$ to $X_1^H$. This defines $E_{1,1}^1$. It is easy to check that (i) through (v) hold.

\begin{figure}
\setlength{\unitlength}{0.58cm}
\centering
\begin{picture}(22,14)
%%%%%%%% Layers
\put(0,1.5){\makebox{Layer $1$}}
\put(1,2.5){\makebox{Layer $2$}}
\put(2,3.5){\makebox{Layer $3$}}
\put(3,4.5){\makebox{Layer $4$}}
%%%%%%%% Rows
\put(20,3){\makebox{Row $1$}}
\put(20,6.5){\makebox{Row $2$}}
\put(20,10){\makebox{Row $3$}}
\put(20,13.5){\makebox{Row $4$}}
%%%%%%%% Columns
\put(3,0.5){\makebox{Column $1$}}
\put(8,0.5){\makebox{Column $2$}}
\put(13,0.5){\makebox{Column $3$}}
\put(18,0.5){\makebox{Column $4$}}
%%%%%%%%%%%%%%%%%%%%%%%%%%%%%%%%%%%%%%%%%%%%%%%%%%%%
%%%%%%%%%%%%%%%    Row   1    %%%%%%%%%%%%%%%%%%%%%%%
%%%%%% Column 1
\put(3,1.5){\makebox{$\displaystyle{E_{1,1}^1}$}}
\put(4,2.5){\makebox{$\displaystyle{E_{1,1}^2}$}}
\put(5,3.5){\makebox{$\displaystyle{E_{1,1}^3}$}}
\put(6,4.5){\makebox{$\displaystyle{E_{1,1}^4}$}}
%%%%%% Column 2
\put(8,2){\makebox{$\displaystyle{E_{1,2}^2}$}}
		\put(7.75,2){\vector(-4,1){2.5}}
\put(9,3){\makebox{$\displaystyle{E_{1,2}^3}$}}
		\put(8.75,3){\vector(-4,1){2.5}}
\put(10,4){\makebox{$\displaystyle{E_{1,2}^4}$}}
		\put(9.75,4){\vector(-4,1){2.5}}
%%%%%% Column 3
\put(13,2.5){\makebox{$\displaystyle{E_{1,3}^3}$}}
		\put(13,3){\vector(-1,1){3}}
\put(14,3.5){\makebox{$\displaystyle{E_{1,3}^4}$}}
		\put(14,4){\vector(-1,1){3}}
%%%%%% Column 4
\put(18,3){\makebox{$\displaystyle{E_{1,4}^4}$}}
		\put(18,3.5){\vector(-1,2){3.25}}
%%%%%%%%%%%%%%%%%%%%%%%%%%%%%%%%%%%%%%%%%%%%%%%%%%%%
%%%%%%%%%%%%%%%    Row   2    %%%%%%%%%%%%%%%%%%%%%%%
%%%%%% Column 2
\put(8,5.5){\makebox{$\displaystyle{E_{2,2}^2}$}}
		\put(8.5,5.25){\vector(0,-1){2.5}}
\put(9,6.5){\makebox{$\displaystyle{E_{2,2}^3}$}}
		\put(9.5,6.25){\vector(0,-1){2.5}}
\put(10,7.5){\makebox{$\displaystyle{E_{2,2}^4}$}}
		\put(10.5,7.25){\vector(0,-1){2.5}}
%%%%%% Column 3
\put(13,6){\makebox{$\displaystyle{E_{2,3}^3}$}}
		\put(13.5,5.75){\vector(0,-1){2.5}}
\put(14,7){\makebox{$\displaystyle{E_{2,3}^4}$}}
		\put(14.5,6.75){\vector(0,-1){2.5}}
%%%%%% Column 4
\put(18,6.5){\makebox{$\displaystyle{E_{2,4}^4}$}}
		\put(18.5,6.25){\vector(0,-1){2.5}}
%%%%%%%%%%%%%%%%%%%%%%%%%%%%%%%%%%%%%%%%%%%%%%%%%%%%
%%%%%%%%%%%%%%%    Row   3    %%%%%%%%%%%%%%%%%%%%%%%
%%%%%% Column 3
\put(13,9.5){\makebox{$\displaystyle{E_{3,3}^3}$}}
		\put(13.5,9.25){\vector(0,-1){2.5}}
\put(14,10.5){\makebox{$\displaystyle{E_{3,3}^4}$}}
		\put(14.5,10.25){\vector(0,-1){2.5}}
%%%%%% Column 4
\put(18,10){\makebox{$\displaystyle{E_{3,4}^4}$}}
		\put(18.5,9.75){\vector(0,-1){2.5}}
%%%%%%%%%%%%%%%%%%%%%%%%%%%%%%%%%%%%%%%%%%%%%%%%%%%%
%%%%%%%%%%%%%%%    Row   4    %%%%%%%%%%%%%%%%%%%%%%%
%%%%%% Column 4
\put(18,13.5){\makebox{$\displaystyle{E_{4,4}^4}$}}
		\put(18.5,13.25){\vector(0,-1){2.5}}
\end{picture}
\caption{The first four layers of the three-dimensional array of equivalence relations. Arrows indicate the order of construction. \label{FIG2}}
\end{figure}
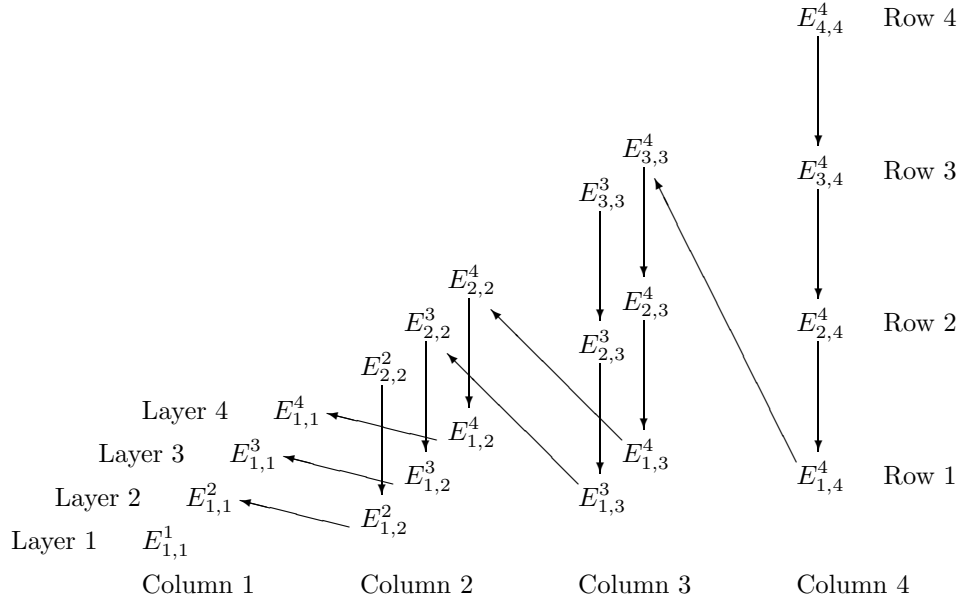

Now suppose that all equivalence relations on layers $1$ through $m-1$ have been constructed and satisfy clauses (i) through (v). We construct the equivalence relations in layer $m$: $\{E_{k,n}^m \: 1 \leq k \leq n \leq m\}$. We first define a whole layer of auxiliary equivalence relations $\{F_{k,n} \: 1 \leq k \leq n \leq m\}$. We will require the $F_{k,n}$'s to have the following properties:
\begin{enumerate}
\item[\rm (i$'$)] if $F_{k,n}$ lies in row $k$ then it is a finite Borel equivalence relation contained in the $G_k$-orbit equivalence relation;
\item[\rm (ii$'$)] if $F_{k,n}$ lies in row $k$ and column $n$, then for every $H \in S_{k,n}$ the restriction of $F_{k,n}$ to the $G_k$-invariant Borel set $X_k^H$ is $(\Phi_{k,n}^H, A_{k,n}^H, \epsilon_{k,n})$-rectangular;
\item[\rm (iii$'$)] if $F_{k,n}$ lies in row $k$ and column $n$, then for every $H \in S_{k,n}$, $F_{k,n}$ is $(\Phi_{k,n}^H, q_{k,n} \cdot A_{k,n}^H)$-orthogonal to each of the pre-existing equivalence relations in row $k$ and column $n$, namely $E_{k,n}^n, E_{k,n}^{n+1}, \ldots, E_{k,n}^{m-1}$.
\end{enumerate}
Fix $1 \leq k \leq n \leq m$. We will define $F_{k,n}$. If there is no $H\in S_{k,n}$ for which $x\in X_k^H$, then declare the $F_{k,n}$-class of $x$ to consist simply of $x$. Based on clause (i$'$) it will suffice to define $F_{k,n}$ on each of the finitely many pairwise disjoint $G_k$-invariant Borel sets $X_k^H$, $H \in S_{k,n}$. For each $H \in S_{k,n}$ we use the chart $\Phi_{k,n}^H$ and apply Lemma \ref{LEM MAIN} to the action $G_k \acts X_k^H$ with pre-existing equivalence relations $E_{k,n}^n, E_{k,n}^{n+1}, \ldots, E_{k,n}^{m-1}$ (note that this list is empty if $F_{k,n}$ lies in the right-most column or equivalently if $n = m$) to obtain a $(\Phi_{k,n}^H, A_{k,n}^H, \epsilon_{k,n})$-rectangular equivalence relation on $X_k^H$, and we declare this equivalence relation to be the restriction of $F_{k,n}$ to $X_k^H$. We remark that in applying Lemma \ref{LEM MAIN}, the bounding property of $b_{k,n}$ holds by the inductive assumption (v), and the other requirements of the lemma were verified immediately after the listing of clauses (a) through (e). Note that if the $F_{k,n}$-class of $x$ does not meet any of the sets $X_{k,n}^H$, $H \in S_{k,n}$, then the $F_{k,n}$-class of $x$ is a singleton. This defines the $F_{k,n}$'s.

Now we define the equivalence relations $E_{k,n}^m$ in layer $m$ in the order given by the arrows in Figure \ref{FIG2}. For $(k,n) \neq (m,m)$, fix a Borel selector $\sigma_{k,n} : X \rightarrow X$ for $F_{k,n}$ such that for all $x\in X$,
$$\sigma_{k,n}(x) \neq x \ \Longrightarrow \ (\exists H \in S_{k,n}) \ \sigma_{k,n}(x) \in X_{k,n}^H.$$
(It is clear that such a Borel selector exists; we are not using Lemma \ref{LEM SELECT} here). Set $E_{m,m}^m = F_{m,m}$. In general, if $E_{k+1,n}^m$ is defined then we let $E_{k,n}^m$ be the $F_{k,n}$-approximation to $E_{k+1,n}^m$ induced by $\sigma_{k,n}$. Specifically we define
$$x \ E_{k,n}^m \ y \ \Longleftrightarrow \ \sigma_{k,n}(x) \ E_{k+1,n}^m \ \sigma_{k,n}(y).$$
Similarly, whenever $E_{1,n+1}^m$ is defined we let $E_{n,n}^m$ be the $F_{n,n}$-approximation to $E_{1,n+1}^m$ induced by $\sigma_{n,n}$, meaning
$$x \ E_{n,n}^m \ y \ \Longleftrightarrow \ \sigma_{n,n}(x) \ E_{1,n+1}^m \ \sigma_{n,n}(y).$$
This defines the equivalence relations lying in layer $m$.

We check that clauses (i) through (v) continue to hold. By construction $E_{m,m}^m$ is contained in the $G_m$-orbit equivalence relation and each $\sigma_{k,n}$ moves points within their $G_m$-orbit. Therefore each $E_{k,n}^m$ is contained in the $G_m$-orbit equivalence relation. Also, $E_{m,m}^m$ is finite and Borel, and each map $\sigma_{k,n}$ is finite-to-one and Borel. So it follows that each $E_{k,n}^m$ is finite and Borel. Thus clause (i) holds. Clause (ii) holds since $E_{k,n}^m$ contains $F_{k,n}$ as a sub-equivalence relation, and $F_{k,n}$ is $(\Phi_{k,n}^H, A_{k,n}^H, \epsilon_{k,n})$-rectangular on $X_k^H$ for each $H \in S_{k,n}$. Similarly, clause (iii) holds since $E_{k,n}^m$ contains $F_{k,n}$ and for every $H \in S_{k,n}$ the equivalence relation $F_{k,n}$ is $(\Phi_{k,n}^H, q_{k,n} \cdot A_{k,n}^H)$-orthogonal to all pre-existing equivalence relations lying in row $k$ and column $n$. We prove that clauses (iv) and (v) hold in the next two paragraphs.

(iv). Let $1 \leq k \leq n \leq m$, let $H \in S_{k,n}$, let $x \in X_{k,n}^H$, and assume that $r_n(x) = k$. Suppose that
$$\phi_{k,n}^H(\rZ_{k,n}^H) \cdot x \ \subseteq \ [x]_{E_{k,n}^m}.$$
Fix $1 \leq s \leq t < n$ and fix any $y \in V_{k,n} \cdot x$. We must show that $x \ E_{s,t}^m \ y$. Set $y_{s,t} = y$. In general, if $y_{i,j}$ is defined and $i < j < n$ then set $y_{i+1,j} = \sigma_{i,j}(y_{i,j})$. If $y_{i,n}$ is defined and $i < k$ then set $y_{i+1,n} = \sigma_{i,n}(y_{i,n})$. If $y_{j,j}$ is defined and $j < n$ then set $y_{1,j+1} = \sigma_{j,j}(y_{j,j})$. This defines the collection of points
$$\{y_{i,t} \: s \leq i \leq t\} \cup \{y_{i,j} \: 1 \leq i \leq j \text{ and } t < j < n\} \cup \{y_{i,n} \: 1 \leq i \leq k\}.$$
If one imagines these points as lying in a two-dimensional traingular array in the natural way, then these points occupy the upper portion of column $t$, all columns strictly between $t$ and $n$, and the lower portion of column $n$. Our first goal is to show that
$$y_{k,n} \ \in \ \phi_{k,n}^H(\rZ_{k,n}^H) \cdot x.$$
In order to achieve this goal we must study which group element takes $y$ to $y_{k,n}$. For this it suffices to study which group element takes each $y_{i,j}$ to $y_{i+1,j}$ and which group element takes each $y_{j,j}$ to $y_{1,j+1}$. For $(i,j) \neq (k,n)$, we say that an \emph{event} occurs at $(i,j)$ if $\sigma_{i,j}(y_{i,j}) \neq y_{i,j}$. When $i < j$ this is equivalent to saying $y_{i+1,j} \neq y_{i,j}$, and when $i = j$ this is equivalent to saying $y_{1,i+1} \neq y_{i,i}$. In order to understand how one travels from $y$ to $y_{k,n}$, one only needs to understand what happens at the locations $(i,j)$ where an event occurs. If an event occurs at $(i,j)$ with $i < j$ then there must be $L \in S_{i,j}$ with $y_{i+1,j} = \sigma_{i,j}(y_{i,j}) \in X_{i,j}^L$ and hence
$$y_{i+1,j} \ \in \ ( \mathrm{Im}(\phi_{i,j}^L))^{-1} \cdot y_{i,j} \ \subseteq \ G_i \cdot y_{i,j}.$$
Similarly if an event occurs at $(i,j)$ with $i=j$ then there must be $L \in S_{i,i}$ with
$$y_{1,i+1} \ \in \ ( \mathrm{Im}(\phi_{i,i}^L))^{-1} \cdot y_{i,i} \ \subseteq \ G_i \cdot y_{i,i}.$$
We claim that no event occurs in any row higher than row $k$. If this claim holds then clause (d) will imply that $y_{k,n} \in V_{k,n} \cdot y$. To prove this claim, let $i$ be maximal such that there is some $j$ with an event occurring at $(i,j)$. Fix such a $j$. Note that if $j = n$ then by definition $i < k$ (we did not define, nor do we need, $y_{k+1,n}$). If $i \leq k$ then there is nothing to prove. So suppose that $i \geq k$, in which case $j < n$. By definition, either there is $L \in S_{i,i}$ with $y' := y_{1,i+1} \in X_{i,i}^L$ (if $i = j$) or there is $L \in S_{i,j}$ with $y' := y_{i+1,j} \in X_{i,j}^L$ (if $i < j$). In either case we have $L \in S_{i,n}$ since $S_{i,j} \subseteq S_{i,n}$. Since $j < n$ we deduce from clause (e$'$) that
$$\phi_{i,n}^L (16 \ell_{i,n}^L q_{i,n} \cdot A_{i,n}^L) \cdot y' \ \subseteq \ X_{i,n}^L.$$
Additionally, clause (d) and the maximality of $i$ implies that $y \in V_{i,n} \cdot y'$. Hence clause (b$'$) gives
$$x \ \in \ V_{k,n}^{-1} V_{i,n} \cdot y' \ \subseteq \ V_{i,n} V_{i,n} \cdot y' \ \subseteq \ \phi_{i,n}^L(\rZ_{i,n}^L) \cdot y' \subseteq X_{i,n}^L.$$
Therefore
$$\phi_{i,n}^L (15 \ell_{i,n}^L q_{i,n} \cdot A_{i,n}^L) \cdot x \ \subseteq \ \phi_{i,n}^L(16 \ell_{i,n}^L q_{i,n} \cdot A_{i,n}^L) \cdot y' \ \subseteq \ X_{i,n}^L,$$
which implies that $i \leq r_n(x) = k$. So $i \leq k$ as claimed. It follows that
$$y_{k,n} \ \in \ V_{k,n} \cdot y \ \subseteq \ V_{k,n} V_{k,n} \cdot x.$$
We are assuming $x \in X_{k,n}^H$ and thus clause (b$'$) gives
$$y_{k,n} \ \in \ \phi_{k,n}^H(\rZ_{k,n}^H) \cdot x.$$
Since, like $y$, $x \in V_{k,n} \cdot x$, an identical argument shows that $x_{k,n}$ (which is defined similarly to $y_{k,n}$) satisfies
$$x_{k,n} \ \in \ \phi_{k,n}^H(\rZ_{k,n}^H) \cdot x.$$
It follows from the definitions of the various equivalence relations in layer $m$ that
$$y \ E_{s,t}^m \ x \ \Longleftrightarrow \ y_{k,n} \ E_{k,n}^m \ x_{k,n}.$$
We conclude that $y \ E_{s,t}^m \ x$ since $\phi_{k,n}^H (\rZ_{k,n}^H) \cdot x \subseteq [x]_{E_{k,n}^m}$.

(v). Fix $1 \leq k \leq n \leq m$. If $n = m$ then there is currently only one equivalence relation, $E_{k,m}^m$, lying in row $k$ and column $n = m$. In this case clause (v) is automatic since $b_{k,n} \geq 1$. So suppose that $n < m$. Fix $H \in S_{k,n}$ and $x \in X_{k,n}^H$. Let $J$ be the set of $n < j \leq m$ such that
$$\phi_{k,n}^H(8 \cdot 2^{14 \ell_{k,n}^H} \cdot A_{k,n}^H) \cdot x \ \not\subseteq \ [x]_{E_{k,n}^j}.$$
Note that we exclude $n$ from $J$ and therefore must only show that $|J| \leq b_{k,n} - 1$. Set $t = r_{n+1}(x)$. Then $k \leq t \leq n+1$ since $x \in X_{k,n}^H$. By definition there is $L \in S_{t,n+1}$ such that
$$\phi_{t,n+1}^L(15 \ell_{t,n+1}^L q_{t,n+1} \cdot A_{t,n+1}^L) \cdot x \ \subseteq \ X_{t, n+1}^L.$$
Note that by the definition of $V_{t,n+1}$,
$$\phi_{k,n}^H(8 \cdot 2^{14 \ell_{k,n}^H} \cdot A_{k,n}^H) \ \subseteq \ V_{t,n+1}.$$
Therefore clause (iv) implies that for every $j \in J$,
$$\phi_{t,n+1}^L(\rZ_{t,n+1}^L) \cdot x \ \not\subseteq \ [x]_{E_{t,n+1}^j}.$$
Lemma \ref{LEM STRONG BND} now implies that for each $j\in J$ there is $1 \leq i \leq \ell_{t,n+1}^L \leq \ell_t$ such that
$$x \ \in \ \phi_{t,n+1}^L\left( 30 \ell_{t,n+1}^L \cdot (q_{t,n+1} \cdot A_{t,n+1}^L)\right) \cdot \partial_i^{\Phi_{t,n+1}^L}\left(E_{t,n+1}^j, q_{t,n+1} \cdot A_{t,n+1}^L\right).$$
Clause (iii) states that the equivalence relations lying in row $t$ and column $n+1$ are pairwise $(\Phi_{t,n+1}^L, q_{t,n+1} \cdot A_{t,n+1}^L)$-orthogonal. Thus, since $x$, $t$, and $L$ are fixed, we must have that $|J| \leq \ell_t \leq b_{k,n} - 1$. This completes the construction of the three-dimensional array of equivalence relations.

Now to complete the proof we argue that for all $x, y \in X$,
$$x\mathrel{E^X_G}y \ \Longleftrightarrow \ (\exists M) \, (\forall m \geq M) \ x \ E_{1,1}^m \ y.$$
Establishing this fact will indeed complete the proof since the relation described on the right is clearly hyperfinite by clause (i). Clause (i) implies that the right-hand side implies the left. So fix $x, y \in X$ with $x\mathrel{E^X_G}y$. Since $G$ is the union of the $G_k$'s, we can find $k$ with $G_k \cdot x = G_k \cdot y$. Let $H$ be the unique element of $T_k$ which is $G_k$-conjugate to $\Stab(x) \cap G_k$. Then $x \in X_k^H$ and from our discussion just after the listing of clauses (b$'$) and (e$'$) we have that there is $n(1)$ with $H \in S_{k,n}$ and $x \in X_{k,n}^H$ for all $n \geq n(1)$. By our comment following the definition of the function $r_n$, we have that $r_n(x) \geq k$ for all $n > n(1)$. Since $G_k = \bigcup_{n \geq k} V_{k,n}$, there is $n(2) > n(1)$ with $y \in V_{k,n} \cdot x$ for all $n \geq n(2)$. Fix any $n \geq n(2)$ and set $t = r_n(x) \geq k$. Let $L \in S_{t,n}$ be such that
$$\phi_{t,n}^L(15 \ell_{t,n}^L q_{t,n} \cdot A_{t,n}^L) \cdot x \ \subseteq \ X_{t,n}^L.$$
If $m \geq n$ and $\phi_{t,n}^L(\rZ_{t,n}^L) \cdot x \not\subseteq [x]_{E_{t,n}^m}$ then
$$x \ \in \ \phi_{t,n}^L\left(30 \ell_{t,n}^L \cdot (q_{t,n} \cdot A_{t,n}^L)\right) \cdot \partial_i^{\Phi_{t,n}^L}\left(E_{t,n}^m, q_{t,n} \cdot A_{t,n}^L\right)$$
for some $1 \leq i \leq \ell_{t,n}^L$ by Lemma \ref{LEM STRONG BND}. By clause (iii) the equivalence relations $E_{t,n}^m$ lying in row $t$ and column $n$ are pairwise $(\Phi_{t,n}^L, q_{t,n} \cdot A_{t,n}^L)$-orthogonal, and so the above scenario can occur at most $\ell_{t,n}^L$-many times. Thus
$$\phi_{t,n}^L(\rZ_{t,n}^L) \cdot x \ \subseteq \ [x]_{E_{t,n}^m}$$
for all but finitely many $m \geq n$. Now clause (iv) implies that
$$V_{t,n} \cdot x \ \subseteq \ [x]_{E_{1,1}^m}$$
for all but finitely many $m \geq n$. This completes the proof as $y \in V_{k,n} \cdot x$ and $V_{k,n} \subseteq V_{t,n}$ since $t \geq k$.
\end{proof}

\thebibliography{999999}

\bibitem[DJK]{DJK}
R.\ Dougherty, S.\ Jackson, and A.\ S.\ Kechris, \emph{The structure of hyperfinite Borel equivalence relations},
Transactions of the American Mathematical Society 341 (1994), No.\ 1, 1835--1844.

\bibitem[FM]{FM}
J.\ Feldman and C.\ C.\ Moore, \emph{Ergodic equivalence relations, cohomology and von Neumann algebras, I.}, Transactions of the American Mathematical Society 234 (1977), 289--324.

\bibitem[GJ]{GJ}
S. Gao and S. Jackson,
\textit{Countable abelian group actions and hyperfinite equivalence relations}. To appear in Inventiones Mathematicae.

\bibitem[GMPS]{GMPS}
T. Giordano, H. Matui, I.F. Putnam, and C.F. Skau,
\textit{Orbit equivalence for Cantor minimal $\Z^d$-systems}, Inventiones Mathematicae 179 (2010), 119--158.

\bibitem[G]{G}
M. Gromov,
\textit{Groups of polynomial growth and expanding maps}, Publications Math\'{e}matiques de l'IH\'{E}S 53 (1981), 53--73.

\bibitem[HKL]{HKL}
L.\ A.\ Harrington, A.\ S.\ Kechris, and A.\ Louveau, \emph{A Glimm-Effros dichotomy for Borel equivalence relations}, Journal of the American Mathematical Society 3 (1990), No.\ 4, 903--928.

\bibitem[JKL]{JKL}
S.\ Jackson, A.S.\ Kechris, and A.\ Louveau, \emph{Countable Borel equivalence relations}, Journal of Mathematical Logic 2 (2002), No.\ 1, 1--80.

\bibitem[OW]{OW80}
D.\ Ornstein and B.\ Weiss, \emph{Ergodic theory of amenable group actions I. The Rohlin lemma}, Bulletin of the American Mathematical Society 2 (1980), 161--164.

\bibitem[R]{R}
D. Robinson,
A Course in the Theory of Groups. Second edition. Spring-Verlag, New York, 1996.

\bibitem[SS]{SS88}
T.\ Slaman and J.\ Steel, \emph{Definable functions on degrees}, Cabal Seminar 81--85, 37--55. Lecture Notes in Mathematics 1333. Springer-Verlag, 1988.

\bibitem[W]{Weiss84}
B.\ Weiss, \emph{Measurable dynamics}, Conference in Modern Analysis and Probability (R.\ Beals et.\ al.\ eds.), 395--421. Contemporary Mathematics 26 (1984), American Mathematical Society, 1984.

\end{document}